\title{Space-time fractional NLS equation}
\author{Ricardo Grande}
\address{Massachusetts Institute of Technology, Department of Mathematics}
\email{rgi@mit.edu}
\newcommand{\norm}[1]{\left\lVert#1\right\rVert}
\def \RR{\mathbb R}
\def \CC{\mathbb C}
\numberwithin{equation}{section}
\theoremstyle{plain}
\newtheorem{theorem}{Theorem}[section]
\newtheorem{proposition}{Proposition}[section]
\newtheorem{lemma}{Lemma}[section]
\theoremstyle{remark}
\newtheorem{rem}{Remark}[section]
\begin{document}

\begin{abstract}
 In this paper we prove local well-posedness of a space-time fractional generalization of the nonlinear Schr\"odinger equation with a power-type nonlinearity. The linear equation coincides with a model proposed by Naber, and displays a nonlocal behavior both in space and time which accounts for long-range interactions and a so-called memory effect. Because of a loss of derivatives produced by the latter and the lack of semigroup structure of the solution operator, we employ a strategy of proof based on exploiting some smoothing effect similar to that used by Kenig, Ponce and Vega for the KdV equation. Finally, we prove analytic ill-posedness of the data-to-solution map in the supercritical case.
\end{abstract}

\maketitle

\section{Introduction}

\subsection{Space-time fractional NLS}

In this article we study the initial value problem 
\begin{equation}\label{linear}
\left\{ \begin{array}{ll}
i^{\beta} \partial_t^{\beta} u & = (-\Delta_x)^{\alpha /2}\, u + g(u) \qquad (t,x)\in (0,\infty)\times\RR , \\
u\mid_{t=0} & = f \in H^s (\RR).
\end{array}\right.
\end{equation}
for $0<\alpha <2$ and $0<\beta <1$. 
We consider nonlinearities of polynomial type $g(u)=\mu |u|^{p-1} u$ for an odd integer $p$ and $\mu=\pm 1$. The operator $(-\Delta_x)^{\alpha /2}$, known as the fractional Laplacian, is given by the Fourier multiplier of symbol $|\xi |^{\alpha}$, and the operator with symbol $\partial_t^{\beta}$ is the Caputo fractional derivative, given by 
\[\partial^{\beta}_t u (t,x)=\frac{1}{\Gamma (1-\beta)} \int_0^{t} \frac{\partial_{\tau}u(\tau,x)}{(t-\tau)^{\beta}} \, d\tau .\]

The case $\alpha=2$ and $\beta=1$ is simply the nonlinear Schr\"odinger equation, which has been extensively studied, see the books \cite{caze} and \cite{tao} for some results and a list of contributing authors.

Laskin proposed equation \cref{linear} in the case $\beta=1$ (i.e. classical derivative in time) as a fundamental equation in fractional quantum mechanics, explaining that ``if the [Feynmann] path integral over Brownian trajectories leads to the well known Schr\"odinger equation, then the path integral over L\'evy trajectories leads to the fractional Schr\"odinger equation'' \cite{Laskin}. Later, Naber proposed the time-fractional case arguing that the process could be further generalized to be non-Markovian at all, thus giving rise to a memory effect \cite{Naber}.

As explained in \cite{prob}, if a Cauchy problem $\partial_t u=Au$ admits a stochastic solution $X(t)$, in the sense that the probability density of such a stochastic process solve the Cauchy problem, then under some technical conditions, $\partial^{\beta}_t u=Au$ admits $X(E_t)$ as a solution, where $E_t = \inf\{ x > 0 \mid D_x > t \}$ is the inverse hitting time of a L\'evy process $D_x$ whose probability density function has $e^{-s^{\beta}}$ as its Laplace transform. In fact, stochastic solutions to some fractional Cauchy problems arise as scaling limits of continuous time random walks whose i.i.d. jumps are separated by i.i.d. waiting times, where the probability of waiting longer than time $t>0$ decays like $t^{-\beta}$ for large $t$, see \cite{prob}.

The one-dimensional space-fractional cubic equation \cref{linear}, with $\beta=1$ and $1<\alpha<2$, was first studied in \cite{cubic}. This work was later generalized to every dimension and any power-type nonlinearity in \cite{HS}, where Hong and Sire developed a general local and global well-posedness theory for the space-fractional equation provided the regularity $s$ is greater or equal than the regularity invariant under the Galilean transformation, $s_g=\frac{1}{2}-\frac{\alpha}{4}$, and the regularity invariant under scaling, $s_c=\frac{1}{2}-\frac{\alpha}{p-1}$. Their work was further extended in \cite{toulouse} (see also a series of papers about blow-up \cite{toulouse2,toulouse3}).

The combined space-time fractional linear equation with a potential has also been studied recently, see for instance \cite{dx}. A different type of linear space-time fractional equation was proposed and studied in \cite{french}. However one should note that their definition of the nonlocal time derivative, although allowing them to keep some group property on the solution operator, does not agree with what Naber proposed in \cite{Naber}. 

Instead we propose \cref{linear} as a generalization of the nonlinear Schr\"odinger equation whose linear part agrees with Naber's work.
The coefficient $i^{\beta}$, as opposed to $i$, has been a matter of discussion. Naber argues in favor of $i^{\beta}$, and among other reasons, he explains that after taking the Laplace transform in time and Fourier transform in space, the choice of $i^{\beta}$ produces a movement of the pole of the solution along the imaginary axis as $\beta$ ranges between 0 and 1. However, if one instead chooses $i$, the pole would move to almost any desired location in the complex plane. Physically, this would mean that a small change in the order of the time derivative could change the temporal behavior from sinusoidal to growth or to decay. Moreover, we would add that if one explores the possibility of extending this work allowing exponents $\beta$ to vary in the range $(1,2)$, the choice of $i^{\beta}$ provides equations which interpolate between the Schr\"odinger and wave equations, allowing one to remain within the domain of dispersive equations.

Recently, the case of the coefficient $i$ instead of $i^{\beta}$ has been studied in \cite{c2,c1}. See \cref{cpapers} for a more detailed discussion about this work.

\subsection{Background about fractional equations}

Classical equations such as the Laplace, heat, wave and even Schr\"odinger equations admit a generalization with a fractional Laplacian replacing the Laplacian, most commonly defined as a Fourier multiplier:
\[\widehat{(-\Delta)^sf}=|\xi |^{2s} \widehat{f}(\xi ) \  \mbox{for}\ s >0.\]
We will refer to such equations as space-fractional equations. 

The case of the space-fractional Laplace equation is well-known, see for instance Stein's book \cite{Stein} for an approach based on the use of Riesz potentials, or \cite{Nicola} for a recent survey. The case of the space-fractional porous medium equation, with a fractional Laplacian and a classical time derivative, has also been studied in a series of papers \cite{PME1,PME2}.

A different approach in the generalization of such classical equations is the substitution of the time derivative for its fractional counterpart. This is a concept that mainly comes from applications, as there are evolution processes whose modelling requires to take into account the past, thus exhibiting a nonlocal behavior in time. There are different ways of making sense of such a fractional derivative in time, but one of the most common ones is to use the Caputo derivative:
\[\partial^{\beta}_t f (t)=\frac{1}{\Gamma (1-\beta)} \int_0^{t} \frac{f'(\tau)}{(t-\tau)^{\beta}} \, d\tau ,\]
for $0<\beta<1$, where $\Gamma$ is the Gamma function.  

Such an approach has been taken by Allen, Caffarelli and Vasseur when considering a space-time fractional porous medium equation \cite{fracPME1,fracPME2}.
These types of problems involving fractional time derivatives have also been studied using probabilistic techniques, for example in \cite{prob}, where the authors develop stochastic solutions to Cauchy problems of type $\partial^{\beta}_t u=A u$ on bounded domains via solutions to the classical Cauchy problem $\partial_t u=A u$.

Fractional derivatives have been used to model some phenomena in Physics, such as non-diffusive transport in  plasma  turbulence \cite{Negrete1}, and in Economics, such as ruin theory of insurance companies, growth and inequality processes, and high-frequency price fluctuation in financial markets \cite{econ}.

\subsection{Background and properties of fractional NLS}

Consider the linear case of equation \cref{linear}, i.e. $g=0$. One may take the Laplace transform in time, the Fourier transform in space, and solve the resulting equation to formally find 
\begin{equation}\label{Lsolution}
u(t,x)=\int_{\RR} e^{ix\cdot \xi} \, \widehat{f}(\xi) \, \left[ \sum_{k=0}^{\infty} \frac{ t^{\beta\, k} |\xi|^{\alpha\, k} i^{- \beta\, k}}{\Gamma(\beta\, k +1)}\right] \, d\xi .
\end{equation}
The power series comes from the {\it Mittag-Leffler function} $E_{\beta}(|\xi |^{\alpha} t^{\beta} i^{-\beta})$, where
\begin{equation}\label{ML}
E_{\beta}(z)=\sum_{k=0}^{\infty} \frac{z^{k}}{\Gamma(\beta k +1)} ,
\end{equation}
which is an entire function in the complex plane. More details about this derivation can be found in \cite{kai} and \cref{sec: appA}.

We can also perform some scaling analysis for equation \cref{linear}. If $u$ is a solution to the equation then so is
\[u_{\lambda}(t,x)=\lambda^{\frac{\alpha\beta}{p-1}}\, u(\lambda^{\alpha} t,\lambda^{\beta} x) ,\]
with the obvious rescaling of the initial data. Then one can quickly check that the critical regularity invariant under scaling is
\[s_c=\frac{1}{2}-\frac{\alpha}{p-1}\ .\]
Note that $\beta$ plays no role on this formula, and so it coincides with the space-fractional case treated in \cite{HS} for dimension 1.

An interesting feature is that even the linear equation has no conserved quantities. However, one can use the following asymptotic expansion for the Mittag-Leffler function, which may be found in Chapter 18 of \cite{bate},
\begin{equation}\label{Lasymp}
E_{\beta}(z) = \frac{1}{\beta} \exp (z^{\frac{1}{\beta}}) - \sum_{k=1}^{N-1} \frac{z^{-k}}{\Gamma (1-\beta k)}+ O(|z|^{-N})\qquad \mbox{as}\ |z|\rightarrow\infty ,
\end{equation}
to control $\norm{u(t)}_{L^2_x} \leq C$ uniformly in time. In fact, one can even show that 
\[\lim_{t\rightarrow\infty} \norm{u(t)}_{L^2_x}=\frac{1}{\beta}\norm{f}_{L^2_x} ,\]
i.e. as time passes the mass grows towards $\frac{1}{\beta}$ times that of the initial data. 
Formula \cref{Lasymp} is valid when $|\arg (z)|\leq \frac{\pi}{2}\beta$ and for any integer $N\geq 2$.
In fact, we will always choose the branch of the complex logarithm for which $|\arg (z) |<\pi$, so that $i^{-\beta}=e^{-i\beta\frac{\pi}{2}}$ and $i^{\beta}=e^{i\beta\frac{\pi}{2}}$.

By using a fractional generalization of the Duhamel formula, we can write the solution of the nonlinear problem \cref{linear} as 
\begin{align}\label{inteq}
u(t,x) = &\int_{\RR} e^{ix\cdot \xi} \, \widehat{f}(\xi) \, E_{\beta}(|\xi |^{\alpha} t^{\beta} i^{-\beta}) \, d\xi \\
& + i^{-\beta} \, \int_{\RR} \int_0^{t} \widehat{g}(\tau , \xi) \, (t-\tau )^{\beta -1} E_{\beta , \beta }(i^{-\beta} (t-\tau )^{\beta} |\xi |^{\alpha}) \, e^{ix\cdot \xi}\, d\tau d\xi  ,\nonumber
\end{align}
where 
\[\widehat{g}(t,\xi)= \int_{\RR} g(u(t,x)) \, e^{-i \xi \cdot x} \, dx ,\]
and 
\begin{equation}\label{genML}
E_{\beta , \beta}(z)= \sum_{k=0}^{\infty} \frac{z^k}{\Gamma (\beta k +\beta )}.
\end{equation}
This is known as the {\it generalized Mittag-Leffler function}, and is an entire function. As before, more details about this can be found both in \cref{sec: appA} and \cite{kai}.

Because of the difficulties in dealing with this function directly, the following asymptotic formula is helpful, which may also be found in Chapter 18 of \cite{bate},
\begin{align}\label{NLasymp}
t^{\beta -1} E_{\beta, \beta}(i^{-\beta} t^{\beta} |\xi |^{\alpha}) = & \frac{1}{\beta} i^{\beta -1} |\xi |^{\sigma - \alpha} \, e^{-i t |\xi |^{\sigma}} \\
& - \sum_{k=2}^{N} \frac{\Gamma (\beta -\beta k )^{-1} \, i^{\beta k}}{t^{1 + \beta (k-1)} |\xi |^{\alpha k}}+O \left(\frac{1}{t^{1 + \beta N} |\xi |^{\alpha (N+1)}}\right)\nonumber
\end{align}
and is valid as $t |\xi |^{\sigma} \rightarrow \infty$, and for any $N\geq 2$, where $\sigma=\frac{\alpha}{\beta}$. We again observe an oscillatory part and a monotonous part, which will need to be managed separately.

\begin{rem}
As seen in \cref{Lsolution,NLasymp}, the solution operator does not enjoy the usual group or even semigroup property with respect to time. This is a major obstacle that prevents us from using Strichartz estimates and other techniques that normally rely on this fact. As a result, our treatment is substantially different from the techniques used both for classical NLS and for the purely space-fractional equation in \cite{HS}.
\end{rem}

\subsection{Statement of results}

Using the notation presented in Subsection 1.5, we now present a local well-posedness result of equation \cref{linear} for a special choice of parameters.

\begin{theorem}\label{maincor} Consider the space-time fractional nonlinear Schr\"odinger initial value problem:
\begin{equation}\label{parteq}
\left\{ \begin{array}{ll}
i^{\frac{7}{8}} \partial_t^{\frac{7}{8}} u & = (-\Delta_x)^{\frac{7}{8}}\, u + \mu |u|^2 u \qquad (t,x)\in (0,\infty )\times\RR , \\
u\mid_{t=0} & = f ,
\end{array}\right.
\end{equation}
Then for every $f\in H^{\frac{1}{4}}(\RR)$ there exists $T=T(\norm{f}_{H^{1/4}(\RR)})>0$ (with $T(\rho )\rightarrow\infty$ as $\rho\rightarrow 0$) and a unique solution $u(t,x)$ to the associated integral equation given by \cref{inteq} satisfying
\begin{equation}\label{partclass1}
u\in C([0,T],H^{\frac{1}{4}} (\RR)),
\end{equation}
\begin{equation}\label{partclass2}
\norm{\langle \nabla \rangle^{\frac{3}{4}-} u }_{L^{\infty}_x L^2_T} < \infty ,
\end{equation}
and 
\begin{equation}\label{partclass3}
\norm{u}_{L^{4}_x L^{\infty}_T} <\infty .
\end{equation}
Moreover, for any $T'\in (0,T)$ there exists a neighborhood $V$ of $f$ in $H^{\frac{1}{4}} (\RR )$ such that the map $\tilde{f} \rightarrow \tilde{u}$ from $V$ into the class defined by \cref{partclass1,partclass2,partclass3} with $T'$ instead of $T$ is Lipschitz.
\end{theorem}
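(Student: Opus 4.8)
Since $(-\Delta_x)^{7/8}$ has Fourier symbol $|\xi|^{7/4}$, the problem \cref{parteq} is the instance of \cref{linear} with $\alpha=\tfrac74$, $\beta=\tfrac78$, $p=3$, and in particular $\sigma=\alpha/\beta=2$, so the oscillatory factor $e^{-it|\xi|^{\sigma}}$ appearing in \cref{Lasymp} and \cref{NLasymp} is Schr\"odinger-like. The plan is to solve \cref{parteq} by a contraction argument for the map $\Phi_f$ given by the right-hand side of \cref{inteq} with $g(v)=\mu|v|^{2}v$, on a ball of the space $X_T$ of functions satisfying \cref{partclass1,partclass2,partclass3} on $[0,T]$ --- i.e.\ with norm $\norm{v}_{X_T}=\norm{v}_{C([0,T];H^{1/4})}+\norm{\langle\nabla\rangle^{3/4-\varepsilon}v}_{L^\infty_xL^2_T}+\norm{v}_{L^4_xL^\infty_T}$ for a fixed small $\varepsilon>0$ --- equipped with the metric $d(v,w)=\norm{v-w}_{X_T}$. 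The first step is to split, using \cref{Lasymp} and \cref{NLasymp}, the two Mittag--Leffler multipliers in \cref{inteq}: on the region $t|\xi|^{2}\gtrsim1$ each of them decomposes into an \emph{oscillatory} part $\propto e^{-it|\xi|^{2}}$ --- for the Duhamel kernel carrying in addition the factor $|\xi|^{\sigma-\alpha}=|\xi|^{1/4}$, i.e.\ a loss of $\tfrac14$ derivative, which is the price of the memory effect --- and a \emph{monotone} part which is a finite sum of terms decaying in $t$ and \emph{gaining} $\xi$-derivatives, plus a remainder of the same kind; on the complementary region $t|\xi|^{2}\lesssim1$ one simply uses boundedness of the entire functions $E_\beta,E_{\beta,\beta}$ on the ray $\arg z=-\tfrac{\pi}{2}\beta$, so that contribution is harmless.

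For the oscillatory parts I would prove, in the spirit of Kenig--Ponce--Vega's treatment of Schr\"odinger: (i) the sharp local smoothing estimate, so that the oscillatory linear propagator sends $H^{1/4}$ into $\{v:\langle\nabla\rangle^{3/4-\varepsilon}v\in L^\infty_xL^2_T\}$ (the $\tfrac12$-gain of the $\sigma=2$ dispersion, with $\varepsilon$ absorbing the inhomogeneous-versus-homogeneous and endpoint technicalities); (ii) the global maximal-function estimate $\norm{(\text{osc.\ lin.})}_{L^4_xL^\infty_T}\lesssim\norm{f}_{H^{1/4}}$; and (iii) continuity in time, $(\text{osc.\ lin.})\in C([0,T];H^{1/4})$ with norm $\lesssim\norm{f}_{H^{1/4}}$. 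I would then transfer (i)--(iii) to the retarded (Duhamel) oscillatory operator by a Christ--Kiselev argument --- which is where the absence of a semigroup law bites, since the usual retarded-estimate shortcuts are unavailable --- keeping track of the $\tfrac14$-derivative loss from $|\xi|^{\sigma-\alpha}$; together with the $\tfrac12$-gain of the dual smoothing, this bounds the Duhamel term at regularity $\tfrac34-\varepsilon$ by (a positive power of $T$ times) $\norm{\langle\nabla\rangle^{1/2-\varepsilon}g}_{L^1_xL^2_T}$, which is the budget one has to respect. The monotone and low-frequency pieces are estimated by brute force: they gain (or, at worst, do not lose) $\xi$-derivatives and their $\tau$-integrals run over $[0,t]\subseteq[0,T]$, so each of them comes with a factor $T^{\theta}$, $\theta>0$.

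With the linear estimates in hand, the scheme closes once one shows $\norm{|v|^{2}v}\lesssim T^{\theta}\norm{v}_{X_T}^{3}$ and $\norm{|v|^{2}v-|w|^{2}w}\lesssim T^{\theta}(\norm{v}_{X_T}^{2}+\norm{w}_{X_T}^{2})\norm{v-w}_{X_T}$ in the dual norms dictated above. This follows from the fractional Leibniz rule --- putting all of the at most $\tfrac12-\varepsilon$ derivatives on a single factor, which is legitimate since $\tfrac12-\varepsilon<\tfrac34-\varepsilon$ --- and H\"older in the mixed $L^q_xL^r_T$ norms, placing the two rough factors in $L^4_xL^\infty_T$ (or interpolants of it with the smoothing norm) and the smooth factor in the smoothing norm; the persistence of the $C_TH^{1/4}$ part is handled by (iii) together with the elementary bound $\norm{v}_{L^\infty_xL^2_T}\lesssim\norm{\langle\nabla\rangle^{3/4-\varepsilon}v}_{L^\infty_xL^2_T}$. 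The three norms \cref{partclass1,partclass2,partclass3} are tailored precisely so that these estimates close at $s=\tfrac14$ for $(\alpha,\beta,p)=(\tfrac74,\tfrac78,3)$. A standard fixed point argument on the ball of radius $\sim\norm{f}_{H^{1/4}}$ then produces, for $T=T(\norm{f}_{H^{1/4}})$ small, a unique $u\in X_T$ with $\Phi_f(u)=u$, which solves \cref{inteq}; since the nonlinear gain scales like $T^{\theta}\norm{f}_{H^{1/4}}^{2}$ (reflecting $s=\tfrac14>s_c=-\tfrac38$), one may arrange $T(\rho)\to\infty$ as $\rho\to0$. Finally, the same difference estimates, run with $T'<T$ and data $\tilde f$ in a small $H^{1/4}$-ball around $f$ (so that $T'<T\le T(\norm{\tilde f}_{H^{1/4}})$), give the asserted Lipschitz dependence of $\tilde f\mapsto\tilde u$ into the class of \cref{partclass1,partclass2,partclass3} with $T'$, and \cref{partclass1} is built into $X_T$ by construction.

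The main obstacle is the linear layer: establishing the local smoothing and maximal-function estimates for a solution operator that is \emph{not} a semigroup --- exactly the difficulty flagged in the remark following \cref{NLasymp} --- so that one must argue directly from the Mittag--Leffler multipliers and their oscillatory/monotone splitting, and invoke Christ--Kiselev for the Duhamel term rather than the usual transfer arguments. The second delicate point is the accounting showing that the $\tfrac14$-derivative loss produced by the Caputo derivative, set against the $\tfrac12$-gain of the $\sigma=2$ smoothing, still leaves enough room to close the cubic estimate at $s=\tfrac14$ (this is what pins the result to this particular triple $(\alpha,\beta,p)$ and to these three mixed norms), together with checking that every monotone, error and low-frequency term genuinely carries a positive power of $T$, so that the map is a bona fide contraction for short times.
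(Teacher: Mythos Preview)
Your plan matches the paper's: same contraction space and three norms, same oscillatory/monotone splitting of the Mittag--Leffler symbols, same Kenig--Ponce--Vega smoothing and maximal-function inputs. Two technical points differ from the paper and one of them is a small error.

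First, you do not need Christ--Kiselev, and your stated reason for it is off. After the splitting, the oscillatory piece $e^{-it|\nabla|^{2}}$ \emph{is} a genuine unitary group, so the paper transfers the smoothing estimate to the retarded operator by straight duality (write $\norm{\cdot}_{L^\infty_xL^2_T}$ as a pairing against $L^1_xL^2_T$, swap the $t,t'$ integrals, and use unitarity plus the dual smoothing bound), landing the nonlinearity in $L^{1}_{T}L^{2}_{x}$ --- not $L^{1}_{x}L^{2}_{T}$ as you write. On $L^{1}_{x}L^{2}_{T}$ Christ--Kiselev would in fact fail, since the input and output time exponents are both $2$; with $L^{1}_{T}L^{2}_{x}$ it would work but is unnecessary. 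The monotone and low-frequency Duhamel pieces are handled by Minkowski and pointwise multiplier bounds, where no transfer lemma enters at all. In short, the ``no semigroup'' difficulty is dissolved by the decomposition itself, not circumvented by Christ--Kiselev.

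Second, the paper avoids the fractional Leibniz rule: since $p=3$ is odd, $|v|^{2}v=v^{2}\bar v$ is a polynomial in $v,\bar v$, so a Plancherel-side convolution argument puts all of $\langle\nabla\rangle^{1/2}$ on a single factor elementarily, giving $\norm{\langle\nabla\rangle^{1/2}(|v|^{2}v)}_{L^{2}_{T,x}}\lesssim\norm{\langle\nabla\rangle^{1/2}v}_{L^{\infty}_{x}L^{2}_{T}}\norm{v}_{L^{4}_{x}L^{\infty}_{T}}^{2}\le\eta_{1}(v)\,\eta_{3}(v)^{2}$. Your Leibniz route would also close, but is heavier than needed here.
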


The techniques used in the proof of \cref{maincor} can be modified to prove the following more general result.

\begin{theorem}\label{maintheorem} Consider the space-time fractional nonlinear Schr\"odinger initial value problem \cref{linear} with a nonlinearity $g(u)=\mu |u|^{p-1} u$ for some odd integer $p\geq 3$, $\mu=\pm 1$,  $\alpha>0$ and $\beta\in (0,1)$. With $\sigma=\frac{\alpha}{\beta}$, suppose that 
\begin{equation}\label{parameterconditions}
\alpha > \frac{\sigma +1}{2}, \quad s\geq \frac{1}{2}-\frac{1}{2(p-1)},\quad \mbox{and}\quad \delta\in \left[s+\sigma-\alpha, \frac{\sigma}{2}-\frac{1}{2(p-1)}\right).
\end{equation}
for some $s\in\RR$. Then for every $f\in H^s(\RR)$ there exists $T=T(\norm{f}_{H^s(\RR)})>0$ (with $T(\rho )\rightarrow\infty$ as $\rho\rightarrow 0$) and a unique solution $u(t,x)$ to the integral equation given by \cref{inteq} satisfying
\begin{equation}\label{class1}
u\in C([0,T],H^s (\RR)),
\end{equation}
\begin{equation}\label{class2}
\norm{\langle \nabla \rangle^{\delta} u }_{L^{\infty}_x L^2_T} < \infty ,
\end{equation}
and 
\begin{equation}\label{class3}
\norm{u}_{L^{2(p-1)}_x L^{\infty}_T} <\infty .
\end{equation}
Moreover, for any $T'\in (0,T)$ there exists a neighborhood $V$ of $f$ in $H^s (\RR )$ such that the map $\tilde{f} \rightarrow \tilde{u}$ from $V$ into the class defined by \cref{class1,class2,class3} with $T'$ instead of $T$ is Lipschitz. 
\end{theorem}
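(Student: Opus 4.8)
The plan is to recast the problem as a fixed point equation for the map
\[
\Phi(u)(t,x) := L_\beta(t)f(x) + \mathcal D_\beta(g(u))(t,x),
\]
where $L_\beta(t)f$ and $\mathcal D_\beta(h)$ denote the first and second terms on the right-hand side of \cref{inteq}, and to run a contraction argument in the space $X^s_T$ of functions on $[0,T]\times\RR$ with
\[
\norm{u}_{X^s_T} := \norm{u}_{C([0,T];H^s)} + \norm{\langle\nabla\rangle^{\delta} u}_{L^\infty_x L^2_T} + \norm{u}_{L^{2(p-1)}_x L^\infty_T} < \infty ,
\]
whose three summands are exactly the conclusions \cref{class1,class2,class3}. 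Since $\sigma>\alpha$, the hypotheses in \cref{parameterconditions} force $\delta>s$, so the middle norm genuinely records a smoothing effect. One then wants $\Phi$ to map the ball of radius $R\sim\norm{f}_{H^s}$ in $X^s_T$ into itself and to be a strict contraction there for $T$ small depending on $R$; the unique fixed point is the desired solution, the statement $T(\rho)\to\infty$ as $\rho\to 0$ follows by tracking the dependence on $R$, and uniqueness in the class as well as the Lipschitz property on $[0,T']$ follow from the very same multilinear estimates applied to differences.

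The heart of the matter is a package of linear estimates for $L_\beta$ and $\mathcal D_\beta$ in the three norms above. Inserting the asymptotic expansions \cref{Lasymp,NLasymp}, one sees that up to error terms and non-oscillatory tails the operator $L_\beta(t)$ behaves like a constant times $e^{-it|D|^\sigma}$ and $\mathcal D_\beta(h)$ like a constant times $\int_0^t |D|^{\sigma-\alpha}\,e^{-i(t-\tau)|D|^\sigma}h(\tau)\,d\tau$: a dispersive propagator with dispersion relation $|\xi|^\sigma$, except that the memory effect inserts a \emph{loss} of $\sigma-\alpha$ derivatives in the Duhamel term. For these model operators I would invoke the Kenig--Ponce--Vega family of estimates: the sharp Kato local smoothing bound $\norm{|D|^{(\sigma-1)/2}e^{-it|D|^\sigma}f}_{L^\infty_x L^2_t}\lesssim\norm{f}_{L^2}$ and its dual and inhomogeneous forms (the ``double endpoint'' version gaining $\sigma-1$ derivatives for the retarded integral in $L^\infty_x L^2_T$, the half-derivative version controlling $L^\infty_T L^2_x$), a maximal-function estimate placing $e^{-it|D|^\sigma}f$ in $L^{2(p-1)}_x L^\infty_T$ at regularity $\tfrac12-\tfrac1{2(p-1)}$ together with its inhomogeneous analogue, and the straightforward $H^s$ energy bound for $L_\beta(t)f$. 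The loss of $\sigma-\alpha$ derivatives in $\mathcal D_\beta$ is paid for by the derivatives gained from smoothing; for the $H^s$ bound of the Duhamel term this leaves the exponent $s+\tfrac{\sigma+1}2-\alpha$ for the input $h$, and demanding that this be $\le s$ is precisely the condition $\alpha>\tfrac{\sigma+1}2$. The remaining constraints—$s\ge\tfrac12-\tfrac1{2(p-1)}$ and the interval for $\delta$—are likewise calibrated so that every norm of $\mathcal D_\beta(g(u))$ can be estimated by $g(u)$ at a differentiation order $\le s$, with the lower endpoint $\delta\ge s+\sigma-\alpha$ ensuring that the $\sigma-\alpha$ derivatives lost in the Duhamel term can be absorbed by the smoothing norm \cref{class2} when bounding the $L^{2(p-1)}_x L^\infty_T$ piece.

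Granting the linear theory, the nonlinear step is to estimate $g(u)$—and, schematically, $g(u)-g(v)=(u-v)\,P(u,\bar u,v,\bar v)$ with $P$ homogeneous of degree $p-1$—in norms of the type $\norm{\langle\nabla\rangle^{\gamma}g(u)}_{L^1_x L^2_T}$ for $\gamma$ one of the orders ($\le s$) dictated above. Here one uses the Kato--Ponce fractional Leibniz rule to move $\langle\nabla\rangle^\gamma$ onto a single factor, Hölder in $x$ and $t$ to place that factor in $L^2_x L^2_T$ and the remaining $p-1$ factors in $L^{2(p-1)}_x L^\infty_T$, and finally $\norm{\langle\nabla\rangle^\gamma u}_{L^2_x L^2_T}\le T^{1/2}\norm{u}_{C([0,T];H^s)}$ since $\gamma\le s$. (Commutator terms from the Leibniz rule and, where convenient, the smoothing norm \cref{class2} in place of the energy norm are handled in the same spirit.) The outcome is a bound of the form $\norm{g(u)-g(v)}\lesssim T^{\theta}\,\norm{u-v}_{X^s_T}\big(\norm{u}_{X^s_T}+\norm{v}_{X^s_T}\big)^{p-1}$ for some $\theta>0$ and each relevant norm, which closes the contraction once $T$ is small.

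I expect the genuine difficulty to lie in the linear estimates, precisely because the solution operator is only \emph{asymptotically} a dispersive semigroup. Away from the transition region $t|\xi|^\sigma\sim 1$ one substitutes \cref{Lasymp,NLasymp}, but the error terms $O(|z|^{-N})$ and the non-oscillatory tails $\sum_k z^{-k}/\Gamma(\cdot)$—which gain powers of $|\xi|$ but are singular in $t$ near the origin—must be controlled uniformly on $[0,T]$ and reassembled carefully under the time convolution against $(t-\tau)^{\beta-1}$; inside $t|\xi|^\sigma\lesssim 1$ one must instead work with the convergent series \cref{ML,genML} for $E_\beta$ and $E_{\beta,\beta}$ and estimate the resulting operators directly (they turn out to be harmless, essentially bounded Fourier multipliers fractionally integrated in time). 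Checking that all of these contributions, together with the KPV bounds for the principal oscillatory part, simultaneously fit through the single parameter window \cref{parameterconditions} is where the real work is; once the linear estimates are in hand, the fixed point argument, the continuity $u\in C([0,T];H^s)$ via dominated convergence in \cref{inteq}, and the Lipschitz dependence on the data are routine.
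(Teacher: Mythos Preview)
Your overall strategy---KPV smoothing and maximal-function estimates for the principal oscillatory part $e^{-it|\nabla|^\sigma}$, separate handling of the non-oscillatory tails and of the region $t|\xi|^\sigma\lesssim 1$ via the series \cref{ML,genML}, then a contraction in the three-norm space $X^s_T$---matches the paper's proof exactly, as does your reading of the parameter condition $\alpha>\tfrac{\sigma+1}{2}$.

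Where your sketch diverges is the nonlinear step. You assert that the Duhamel estimates require $g(u)$ only ``at a differentiation order $\le s$'' and propose to place the differentiated factor in $L^2_x L^2_T$, bounding it by $T^{1/2}\norm{u}_{C_T H^s}$ via the energy norm. In the paper's arrangement this is not what happens: the input norm for the $L^\infty_T H^s_x$ and $L^{2(p-1)}_x L^\infty_T$ Duhamel bounds is $L^2_{T,x}$ (not $L^1_x L^2_T$), and the differentiation order required on the nonlinearity is $s+\sigma-\alpha$, which is \emph{strictly larger} than $s$ since $\sigma>\alpha$. This is precisely why the smoothing norm is essential rather than a convenience: the paper's product lemma places the differentiated factor in $L^\infty_x L^2_T$ and the remaining $p-1$ factors in $L^{2(p-1)}_x L^\infty_T$, giving
\[
\norm{\langle\nabla\rangle^{s+\sigma-\alpha}(|u|^{p-1}u)}_{L^2_{T,x}}\lesssim \norm{\langle\nabla\rangle^{\delta} u}_{L^{\infty}_x L^2_T}\,\norm{u}_{L^{2(p-1)}_x L^{\infty}_T}^{p-1},
\]
and this is exactly what forces the lower endpoint $\delta\ge s+\sigma-\alpha$. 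Your alternative---double-endpoint smoothing with $L^1_x L^2_T$ input gaining $\sigma-1$ derivatives---might bring the order back below $s$ for the $\eta_1,\eta_2$ bounds, but the paper does not establish such estimates for the non-semigroup pieces $\tilde S_t,\tilde T_t,\tilde U_t$, and it is not clear how that input norm would feed into the maximal-function bound for $\eta_3$. A minor side remark: because $p$ is odd, $|u|^{p-1}u$ is a polynomial in $u,\bar u$, and the paper uses an elementary Leibniz/convolution argument rather than Kato--Ponce.
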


Finally, we present a result of analytic ill-posedness for supercritical regularity. According to Holmer \cite{JH}, this method was introduced by Bourgain in \cite{Bourgain}. This type of result appears for example in the work of Holmer himself for the 1D Zhakarov system, \cite{JH}, the work of Germain on the Navier-Stokes equation, \cite{Germain}, as well as the results of Molinet, Saut and Tzvetkov, \cite{molinet}, in the case of the KP-I equation. Lebeau was the first to show the stronger result of loss of regularity in his work on the nonlinear wave equation \cite{lebeau1,lebeau2}.
Regarding the nonlinear Schr\"odinger equation, Bejenaru and Tao proved ill-posedness of the quadratic Schr\"odinger equation, \cite{bejenaru}, Thomann proved loss of regularity for NLS on analytic Riemannian manifolds, \cite{thomann}, and Alazard and Carles
showed loss of regularity for supercritical NLS, \cite{alazard}, among other results.

\begin{theorem}\label{illtheorem} Consider the initial value problem:
\[\left\{
    \begin{array}{ll}
    i^{\beta}\partial_t^{\beta} u & = (-\Delta_x)^{\frac{\alpha}{2}}u + \mu|u|^{p-1} u,\quad (t,x)\in [0,T]\times\RR,\\
    u|_{t=0} & = f\in H^s(\RR) ,
    \end{array}
\right.\]
where $\alpha\in (0,2)$, $\beta\in (0,1)$, and $\mu=\pm 1$. If $p\geq 3$ is an integer, then the initial data-to-solution map from $H^s(\RR)$ to $C_t([0,T],H^s_x(\RR))$ is not $C^p$ for $s<s_c=\frac{1}{2}-\frac{\alpha}{p-1}$. If $p=2$, then the initial data-to-solution map from $H^s(\RR)$ to $C_t([0,T],H^s_x(\RR))$ is not $C^2$ for any $s$.
\end{theorem}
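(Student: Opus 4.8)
The plan is to run the Picard-iteration (``Bourgain'') argument: assume the data-to-solution map $F$ is $C^{p}$ near the origin (resp.\ $C^{2}$ when $p=2$), identify its first non-vanishing Fr\'echet derivative at $0$ explicitly, and contradict the resulting a priori estimate by rescaling. Because the nonlinearity $\mu|u|^{p-1}u=\mu\,u^{\frac{p+1}{2}}\bar u^{\frac{p-1}{2}}$ is homogeneous of degree $p$ and polynomial in $(u,\bar u)$, iterating the Duhamel formula \cref{inteq} gives $F(f)=S(t)f+A_{p}(f)+O(\norm{f}_{H^{s}}^{2p-1})$, where $S(\tau)$ is the Fourier multiplier with symbol $E_{\beta}(i^{-\beta}\tau^{\beta}|\xi|^{\alpha})$ (so $S(\tau)f$ is the linear solution) and
\[A_{p}(f)(t)=\mu\,i^{-\beta}\int_{0}^{t}(t-\tau)^{\beta-1}\,E_{\beta,\beta}\!\big(i^{-\beta}(t-\tau)^{\beta}(-\Delta_{x})^{\alpha/2}\big)\Big[\,|S(\tau)f|^{p-1}S(\tau)f\,\Big]\,d\tau\]
is the first Picard iterate. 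Hence $D^{j}F(0)=0$ for $2\le j\le p-1$ and, comparing with the Taylor expansion of $F$ at $0$, $A_{p}(f)=\tfrac1{p!}D^{p}F(0)[f,\dots,f]$; if $F$ is $C^{p}$ then $D^{p}F(0)$ is bounded $p$-linear, so $\sup_{t\in[0,T]}\norm{A_{p}(f)(t)}_{H^{s}}\le C\norm{f}_{H^{s}}^{p}$ for all Schwartz $f$, and it remains to contradict this when $s<s_{c}$.

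To obtain a lower bound on $A_{p}$, fix a Schwartz profile $\phi$ with $\widehat\phi\ge0$ and $\operatorname{supp}\widehat\phi\subseteq[1,2]$. Since $E_{\beta,\beta}$ is entire with $E_{\beta,\beta}(0)=\Gamma(\beta)^{-1}$, since $(t-\tau)^{\beta-1}$ is integrable, and since $\widehat{|S(\tau)\phi|^{p-1}S(\tau)\phi}\to\widehat{|\phi|^{p-1}\phi}$ uniformly on compacta as $\tau\to0^{+}$, letting $t\to0^{+}$ in the formula above yields, uniformly for $\xi$ in a fixed compact set,
\[\widehat{A_{p}(\phi)}(t,\xi)=\mu\,i^{-\beta}\Big(\tfrac{t^{\beta}}{\beta\,\Gamma(\beta)}\,\widehat{|\phi|^{p-1}\phi}(\xi)+o(t^{\beta})\Big).\]
Now $\widehat{|\phi|^{p-1}\phi}=\widehat\phi^{\,*\frac{p+1}{2}}*\big(\widehat\phi(-\cdot)\big)^{*\frac{p-1}{2}}$ is a convolution of non-negative bumps, hence strictly positive on the interior of its (compact) support, which contains a fixed window $W\Subset(0,\infty)$. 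Consequently there is $t_{0}>0$ with $\norm{P_{W}A_{p}(\phi)(t_{0})}_{H^{s}}\ge c_{0}>0$, where $P_{W}$ is the frequency projection onto $W$.

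Now invoke the scaling symmetry of \cref{linear}: $\phi_{\lambda}(x):=\lambda^{\frac{\alpha\beta}{p-1}}\phi(\lambda^{\beta}x)$ satisfies $A_{p}(\phi_{\lambda})(t,x)=\lambda^{\frac{\alpha\beta}{p-1}}A_{p}(\phi)(\lambda^{\alpha}t,\lambda^{\beta}x)$, while $\phi_{\lambda}$ is frequency-supported near $\lambda^{\beta}$, so $\norm{\phi_{\lambda}}_{H^{s}}\sim\lambda^{\beta(s-s_{c})}$ (note $\tfrac{\alpha\beta}{p-1}+\beta s-\tfrac\beta2=\beta(s-s_{c})$). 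Restricting $A_{p}(\phi_{\lambda})(t)$ to the window $\lambda^{\beta}W$, where $\langle\xi\rangle\sim|\xi|\sim\lambda^{\beta}$, and taking $t=t_{\lambda}:=\lambda^{-\alpha}t_{0}\in(0,T]$ for $\lambda$ large gives $\norm{A_{p}(\phi_{\lambda})(t_{\lambda})}_{H^{s}}\gtrsim c_{0}\,\lambda^{\beta(s-s_{c})}$, hence
\[\frac{\norm{A_{p}(\phi_{\lambda})(t_{\lambda})}_{H^{s}}}{\norm{\phi_{\lambda}}_{H^{s}}^{p}}\gtrsim c_{0}\,\lambda^{(1-p)\beta(s-s_{c})},\]
which tends to $\infty$ as $\lambda\to\infty$ whenever $s<s_{c}$ (then $(1-p)\beta(s-s_{c})>0$); since $\norm{\phi_{\lambda}}_{H^{s}}\to0$, this contradicts the a priori estimate, so $F$ is not $C^{p}$ for $s<s_{c}$. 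For $p=2$ the nonlinearity $\mu|u|u$ is only $C^{1}$, but the same scheme still gives $\tfrac12 D^{2}F(0)[f,f]=A_{2}(f)$ with $A_{2}$ as above ($p=2$); here there is a parity obstruction at \emph{every} regularity: $|{-w}|({-w})=-|w|w$ forces $A_{2}(-f)=-A_{2}(f)$, whereas any bounded quadratic form $q=\tfrac12 D^{2}F(0)[\cdot,\cdot]$ satisfies $q(-f)=q(f)$, so $C^{2}$-ness would force $A_{2}\equiv0$, contradicting $\widehat{A_{2}(\phi)}(t,\xi)\sim\mu i^{-\beta}\tfrac{t^{\beta}}{\beta\Gamma(\beta)}\widehat{|\phi|\phi}(\xi)\not\equiv0$ for small $t$.

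The step I expect to require the most care is the rigorous identification $A_{p}(f)=\tfrac1{p!}D^{p}F(0)[f,\dots,f]$ for the (Schwartz) data used: this needs that the solution produced by the hypothetical $C^{p}$ map genuinely satisfies \cref{inteq} and that the higher Picard corrections are $o(\norm{f}_{H^{s}}^{p})$, which one justifies by running the contraction of \cref{inteq} at high regularity (where \cref{maintheorem} applies) and then passing to $H^{s}$ by density together with the boundedness of $D^{p}F(0)$ on $(H^{s})^{p}$. The small-time asymptotics, the positivity of $\widehat{|\phi|^{p-1}\phi}$ on $W$, and the scaling bookkeeping are then routine.
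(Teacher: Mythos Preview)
Your approach is correct (for odd $p\ge 3$ and for $p=2$) and genuinely different from the paper's, and in several respects more elementary. The paper fixes a one--parameter family of test data $\widehat{f_N}=N^{\varepsilon-s}\chi_{[N-N^{-2\varepsilon},N]}$, uses the large--argument asymptotics \eqref{Lasymp}--\eqref{NLasymp} of $E_\beta$ and $E_{\beta,\beta}$ to compute $u_1$ and $u_p$ (splitting the Duhamel time integral into three regimes $J_1,J_2,J_3$), analyzes the auxiliary function $h_{p,N}$ via contour integration to identify the optimal frequency window, and then introduces a second scaling parameter $\lambda=N^{b}$ and optimizes a linear program in $(\varepsilon,b)$ to reach $s<s_c$; for $p=2$ that linear program is unbounded, yielding all $s$. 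By contrast, you fix a single Schwartz profile $\phi$, use only the short--time behavior $E_{\beta,\beta}(0)=\Gamma(\beta)^{-1}$ (no Mittag--Leffler asymptotics), obtain a nonvanishing lower bound for $A_p(\phi)(t_0)$ on a fixed frequency window, and then let the one--parameter scaling $\phi_\lambda$ do all the work; for $p=2$ you replace the optimization by a clean parity obstruction. What your route buys is simplicity and robustness (nothing depends on the fine structure of $E_\beta$); what the paper's route buys is an explicit quantitative expansion of $\widehat{u_{p,N}}$ that could be reused for sharper results such as norm inflation.

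One point needs care. You write that $|u|^{p-1}u=u^{(p+1)/2}\bar u^{(p-1)/2}$ is polynomial in $(u,\bar u)$; this is only true for odd $p$, and it is exactly what makes your positivity claim $\widehat{|\phi|^{p-1}\phi}=\widehat\phi^{\,*\frac{p+1}{2}}*(\widehat\phi(-\cdot))^{*\frac{p-1}{2}}>0$ on $W$ work. For even $p\ge 4$ the nonlinearity is not polynomial, that convolution identity fails, and you would need an ad hoc argument to locate a frequency window $W\Subset(0,\infty)$ on which $\widehat{|\phi|^{p-1}\phi}\neq 0$. The cleanest fix is to observe that your parity argument for $p=2$ applies verbatim to every even $p$: since $|{-w}|^{p-1}(-w)=-|w|^{p-1}w$, one has $A_p(-f)=-A_p(f)$, while any diagonal of a bounded symmetric $p$--linear form with $p$ even is invariant under $f\mapsto -f$; hence $C^p$ would force $A_p\equiv 0$, contradicting the short--time asymptotic. (This actually gives a conclusion stronger than the paper's for even $p\ge 4$, namely failure of $C^p$ at \emph{every} $s$.)
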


We finish this section with a few remarks about the results in this paper.

\begin{rem}\label{cpapers} Between the publication of the first version of this article on the Arxiv and the current one, two papers were posted about a similar equation with coefficient $i$ instead of $i^{\beta}$ \cite{c2,c1}:
\begin{equation}\label{nopower}
\left\{
    \begin{array}{ll}
    i\partial_t^{\beta} u & = (-\Delta_x)^{\frac{\alpha}{2}}u + \mu |u|^{p-1}u, \quad (t,x)\in (0,T)\times \RR^n,\\
    u|_{t=0} & =f .
    \end{array}
\right.
\end{equation}
For initial data $f\in L^r(\RR^n)$, for general dimension $n$ and under certain technical conditions on the admissible triplet $(q,r,p)$, the authors prove local well-posedness of \cref{nopower} in the space $C_b \left([0,T),L^r(\RR^n)\right)\cap L^q\left([0,T),L^p(\RR^n)\right)$.

Beyond our arguments in favor of $i^{\beta}$ instead of $i$ explained in the introduction, one might wonder whether similar techniques could be used in our equation. Unfortunately, the Fourier multiplier that appears in the solution to \cref{nopower}, $E_{\beta}(-i t^{\beta}|\xi|^{\alpha})$, displays very different asymptotics compared to ours, $E_{\beta}(i^{-\beta} t^{\beta}|\xi|^{\alpha})$. As an illustrative example, consider the case $\beta\in (\frac{1}{2},1)$, where \cref{Lasymp} is still valid. Note that the argument in the exponential has negative real part for $z=-i t^{\beta}|\xi|^{\alpha}$, and thus decays. Therefore their asymptotics are in fact dominated by the ``good'' term $t^{-\beta}|\xi|^{-\alpha}$ and the mass of their solution tends to zero as time passes. The same happens with the Fourier multiplier acting on the nonlinearity, which is now $E_{\beta,\beta}(-it^{\beta}|\xi|^{\alpha})$. Its leading behavior is now given by a good term, instead of a complex exponential as in \cref{NLasymp}, and so there is no loss of derivatives that needs to be overcome. This allows for the use of standard techniques similar to Strichartz estimates that lead to the results explained above. However, one could argue that the fact that their Fourier multiplier does not display an asymptotically oscillatory behavior (and instead decays) is an argument against the use of \cref{nopower} as a generalization of classical Schr\"odinger for fractional $\beta$.
\end{rem}

\begin{rem} Note that condition \cref{parameterconditions} in \cref{maintheorem} together with $\alpha>0$ and $\beta\in (0,1)$ imply that $\sigma>\alpha>1$ and $\beta>\frac{1}{2}$.
\end{rem}

\begin{rem} It is possible to generalize \cref{maintheorem} for even and non-integer $p$, but the exposition becomes more intricate (and slightly restricting the range of parameters might be necessary). The main idea is to use the estimates in this paper with respect to the norms \cref{class2,class3}, together with interpolation theorems to develop additional linear estimates. This strategy is based on the work of Kenig, Ponce and Vega for the KdV equation in \cite{KPV}. One will need the fractional chain rule instead of  \cref{AvoidChainRule}, which can be found in \cite{KPV}, too.
\end{rem}

\begin{rem} Note that there is a gap in the regularities where we have local well-posedness, given by \cref{maintheorem}, and the ill-posedness result of \cref{illtheorem}. For $p\geq 5$, and $s$ in the range
\[ \frac{1}{2}-\frac{1}{2(p-1)}>s\geq s_c=\frac{1}{2}-\frac{\alpha}{p-1},\]
local well-posedness remains an open question.

In the case $p=3$, we probably should not expect local well-posedness all the way to $s_c$. Indeed, in the case $\beta=1$, local well-posedness is obtained when $s>\max\{s_c,s_g\}$, where 
\[s_g:=\frac{1}{2}-\frac{\alpha}{4},\]
is the regularity that is invariant under the (pseudo) Galilean invariance, see \cite{HS}. When $p\in (1,5)$ and $\beta=1$, local well-posedness holds only for $s>\frac{1}{2}-\frac{\alpha}{4}>s_c$, and this result is sharp in the cubic case, as proved in \cite{cubic}. This suggests that a similar thing might happen for \cref{linear} when $p=3$.

In the case of the KdV equation, extending the local well-posedness results presented in \cite{KPV} to negative regularities required the use of $X^{s,b}$ spaces, see \cite{KPV2}. Similar ideas might be necessary to overcome the gap in our case too.

Incidentally, closing this gap would probably allow an accurate comparison between the solution to \cref{linear} and the second iterate described in \cref{illtheorem}, in order to prove a stronger result of ill-posedness known as norm inflation.
\end{rem}

\begin{rem} Because of \cref{NLasymp}, if one tries to take the $L^{\infty}_T L^2_x$ norm of 
\[\int_{\RR} \int_0^{t} \widehat{g}(\tau , \xi) \, (t-\tau )^{\beta -1} E_{\beta , \beta }(i^{-\beta} (t-\tau )^{\beta} |\xi |^{\alpha}) \, e^{ix\cdot \xi}\, d\tau d\xi ,\]
we seem to lose $\sigma-\alpha$ derivatives, which is an obstacle to closing the contraction-mapping argument. 
In order to circumvent this problem, we exploit some smoothing effect of the linear operator, which explains the choice of norm in \cref{class2}.
Note that this is not an issue in the space-fractional case ($\beta=1$) because then $\sigma=\alpha$.
\end{rem}

\begin{rem} The smoothing effect mentioned above is however limited. As will be seen in \cref{smoothingprop}, one can balance the loss of derivatives only if 
\begin{equation}\label{smoothingcondition}
\frac{\sigma-1}{2}>\sigma-\alpha,
\end{equation} which restricts the range of parameters to that presented in \cref{parameterconditions}. In fact, note that \cref{smoothingcondition} directly implies that $\alpha >1$ and $\beta >\frac{1}{2}$, among other things.
\end{rem}

\begin{rem} If one wants to generalize this theorem to a higher dimension $n$, a good place to start is the generalization of \cref{smoothingprop}, which is straight-forward. However, condition \cref{smoothingcondition}, necessary to overcome the loss of derivatives in the nonlinear term, becomes 
\[\frac{\sigma-n}{2}>\sigma-\alpha .\]
This forces $\alpha$ to be very large and so it gives rise to a somewhat uninteresting result.
\end{rem}

\begin{rem} Even if one could somehow control some $H^s_x$ norm of the solution globally in time, one may not easily iterate this local well-posedness result towards global well-posedness. This is precisely because of the memory effect, which also manifests itself in the lack of time-translation invariance. In other words, suppose we solve equation \cref{linear} for initial data $f=u(0)$ in an interval $[0,T]$ given by \cref{maintheorem}, and let that solution be $u(t)$. Then consider the IVP  \cref{linear} for initial data $f=u(T)$ this time, and its solution $v$ in some small time interval. If $\beta$ were 1, we would expect $v(t)=u(t+T)$ to hold in this interval of existence, thereby extending the lifespan of our solution. However, this fails for $\beta<1$. Instead, the right equation in the second step would be 
\begin{equation}\label{timeshifted}
\left\{ \begin{array}{ll}
i^{\beta} \prescript{}{-T}{\partial_t^{\beta}} v & = (-\Delta_x)^{\alpha /2}\, v + g(v) \qquad (t,x)\in (0,\infty )\times\RR , \\
v\mid_{t=0} & = u(T) .
\end{array}\right.
\end{equation}
where $\prescript{}{-T}{\partial_t^{\beta}} v$, is a different version of the Caputo derivative:
\[\prescript{}{-T}{\partial_t^{\beta}} v (t,x)=\frac{1}{\Gamma (1-\beta)} \int_{-T}^{t} \frac{\partial_{\tau}v(\tau,x)}{(t-\tau)^{\beta}} \, d\tau .\]
Unfortunately, it is not clear that solving \cref{timeshifted} produces an advantage over dealing with \cref{linear} directly, and therefore more research in this direction might be necessary.
\end{rem}

\begin{rem} The requirement that $p$ be an integer  in \cref{illtheorem} is not necessary. For noninteger $p$, instead of the initial data-to-solution map, one should speak about the second iterate of the following iteration scheme:
\[\left\{
    \begin{array}{ll}
    i^{\beta}\partial_t^{\beta} u_{k+1} & = (-\Delta_x)^{\frac{\alpha}{2}}u_{k+1} + \mu|u_k|^{p-1} u_k,\quad (t,x)\in [0,T]\times\RR,\\
    u_{k+1}|_{t=0} & = f\in H^s(\RR) ,
    \end{array}
\right.\]
where $u_1$ is the linear flow. Then the statement of \cref{illtheorem} becomes the following:
 when $p\geq 3$, the map from initial data in $H^s(\RR)$ to the second iterate in $C_t([0,T],H^s_x(\RR))$ is not continuous whenever $s<s_c=\frac{1}{2}-\frac{\alpha}{p-1}$. For $1<p<3$, this map will not be continuous for any $s$.
\end{rem}

\begin{rem} One might wonder if an ill-posedness result based on a small dispersion approach to show phase decoherence would be possible, in the spirit of the work in \cite{CCT}. There are several complications with this approach, such as the lack of symmetries available in our equation, the loss of derivatives, and the need for a better understanding of the behavior of the solution to the no-dispersion fractional ODE.
\end{rem}

\subsection{Notation}

We will denote by $A\lesssim B$ an estimate of the form $A\leq C B$ for some constant $B$ that might change from line to line. Similarly,  $A\lesssim_d B$ means that the implicit constant $C$ depends on $d$. 

We introduce the notation $a-$ to denote the number $a-\varepsilon$ for $0<\varepsilon \ll 1$ small enough. Similarly, we denote by $a+$ the number $a+\varepsilon$ for $0<\varepsilon \ll 1$ small enough.

For $1\leq p,q\leq \infty$ and $u: \RR \times [0,T] \longrightarrow \CC$, we define 
\[\norm{u}_{L^p_x L^q_T}=\left(\int_{\RR} \left(\int_0^T |u(t,x)|^q \, dt\right)^{\frac{p}{q}} \, dx\right)^{\frac{1}{p}} ,\]
and also 
\[\norm{u}_{ L^q_T L^p_x }=\left(\int_0^T  \left(\int_{\RR} |u(t,x)|^p \, dx\right)^{\frac{q}{p}} \, dt\right)^{\frac{1}{q}} ,\]
with the usual modifications when $p$ or $q=\infty$.
For $u: \RR \times [0,\infty) \longrightarrow \CC$, we will use the notation $L^p_x L^q_t$ and $L^q_t L^p_x$ instead, meaning $T=\infty$. We will also write $L^{p}_{T,x}$ in the case $p=q$.

We also use the standard notation for the spatial Fourier transform 
\[\widehat{f}(\xi)=\int_{\RR} f(x) e^{-ix\cdot \xi} \, dx ,\]
as well as $f^{\vee}$ for the inverse Fourier transform. The fractional Laplacian will be given by the Fourier multiplier:
\[\widehat{(-\Delta_x)^{\frac{s}{2}}f}(\xi)=\widehat{|\nabla |^{s} f}(\xi)=|\xi |^{s} \widehat{f}(\xi ).\]
 Similarly, 
 \[\widehat{\langle\nabla\rangle^s f}(\xi)=(1+|\xi|)^{s} \widehat{f}(\xi ).\]
The following notation will be used for some Sobolev norms:
\begin{align*}
\norm{f}_{\dot{H}^s(\RR)}& := \norm{|\nabla |^s f}_{L^2(\RR)},\\
\norm{f}_{H^s(\RR)}& :=\norm{\langle\nabla\rangle^s f}_{L^2(\RR)}.
\end{align*}
Finally, $C([0,T],H^s (\RR))$ denotes the space of continuous functions $u$ from a time interval $[0,T]$ to $H^s(\RR)$ equipped with the norm $\max_{t\in [0,T]}\norm{u(t,\cdot)}_{H^s_x(\RR )}$.

Additionally, we compile the following list of symbols and parameters that will be used along the paper:
\begin{itemize}
\item $\alpha$ - the order of the fractional Laplacian $(-\Delta_x)^{\frac{\alpha}{2}}$.
\item $\beta$  - the order of the Caputo derivative in time $\partial_t^{\beta}$.
\item $\sigma$ - the ratio $\frac{\alpha}{\beta}$.
\item $p$ - the degree of the power-type nonlinearity.
\item $\gamma=\frac{\sigma-1}{2}$ - the gain in the linear part thanks to the smoothing effect.
\item $\tilde{\gamma}=\alpha- \frac{\sigma+1}{2}$ - the gain in the nonlinear part thanks to the smoothing effect.
\end{itemize}

\subsection{Outline}
The paper is organized as follows. In \cref{sec: linear estimates}, we prove linear estimates with respect to the norms given in \cref{class1,class2,class3}. In order to do this, we use a representation of the solution given by a Fourier multiplier for which we only have asymptotic formulas, and thus it requires a somewhat different treatment on small and large frequencies, as well as studying a remainder. In \cref{sec: easy proof}, we employ these linear estimates to prove \cref{maincor}, which is similar to \cref{maintheorem} for a special but illustrative choice of parameters, which simplifies the exposition. In \cref{sec: ill-posedness}, we prove \cref{illtheorem}.
In \cref{sec: appA}, we present additional information about Caputo fractional derivatives and explain our representation of the solutions. In \cref{sec: appB}, we provide the proof of \cref{maintheorem} in full generality.

\section{Linear estimates}
\label{sec: linear estimates}

For $f:\RR \longrightarrow \CC$, we introduce the following operators:
\begin{align}
S_t f(x) & = \int_{\RR} e^{-i t |\xi |^{\sigma}}\, \chi_{\{t|\xi |^{\sigma} \leq M\}} \widehat{f}(\xi) \, e^{ix\xi} d\xi , \nonumber\\
T_t f(x) & = \int_{\RR} t^{-\beta} |\xi |^{-\alpha} \chi_{\{t|\xi |^{\sigma} > M\}} \widehat{f}(\xi) \, e^{ix\xi} d\xi ,\label{Loperators}\\
U_t f(x) & = \int_{\RR} E_{\beta} (i^{-\beta}t^{\beta} |\xi |^{\alpha})\, \chi_{\{t|\xi |^{\sigma} \leq M\}} \widehat{f}(\xi) \, e^{ix\xi} d\xi ,\nonumber
\end{align}
where $\chi_{\{t|\xi |^{\sigma} \leq M\}}=\chi (t,\xi )$ denotes a smooth function supported on the set $\{ (t,\xi)\in (0,\infty)\times \RR \mid t|\xi |^{\sigma} \leq 2M\}$ for some large $M>0$, satisfying $\chi (t,\xi )=1$ if  $t|\xi |^{\sigma} \leq M$. Similarly, we will denote $\chi_{\{t|\xi |^{\sigma} > M\}}:=1-\chi_{\{t|\xi |^{\sigma} \leq M\}}$.

The operators $S_t$ and $U_t$ will capture the behavior of the solution \cref{Lsolution} when $t|\xi |^{\sigma}$ is small, whereas $e^{it|\nabla |^{\sigma}}$ and $T_t$ will do so for large $t|\xi |^{\sigma}$, based on the first and second terms of the asymptotic formula \cref{Lasymp}.

Similarly, the following multiplier operators will also play a role when dealing with the nonlinearity:
\begin{align}
\tilde{S}_t f(x) & = \int_{\RR} |\xi |^{\sigma -\alpha} \, e^{-i t |\xi |^{\sigma}}\, \chi_{\{t|\xi |^{\sigma} \leq M\}} \widehat{f}(\xi) \, e^{ix\xi} d\xi ,\nonumber\\
\tilde{T}_t f(x) & = \int_{\RR} t^{-1-\beta} |\xi |^{-2\alpha} \chi_{\{t|\xi |^{\sigma} > M\}} \widehat{f}(\xi) \, e^{ix\xi} d\xi ,\label{NLoperators}\\
\tilde{U}_t f(x) & = \int_{\RR} t^{\beta-1} \, E_{\beta , \beta} (i^{-\beta}t^{\beta} |\xi |^{\alpha})\, \chi_{\{t|\xi |^{\sigma} \leq M\}} \widehat{f}(\xi) \, e^{ix\xi} d\xi  .\nonumber
\end{align}

As before, the operators $\tilde{S}_t$ and $\tilde{U}_t$ will capture the behavior of the second term in \cref{inteq} when $t|\xi |^{\sigma}$ is small. For large values of $t|\xi |^{\sigma}$, the operators $|\nabla |^{\sigma-\alpha} e^{it|\nabla |^{\sigma}}$ and $\tilde{T}_t$ will be used, based on the first and second terms in \cref{NLasymp}.

\subsection{$L^{\infty}_x L^2_T$ estimates - smoothing effect}

The following two propositions are a generalization of Theorem 3.5 in \cite{KPV}.

\begin{proposition}\label{smoothingprop} Let $\gamma=\frac{\sigma -1}{2}\geq 0$. Then 
\begin{equation}\label{smoothingeffect}
\norm{|\nabla |^{\gamma} e^{-it |\nabla |^{\sigma} } f}_{L^{\infty}_x L^2_t}\lesssim \norm{f}_{L^2_x} \ .
\end{equation}
\end{proposition}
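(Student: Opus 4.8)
The plan is to fix $x\in\RR$, bound the $L^2$ norm in $t\in\RR$ of $|\nabla|^\gamma e^{-it|\nabla|^\sigma}f(x)$ by a constant times $\norm{f}_{L^2_x}$ \emph{uniformly in} $x$, and then take the supremum over $x$. Following the idea of Kenig, Ponce and Vega, the point is that a change of variables in the frequency variable turns $|\xi|^\sigma$ into the variable dual to $t$, so that the quantity becomes (a rescaled) one-dimensional Fourier transform in $t$ and Plancherel in $t$ applies directly.

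Concretely, write
\[
|\nabla|^{\gamma} e^{-it|\nabla|^\sigma} f(x)=\int_{\RR}|\xi|^{\gamma}e^{-it|\xi|^\sigma}\widehat f(\xi)\,e^{ix\xi}\,d\xi ,
\]
split the integral into the regions $\xi>0$ and $\xi<0$, and in each piece substitute $\tau=|\xi|^{\sigma}$, i.e. $|\xi|=\tau^{1/\sigma}$ and $d\xi=\tfrac1\sigma\tau^{1/\sigma-1}\,d\tau$. The contribution of $\xi>0$ becomes $\int_0^{\infty}e^{-it\tau}G_x(\tau)\,d\tau$, where
\[
G_x(\tau)=\tfrac1\sigma\,\tau^{\frac{\gamma+1-\sigma}{\sigma}}\,\widehat f(\tau^{1/\sigma})\,e^{ix\tau^{1/\sigma}}\,\chi_{\{\tau>0\}},
\]
so up to the usual constant this is the Fourier transform in $t$ of $G_x$. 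By Plancherel in $t$, $\int_{\RR}\big|\int_0^\infty e^{-it\tau}G_x(\tau)\,d\tau\big|^2\,dt\simeq\norm{G_x}_{L^2_\tau}^2$, and changing variables back via $\tau=\xi^\sigma$ gives $\norm{G_x}_{L^2_\tau}^2=\tfrac1\sigma\int_0^\infty\xi^{\,2\gamma+1-\sigma}|\widehat f(\xi)|^2\,d\xi$. The exponent $2\gamma+1-\sigma$ vanishes \emph{exactly} when $\gamma=\tfrac{\sigma-1}{2}$, leaving $\tfrac1\sigma\int_0^\infty|\widehat f(\xi)|^2\,d\xi$, which is independent of $x$ since $|e^{ix\tau^{1/\sigma}}|=1$. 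Handling $\xi<0$ identically and adding the two contributions yields $\norm{|\nabla|^\gamma e^{-it|\nabla|^\sigma}f}_{L^\infty_xL^2_t}\lesssim\norm{\widehat f}_{L^2}\simeq\norm{f}_{L^2_x}$.

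The only genuine work, and the main place one must be careful, is the justification of these manipulations, in particular the Fourier-transform interpretation and the use of Plancherel. I would first establish the estimate for $f$ in a dense class --- say with $\widehat f\in C_c^\infty(\RR\setminus\{0\})$, or more generously Schwartz --- so that all integrals converge absolutely, and check that $G_x\in L^1_\tau\cap L^2_\tau$: at infinity the rapid decay of $\widehat f$ suffices, while near $\tau=0$ the weight is $\tau^{(\gamma+1-\sigma)/\sigma}=\tau^{(1-\sigma)/(2\sigma)}$, whose exponent satisfies $\tfrac{1-\sigma}{2\sigma}>-\tfrac12$ (indeed $>-1$) for every $\sigma\ge1$, so it is locally square-integrable (and locally integrable). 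This makes the change of variables and Plancherel legitimate, and a density argument then extends the bound to all $f\in L^2(\RR)$. The hypothesis $\gamma\ge0$ is only the restatement of $\sigma\ge1$ and plays no role in the computation itself; the substantive point is precisely the low-frequency behavior of the transformed density, which is exactly why the constraint $\gamma=\tfrac{\sigma-1}{2}$ is both necessary and sufficient for the weight to collapse to a constant.
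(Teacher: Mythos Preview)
Your proof is correct and follows essentially the same approach as the paper: split the frequency integral into $\xi>0$ and $\xi<0$, substitute $\tau=|\xi|^{\sigma}$, apply Plancherel in $t$, and use $2\gamma+1-\sigma=0$. The only cosmetic difference is that the paper combines the two half-lines into a single function $\widehat{f}^{\star}(x,\xi)=\widehat{f}(\xi)e^{ix\xi}+\widehat{f}(-\xi)e^{-ix\xi}$ before applying Plancherel, whereas you treat them separately and use the triangle inequality; both routes lose the same harmless constant.
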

\begin{proof}
We define
\[ F(t,x):= \int_{\RR} |\xi |^{\gamma} e^{-it |\xi|^{\sigma}} \widehat{f}(\xi) e^{ix\xi} \, d\xi .\]
Then we can rewrite this as:
\begin{align*}
F(t,x) & =\int_0^{\infty} \xi^{\gamma} e^{-it \xi^{\sigma}} \left( \widehat{f}(\xi ) e^{ix\xi} + \widehat{f}(-\xi )e^{-ix\xi}\right) d\xi = \int_0^{\infty} \xi^{\gamma} e^{-it \xi^{\sigma}}  \widehat{f}^{\star}(x,\xi) d\xi \\
& = \frac{1}{\sigma}\int_0^{\infty} \mu^{\frac{\gamma+1}{\sigma}-1}\widehat{f}^{\star}(x,\mu^{\frac{1}{\sigma}}) e^{-it\mu} \, d\mu .
\end{align*}
By the Plancherel identity,
\begin{align*}
\norm{F(x)}_{L^2_t}^2 & = \int_0^{\infty} \Big | \frac{1}{\sigma} \mu^{\frac{\gamma+1}{\sigma}-1}\widehat{f}^{\star}(x,\mu^{\frac{1}{\sigma}}) \Big |^2 \, d\mu \\
& = \int_{0}^{\infty} \frac{1}{\sigma} \xi^{2\gamma + 1 -\sigma } |\widehat{f}^{\star}(x,\xi)|^2 d\xi= \int_{0}^{\infty} \frac{1}{\sigma}|\widehat{f}^{\star}(x,\xi)|^2 d\xi .
\end{align*}
Finally, since $\sup_x |\widehat{f}^{\star}(x,\xi)|^2\leq 2 | \widehat{f}(\xi)|^2 + 2 | \widehat{f}(-\xi)|^2$ we obtain
\[\norm{F}_{L^{\infty}_x L^2_t}\lesssim \norm{f}_{L^2_x} .\]
\end{proof}

\begin{rem} It is easy to further generalize this proposition to any dimension $n$. In such a case, if $\gamma=\frac{\sigma -n}{2}$ then \cref{smoothingeffect} holds. As we will see later, in order to gain derivatives and close the contraction-mapping principle with the techniques employed in this paper, we need $\alpha - \sigma + \gamma>0$ to be true. This will not happen for any $n\geq 2$.
\end{rem}

From the linear estimate in \cref{smoothingprop} we obtain the following:

\begin{proposition}\label{smoothingprop2} For $\gamma=\frac{\sigma -1}{2}\geq 0$ and a fixed time $T>0$, we have
\[\norm{|\nabla |^{\gamma} \int_0^t e^{-i(t-t') |\nabla |^{\sigma} } G(t',x)  \, dt' }_{L^{\infty}_x L^2_T}\lesssim \norm{G}_{L^1_T L^2_x} . \]
\end{proposition}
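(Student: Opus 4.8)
The plan is to deduce Proposition~\ref{smoothingprop2} from the homogeneous estimate in Proposition~\ref{smoothingprop} by a duality/Minkowski-type argument, exactly in the spirit of the passage from a homogeneous Strichartz (or smoothing) estimate to its retarded (Duhamel) version. Write $W(t):=|\nabla|^{\gamma}e^{-it|\nabla|^{\sigma}}$, which is the operator whose mapping property $W(\cdot)f \in L^\infty_x L^2_t$ is controlled by $\norm{f}_{L^2_x}$ according to Proposition~\ref{smoothingprop}. The quantity to be estimated is $\norm{\int_0^t W(t-t')|\nabla|^{\gamma}{}^{-1}|\nabla|^{\gamma}\dots}$ — more precisely we want to bound, for fixed $x$, the $L^2_T$ norm in $t$ of $\int_0^t W(t-t')G(t',x)\,dt'$, and then take the supremum over $x$. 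The first step is to fix $x$ and move the $L^2_T$ norm inside the $t'$-integral using Minkowski's integral inequality: for each fixed $t'$ one is left with $\norm{W(t-t')G(t',\cdot)\big|_{\text{eval at }x}}_{L^2_{t\in(t',T)}} \le \norm{W(\cdot)G(t',\cdot)\big|_{\text{eval at }x}}_{L^2_{t\in\RR}}$, after the change of variables $t\mapsto t-t'$ and enlarging the domain of integration, which is legitimate since the integrand is nonnegative.

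The second step is to observe that the resulting inner expression, for each fixed $t'$, is precisely $\sup_x$-controlled by Proposition~\ref{smoothingprop} applied to the function $G(t',\cdot)$ (a fixed function of the spatial variable): $\norm{W(\cdot)G(t',\cdot)}_{L^\infty_x L^2_t}\lesssim \norm{G(t',\cdot)}_{L^2_x}$. Since we took $L^\infty_x$ on the outside first — or rather, since Minkowski gives us, for each $x$,
\[
\norm{\int_0^t W(t-t')G(t',x)\,dt'}_{L^2_{t\in(0,T)}} \le \int_0^T \norm{W(t-t')G(t',x)}_{L^2_{t}}\,dt' \le \int_0^T \norm{W(\cdot)G(t',\cdot)}_{L^\infty_x L^2_t}\,dt',
\]
and the last bound is uniform in $x$, we may take the supremum over $x$ on the left-hand side without changing the right-hand side. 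The third step is simply to insert Proposition~\ref{smoothingprop} into the integrand on the far right to get $\int_0^T \norm{G(t',\cdot)}_{L^2_x}\,dt' = \norm{G}_{L^1_T L^2_x}$, which is the claimed bound.

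I do not expect a serious obstacle here; this is a soft argument and the only points requiring a little care are (i) justifying the application of Minkowski's integral inequality, which needs only that $G\in L^1_T L^2_x$ so that the iterated integrals make sense and Tonelli applies to the nonnegative integrand $|W(t-t')G(t',x)|$, and (ii) making sure the enlargement of the $t$-domain from $(t',T)$ to all of $\RR$ (needed so that Proposition~\ref{smoothingprop}, which is stated with $L^2_t$ over the whole line, applies verbatim) is harmless — it is, again by positivity. One subtlety worth flagging is that Proposition~\ref{smoothingprop} is a global-in-time estimate while here we integrate $t'$ only over $[0,T]$, so there is no issue with the finiteness of the $t'$-integral: it is a genuine $L^1$ in $t'$ against a constant (in $t'$) bound, giving the factor $\norm{G}_{L^1_T L^2_x}$ with an implicit constant independent of $T$. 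The hardest conceptual point — absent here because it is handed to us — would have been proving the homogeneous smoothing estimate itself; given Proposition~\ref{smoothingprop}, the retarded version is essentially immediate.
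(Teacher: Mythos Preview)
Your proof is correct, but it takes a genuinely different route from the paper's. The paper argues by duality: it first records the dual of Proposition~\ref{smoothingprop},
\[
\norm{|\nabla|^{\gamma}\int_{\RR} e^{it'|\nabla|^{\sigma}} g(t',\cdot)\,dt'}_{L^2_x}\lesssim \norm{g}_{L^1_x L^2_t},
\]
restricts to $[t,T]$, composes with the unitary $e^{-it|\nabla|^{\sigma}}$, and takes $L^{\infty}_T$ to obtain an $L^{\infty}_T L^2_x$ bound for the backward Duhamel integral in terms of $\norm{g}_{L^1_x L^2_T}$. It then expresses the desired $L^{\infty}_x L^2_T$ norm as a pairing against $g\in L^1_x L^2_T$, swaps the $t$ and $t'$ integrals, and closes with H\"older. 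Your argument instead bypasses duality entirely: you fix $x$, apply Minkowski in $t'$ to pull the $L^2_T$ norm inside the Duhamel integral, enlarge the $t$-domain, and invoke Proposition~\ref{smoothingprop} pointwise in $t'$ on the function $G(t',\cdot)$. Your approach is shorter and more elementary for this particular statement; the paper's duality route is the standard $TT^{*}$-style argument from the Kenig--Ponce--Vega framework and has the advantage that its intermediate estimate \eqref{L1xL2T} (with $L^1_x L^2_T$ on the right) is itself a useful endpoint that a pure Minkowski argument does not give.
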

\begin{proof}
The dual of the estimate \cref{smoothingeffect} is
\begin{equation}\label{dual}
\norm{|\nabla |^{\gamma} \int_{-\infty}^{\infty} e^{it' |\nabla |^{\sigma} } g(t',x)  \, dt' }_{L^2_x}\lesssim \norm{g}_{L^1_x L^2_t} .
\end{equation}

From this estimate, consider the function $\tilde{g}(t',x):=  \chi_{[t,T]}(t')\, g(t',x)$, which is clearly dominated by $g$ and therefore is also in $L^1_x L^2_t$. We substitute this in \cref{dual} and obtain
\[\norm{|\nabla |^{\gamma} \int_t^T e^{it' |\nabla |^{\sigma} } g(t',x)  \, dt' }_{L^2_x}\lesssim \norm{g}_{L^1_x L^2([t,T])} .\]

Since the operator $e^{-it|\nabla |^{\sigma}}$ is unitary,
\[\norm{e^{-it|\nabla |^{\sigma}}\, |\nabla |^{\gamma} \int_t^T e^{it' |\nabla |^{\sigma} } g(t',x)  \, dt' }_{L^2_x}\lesssim \norm{g}_{L^1_x L^2([t,T])} .\]

We now take the $L^{\infty}_T$ norm:
\begin{equation}\label{L1xL2T}
\norm{|\nabla |^{\gamma} \int_t^T e^{-i(t-t') |\nabla |^{\sigma} } g(t',x)  \, dt' }_{L^{\infty}_T L^2_x}\lesssim \norm{g}_{L^1_x L^2_T} .
\end{equation}

Finally, consider
\begin{align*}
\left\lVert |\nabla |^{\gamma} \int_0^t \right. & \left. e^{-i(t-t') |\nabla |^{\sigma} } G(t',x)  \, dt' \right\rVert_{L^{\infty}_x L^2_T} \\
& = \sup_{\norm{g}_{L^1_x L^2_T}=1} \, \Big | \int_0^T \int_{\RR} \overline{g}(t,x) \left( |\nabla |^{\gamma} \int_0^t e^{-i(t-t') |\nabla |^{\sigma} } G(t',x)  \, dt' \right) \, dx \, dt \Big | \\
&  = \sup_{\norm{g}_{L^1_x L^2_T}=1} \, \Big | \int_0^T \int_{\RR} G(t',x) \overline{\left( |\nabla |^{\gamma} \int_{t'}^T e^{-i(t'-t) |\nabla |^{\sigma} } g(t,x)  \, dt \right)} \, dx \, dt' \Big | .
\end{align*}
Now we finish by using the H\"older inequality together with \cref{L1xL2T}.
\end{proof}

\begin{proposition} For any $0\leq \gamma' <\gamma=\frac{\sigma-1}{2}$, the operator $T_t$ defined in \cref{Loperators} satisfies
\[\norm{|\nabla |^{\gamma'} \, T_t f}_{L^{\infty}_x L^2_T}\lesssim T^{\frac{\gamma - \gamma'}{\sigma}} \norm{f}_{L^2_x} . \]
\end{proposition}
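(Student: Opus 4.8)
The plan is to reduce everything to a bound that is uniform in $x$, since the oscillatory factor $e^{ix\xi}$ contributes nothing once absolute values are taken. Writing
\[ |\nabla|^{\gamma'} T_t f(x) = \int_{\RR} t^{-\beta}\, |\xi|^{\gamma'-\alpha}\, \chi_{\{t|\xi|^{\sigma}>M\}}\, \widehat{f}(\xi)\, e^{ix\xi}\, d\xi , \]
I would first apply Minkowski's integral inequality to pull the $L^2_t([0,T])$ norm inside the $\xi$-integral, which gives
\[ \norm{ |\nabla|^{\gamma'} T_t f(x) }_{L^2_T} \le \int_{\RR} |\xi|^{\gamma'-\alpha}\, |\widehat{f}(\xi)| \left( \int_0^T t^{-2\beta}\, \chi_{\{t|\xi|^{\sigma}>M\}}^2\, dt \right)^{1/2} d\xi . \]

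The heart of the matter is the inner time integral. Since $\chi_{\{t|\xi|^{\sigma}>M\}}$ is bounded by $1$ and vanishes unless $t>M|\xi|^{-\sigma}$, this integral is zero unless $|\xi|>(M/T)^{1/\sigma}$, and for such $\xi$, using that $2\beta-1>0$ (the parameter regime in force forces $\beta>\frac{1}{2}$),
\[ \int_0^T t^{-2\beta}\, \chi_{\{t|\xi|^{\sigma}>M\}}^2\, dt \le \int_{M|\xi|^{-\sigma}}^{T} t^{-2\beta}\, dt \le \frac{M^{1-2\beta}}{2\beta-1}\, |\xi|^{\sigma(2\beta-1)} = \frac{M^{1-2\beta}}{2\beta-1}\, |\xi|^{2\alpha-\sigma}, \]
where the last equality uses $\sigma\beta=\alpha$. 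Two features of the cutoff enter here: it keeps $t$ away from the origin, so that the singular factor $t^{-2\beta}$ is integrable, and it confines $\xi$ to the region $|\xi|\gtrsim (M/T)^{1/\sigma}$, which is what will eventually furnish the power of $T$. Taking square roots and inserting this back, the resulting factor $|\xi|^{\alpha-\sigma/2}$ combines with $|\xi|^{\gamma'-\alpha}$ to leave
\[ \norm{ |\nabla|^{\gamma'} T_t f(x) }_{L^2_T} \lesssim \int_{|\xi|>(M/T)^{1/\sigma}} |\xi|^{\gamma'-\sigma/2}\, |\widehat{f}(\xi)|\, d\xi . \]

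I would finish by applying the Cauchy--Schwarz inequality in $\xi$. Because $\sigma-1=2\gamma$, the exponent obeys $2(\gamma'-\sigma/2)=2(\gamma'-\gamma)-1<-1$ exactly when $\gamma'<\gamma$, which is our hypothesis; hence $\int_{|\xi|>R}|\xi|^{2\gamma'-\sigma}\,d\xi \sim R^{2(\gamma'-\gamma)}$, and with $R=(M/T)^{1/\sigma}$ this produces a factor $(M/T)^{(\gamma'-\gamma)/\sigma}\lesssim T^{(\gamma-\gamma')/\sigma}$ (here $M$ is a fixed constant). Combining with Plancherel, $\norm{\widehat{f}}_{L^2}\lesssim\norm{f}_{L^2_x}$, yields the claimed bound with a constant independent of $x$, and taking the supremum over $x$ completes the argument. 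I do not expect a genuine obstacle beyond the bookkeeping above; the hypotheses are used at exactly two points — $\gamma'<\gamma$ is precisely the integrability threshold for the final $\xi$-integral, while $\beta>\frac{1}{2}$ is what makes the $t$-integral converge down to the lower cutoff $t\sim M|\xi|^{-\sigma}$.
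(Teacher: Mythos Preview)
Your proof is correct and follows essentially the same approach as the paper: Minkowski to bring the $L^2_T$ norm inside the $\xi$-integral, evaluation of the resulting time integral using the cutoff and $\beta>\tfrac12$, and then Cauchy--Schwarz in $\xi$ with the integrability condition $2\gamma'-\sigma<-1$ (equivalently $\gamma'<\gamma$) to extract the power $T^{(\gamma-\gamma')/\sigma}$.
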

\begin{proof} By the Minkowski inequality,
\begin{align*}
\norm{|\nabla |^{\gamma'} \, T_t f}_{L^{\infty}_x L^2_T} & \leq \int_{\RR} \norm{t^{-\beta} |\xi |^{\gamma'-\alpha} \chi_{\{t|\xi |^{\sigma} > M\}}\, \widehat{f}(\xi)}_{L^2_T} \, d\xi \\
& \hspace{-1cm} = \int_{\RR} |\xi |^{\gamma'-\alpha} |\widehat{f}(\xi)| \chi_{\{|\xi |^{\sigma} > \frac{M}{T}\}} \left(\int_{\frac{M}{|\xi |^{\sigma}}}^T t^{-2\beta} \, dt\right)^{\frac{1}{2}} \, d\xi \\
& \hspace{-1cm} =\int_{\RR} |\xi |^{\gamma'-\alpha} |\widehat{f}(\xi)| \chi_{\{|\xi |^{\sigma} > \frac{M}{T}\}} \left[ \frac{T^{1-2\beta}}{1-2\beta} - \frac{1}{1-2\beta} \left(\frac{M}{|\xi |^{\sigma}}\right)^{1-2\beta} \right]^{\frac{1}{2}} \, d\xi .
\end{align*}
We shall see later that the assumption $\beta >\frac{1}{2}$ will happen naturally, so now observe that because of the characteristic function we have the following bound
\begin{align*}
\norm{|\nabla |^{\gamma'} \, T_t f}_{L^{\infty}_x L^2_T} & \lesssim_{M, \beta} \int_{\RR} |\xi |^{\gamma'-\alpha} |\widehat{f}(\xi)| \chi_{\{|\xi |^{\sigma} > \frac{M}{T}\}} |\xi |^{\alpha-\frac{\sigma}{2} } \, d\xi \\
& = \int_{\RR} |\xi |^{\gamma'-\frac{\sigma}{2}} |\widehat{f}(\xi)| \chi_{\{|\xi |^{\sigma} > \frac{M}{T}\}} \, d\xi \\
& \leq \norm{f}_{L^2_x} \, \left(\int_{\RR} |\xi |^{2\gamma'-\sigma}\chi_{\{|\xi |^{\sigma} > \frac{M}{T}\}} \, d\xi\right)^{\frac{1}{2}} .
\end{align*}
The condition $2\gamma'-\sigma<-1$ is necessary to ensure integrability in the previous inequality.
\end{proof}

Finally we look at what happens for small frequencies. 

\begin{proposition} For any $0\leq \gamma' <\gamma=\frac{\sigma-1}{2}$, the operator $U_t$ defined in \cref{Loperators} satisfies
\[\norm{|\nabla |^{\gamma'} \, U_t f}_{L^{\infty}_x L^2_T}\lesssim T^{\frac{\gamma - \gamma'}{\sigma}} \norm{f}_{L^2_x} . \]
\end{proposition}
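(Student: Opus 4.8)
\emph{Proof proposal.} The plan is to follow the same scheme as in the preceding proposition for the operator $T_t$, the only new ingredient being a uniform bound on the Mittag-Leffler factor. First I would observe that on the support of the cutoff $\chi_{\{t|\xi|^{\sigma}\leq M\}}$ one has $t^{\beta}|\xi|^{\alpha}=(t|\xi|^{\sigma})^{\beta}\leq (2M)^{\beta}$, so that the argument $i^{-\beta}t^{\beta}|\xi|^{\alpha}$ of the Mittag-Leffler function stays in a fixed compact subset of $\CC$, namely the segment $\{e^{-i\beta\pi/2}r : 0\leq r\leq (2M)^{\beta}\}$. Since $E_{\beta}$ is entire, it is bounded on this segment, so $|E_{\beta}(i^{-\beta}t^{\beta}|\xi|^{\alpha})|\leq C_{M,\beta}$ uniformly in $t,\xi$ on the relevant set. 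Consequently, by the Minkowski inequality,
\[
\norm{|\nabla|^{\gamma'}U_t f}_{L^{\infty}_x L^2_T}\leq \int_{\RR}|\xi|^{\gamma'}|\widehat{f}(\xi)|\,\norm{E_{\beta}(i^{-\beta}t^{\beta}|\xi|^{\alpha})\,\chi_{\{t|\xi|^{\sigma}\leq M\}}}_{L^2_T}\,d\xi \lesssim_{M,\beta}\int_{\RR}|\xi|^{\gamma'}|\widehat{f}(\xi)|\,\norm{\chi_{\{t|\xi|^{\sigma}\leq 2M\}}}_{L^2_T}\,d\xi .
\]

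Next I would split the $\xi$-integral according to whether $|\xi|^{\sigma}\leq M/T$ or $|\xi|^{\sigma}>M/T$. On the first region, $t|\xi|^{\sigma}\leq M$ for all $t\in[0,T]$, so the cutoff equals $1$ there and $\norm{\chi}_{L^2_T}\lesssim T^{1/2}$; on the second region the cutoff forces $t\lesssim M|\xi|^{-\sigma}$, so $\norm{\chi}_{L^2_T}\lesssim M^{1/2}|\xi|^{-\sigma/2}$. Applying Cauchy--Schwarz to the low-frequency piece gives $T^{1/2}\big(\int_{|\xi|\leq (M/T)^{1/\sigma}}|\xi|^{2\gamma'}\,d\xi\big)^{1/2}\norm{f}_{L^2_x}$; since $\gamma'\geq 0$ the inner integral converges at the origin and is comparable to $(M/T)^{(2\gamma'+1)/\sigma}$, and collecting powers of $T$ yields exactly $T^{(\gamma-\gamma')/\sigma}\norm{f}_{L^2_x}$ after using $\gamma-\gamma'=\frac{\sigma-1-2\gamma'}{2}$. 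Applying Cauchy--Schwarz to the high-frequency piece gives $\big(\int_{|\xi|>(M/T)^{1/\sigma}}|\xi|^{2\gamma'-\sigma}\,d\xi\big)^{1/2}\norm{f}_{L^2_x}$, and here the inner integral converges at infinity precisely because $\gamma'<\gamma$ forces $2\gamma'-\sigma<-1$; evaluating it produces the same factor $T^{(\gamma-\gamma')/\sigma}$.

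I expect no genuine obstacle: the argument is essentially bookkeeping of exponents, and the one place where the strict inequality $\gamma'<\gamma$ is used is the integrability at infinity of $|\xi|^{2\gamma'-\sigma}$ over the high-frequency region, mirroring the condition $2\gamma'-\sigma<-1$ that appeared for $T_t$. The only analytic input beyond that is the entireness of $E_{\beta}$, which we already know, and which suffices because the defining cutoff confines the Mittag-Leffler argument to a bounded range.
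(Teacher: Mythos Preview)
Your proposal is correct and follows essentially the same approach as the paper: both bound the Mittag-Leffler factor using that its argument stays in a compact set on the support of the cutoff, apply Minkowski, split the $\xi$-integral according to whether $T|\xi|^{\sigma}$ is above or below $M$, and then use Cauchy--Schwarz on each piece with the same integrability condition $2\gamma'-\sigma<-1$. The only cosmetic difference is that the paper phrases the boundedness of $E_{\beta}$ via absolute convergence of its defining series rather than via entireness, which is of course equivalent.
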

\begin{proof} Let us remember the definition of this operator
\[U_t f(x) = \int_{\RR} E_{\beta} (i^{-\beta}t^{\beta} |\xi |^{\alpha})\, \chi_{\{t|\xi |^{\sigma} \leq M\}} \widehat{f}(\xi) \, e^{ix\xi} d\xi ,\]
where $\sigma=\frac{\alpha}{\beta}$. Note that the series defining the function $E_{\beta}$ is absolutely convergent in the support of $\chi_{\{t|\xi |^{\sigma} \leq M\}}$, see \cref{ML}. Using this fact together with the Minkowski inequality we find
\begin{align*}
\norm{|\nabla |^{\gamma'} \, U_t f}_{L^{\infty}_x L^2_T} \lesssim & \int_{\RR} |\widehat{f}(\xi)| |\xi |^{\gamma'} \left( \int_0^{\min (T, \frac{M}{|\xi |^{\sigma}})} dt\right)^{\frac{1}{2}} \, d\xi \\
 & \hspace{-1.5cm} \lesssim \int_{\RR} |\widehat{f}(\xi)| |\xi |^{\gamma'-\frac{\sigma}{2}} \chi_{\{T |\xi |^{\sigma} > M\}} \, d\xi 
 + T^{\frac{1}{2}} \int_{\RR} |\widehat{f}(\xi)| |\xi |^{\gamma'} \chi_{\{T |\xi |^{\sigma} \leq M\}} \, d\xi \\
  & \hspace{-1.5cm} \lesssim \norm{f}_{L^2_x} \, \left( \int_{\RR} |\xi |^{2\gamma'-\sigma}\, \chi_{\{T |\xi |^{\sigma} > M\}} \, d\xi \right)^{\frac{1}{2}} \\
& \hspace{-1.4cm} +\norm{f}_{L^2_x} \, T^{\frac{1}{2}} \, \left( \int_{\RR} |\xi |^{2\gamma'} \chi_{\{T |\xi |^{\sigma} \leq M\}} \, d\xi \right)^{\frac{1}{2}} ,
\end{align*}
Again, one sees that the condition $2\gamma' - \sigma < -1$ is required to ensure integrability, and the exponent in $T$ coincides for both integrals.
\end{proof}

The same argument proves the following result:

\begin{proposition} For any $0\leq \gamma' <\gamma=\frac{\sigma-1}{2}$, the operator $S_t$ defined in \cref{Loperators} satisfies
\[\norm{|\nabla |^{\gamma'} \, S_t f}_{L^{\infty}_x L^2_T}\lesssim T^{\frac{\gamma - \gamma'}{\sigma}} \norm{f}_{L^2_x} . \]
\end{proposition}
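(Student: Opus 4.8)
The plan is to repeat almost verbatim the argument just given for the operator $U_t$; the only structural fact we use about $S_t$ is that its multiplier $e^{-it|\xi|^\sigma}$ has modulus one, hence is bounded by $1$ on the support of the cutoff $\chi_{\{t|\xi|^\sigma\le M\}}$. Writing
\[
|\nabla|^{\gamma'}S_t f(x)=\int_\RR |\xi|^{\gamma'}e^{-it|\xi|^\sigma}\chi_{\{t|\xi|^\sigma\le M\}}\widehat f(\xi)\,e^{ix\xi}\,d\xi,
\]
I would first apply Minkowski's integral inequality to move the $L^2_T$ norm inside the $\xi$-integral; since $|e^{ix\xi}|=1$, the dependence on $x$ disappears once absolute values are taken inside, so the resulting bound also controls the $L^\infty_x L^2_T$ norm.

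Next I would estimate the time factor. As $\chi_{\{t|\xi|^\sigma\le M\}}$ is supported in $\{t|\xi|^\sigma\le 2M\}$, one has
\[
\norm{e^{-it|\xi|^\sigma}\chi_{\{t|\xi|^\sigma\le M\}}}_{L^2_T}\le\Big(\int_0^{\min(T,\,2M|\xi|^{-\sigma})}dt\Big)^{1/2}=\min\big(T,\,2M|\xi|^{-\sigma}\big)^{1/2},
\]
and I would split the $\xi$-integral into the high-frequency region $\{T|\xi|^\sigma>M\}$, where this quantity is $\lesssim_M|\xi|^{-\sigma/2}$, and the low-frequency region $\{T|\xi|^\sigma\le M\}$, where it is $\le T^{1/2}$ --- exactly the dichotomy in the proof for $U_t$. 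Applying the Cauchy--Schwarz inequality in $\xi$ against $\widehat f$ in each region gives
\[
\norm{|\nabla|^{\gamma'}S_t f}_{L^\infty_x L^2_T}\lesssim_M\norm{f}_{L^2_x}\Big(\int_{\{T|\xi|^\sigma>M\}}|\xi|^{2\gamma'-\sigma}\,d\xi\Big)^{1/2}+T^{1/2}\norm{f}_{L^2_x}\Big(\int_{\{T|\xi|^\sigma\le M\}}|\xi|^{2\gamma'}\,d\xi\Big)^{1/2}.
\]

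It then remains to evaluate the two integrals. The first converges precisely when $2\gamma'-\sigma<-1$, that is, when $\gamma'<\gamma=\frac{\sigma-1}{2}$, and its square root is a constant multiple of $T^{(\sigma-1-2\gamma')/(2\sigma)}$; the second converges at the origin because $\gamma'\ge 0$, with square root a constant multiple of $T^{-(2\gamma'+1)/(2\sigma)}$, so that after multiplying by $T^{1/2}$ it too equals a constant times $T^{(\sigma-1-2\gamma')/(2\sigma)}=T^{(\gamma-\gamma')/\sigma}$. This yields the claimed bound. There is no real obstacle here: the only points requiring care are the bookkeeping of the two frequency regimes and the observation that the condition $\gamma'<\gamma$ is exactly what makes the high-frequency integral integrable, both of which are identical to the $U_t$ case and, ultimately, to Theorem 3.5 in \cite{KPV}.
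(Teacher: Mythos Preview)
Your proof is correct and follows exactly the approach the paper intends: the paper itself says ``The same argument proves the following result,'' referring to the proof for $U_t$, and your argument is precisely that proof with the trivial simplification that the multiplier $e^{-it|\xi|^{\sigma}}$ is bounded by $1$ on the support of $\chi$ (in place of the absolute convergence of the Mittag--Leffler series used for $U_t$). The splitting into the two frequency regimes, the use of Cauchy--Schwarz, and the verification that both pieces yield the same power $T^{(\gamma-\gamma')/\sigma}$ are all identical to the $U_t$ case.
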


We now rewrite \cref{smoothingprop,smoothingprop2} in a clearer way for the purpose of estimating what will be the nonlinear part of the equation.

\begin{proposition} Let $\tilde{\gamma}=\alpha - \frac{\sigma +1}{2}$. Then 
\begin{align*}
\norm{|\nabla |^{\tilde{\gamma}} |\nabla |^{\sigma - \alpha} e^{-it |\nabla |^{\sigma} } f}_{L^{\infty}_x L^2_t}& \lesssim \norm{f}_{L^2_x} , \\
\norm{|\nabla |^{\tilde{\gamma}} \int_0^t |\nabla |^{\sigma - \alpha} e^{-i(t-t') |\nabla |^{\sigma} } G(t',x)  \, dt' }_{L^{\infty}_x L^2_T} & \lesssim \norm{G}_{L^1_T L^2_x} . 
\end{align*}
\end{proposition}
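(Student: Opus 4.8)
The plan is to recognize that this proposition is nothing more than \cref{smoothingprop,smoothingprop2} rewritten after a bookkeeping of homogeneous derivatives. Since $|\nabla|^{\tilde\gamma}$, $|\nabla|^{\sigma-\alpha}$ and $e^{-it|\nabla|^\sigma}$ are all Fourier multipliers in $x$, they commute and compose by addition of symbols, so $|\nabla|^{\tilde\gamma}|\nabla|^{\sigma-\alpha}=|\nabla|^{\tilde\gamma+\sigma-\alpha}$. The one algebraic step to record is that
\[
\tilde\gamma+\sigma-\alpha=\Big(\alpha-\frac{\sigma+1}{2}\Big)+\sigma-\alpha=\frac{\sigma-1}{2}=\gamma .
\]
I would note in passing that under the standing hypothesis $\alpha>\frac{\sigma+1}{2}$ one has $\tilde\gamma\ge 0$, while $\sigma>\alpha$ (see the remark following \cref{maintheorem}) gives $\sigma-\alpha>0$; hence all the powers involved are nonnegative and the composition law is unambiguous on, e.g., Schwartz functions with Fourier transform vanishing near the origin, the general case following by density exactly as in \cref{smoothingprop,smoothingprop2}.

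With this in hand, the first estimate follows by substituting $|\nabla|^{\tilde\gamma}|\nabla|^{\sigma-\alpha}=|\nabla|^\gamma$ and applying \cref{smoothingprop} to $f$. For the second, the same substitution reduces the left-hand side to
\[
\norm{|\nabla|^{\gamma}\int_0^t e^{-i(t-t')|\nabla|^{\sigma}}G(t',x)\,dt'}_{L^\infty_x L^2_T},
\]
which is precisely the quantity bounded by $\norm{G}_{L^1_T L^2_x}$ in \cref{smoothingprop2}. This closes the argument.

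There is essentially no obstacle here: the substance of the smoothing effect already lives in \cref{smoothingprop,smoothingprop2}, and the only verification is the identity $\tilde\gamma+(\sigma-\alpha)=\gamma$, which is exactly what makes $\tilde\gamma$ the natural ``gain in the nonlinear part''. The only mildly delicate — but routine — point worth a sentence is the legitimacy of moving $|\nabla|^{\sigma-\alpha}$ past the oscillatory factor and past the time integral; this is immediate since everything is a Fourier multiplier in $x$ acting at each fixed time, so the Plancherel and duality manipulations of \cref{smoothingprop,smoothingprop2} go through verbatim with $|\xi|^{\gamma}$ replaced by $|\xi|^{\tilde\gamma}|\xi|^{\sigma-\alpha}$.
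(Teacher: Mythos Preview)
Your proposal is correct and matches the paper's own approach exactly: the paper presents this proposition without a separate proof, introducing it with the sentence ``We now rewrite \cref{smoothingprop,smoothingprop2} in a clearer way for the purpose of estimating what will be the nonlinear part of the equation,'' which is precisely the observation $\tilde\gamma+(\sigma-\alpha)=\gamma$ that you spell out.
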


Note that the only way to gain derivatives is if $\tilde{\gamma}>0$, which in particular implies that $\alpha >1$ and $\beta>\frac{1}{2}$.

Similarly, we need the analog of this result for the other multiplier operators taking part in the nonlinear piece.

\begin{proposition}\label{need} For any $0\leq \tilde{\gamma}' <\tilde{\gamma}=\alpha - \frac{\sigma +1}{2}$ the operator $\tilde{T}_t$ defined in \cref{NLoperators} satisfies
\[\norm{|\nabla |^{\tilde{\gamma}'}\int_0^t  \, \tilde{T}_{t-t'} G(t',x) \, dt'}_{L^{\infty}_x L^2_T}\lesssim T^{\frac{\tilde{\gamma} - \tilde{\gamma}'}{\sigma}} \norm{G}_{L^1_T L^2_x} . \]
\end{proposition}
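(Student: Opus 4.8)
The plan is to reduce everything to a one‑variable high‑frequency estimate by applying Minkowski's inequality twice and then computing the resulting kernel integral explicitly. Unlike in \cref{smoothingprop2}, the operator $\tilde T_t$ is not built from a unitary group, so rather than a duality argument I would proceed directly. First, for fixed $x$, Minkowski's integral inequality in the $t'$ variable gives
\[\left\| |\nabla|^{\tilde\gamma'}\int_0^t \tilde T_{t-t'}G(t',\cdot)(x)\,dt' \right\|_{L^2_T} \le \int_0^T \left\| |\nabla|^{\tilde\gamma'}\tilde T_{t-t'}G(t',\cdot)(x)\,\chi_{\{t>t'\}} \right\|_{L^2_T}\,dt',\]
and since the right-hand side is monotone one may take $\sup_x$ inside the $t'$-integral. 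Then, using that the Fourier kernel $e^{ix\xi}$ has modulus one, for each fixed $t'$ we have the pointwise-in-$x$ bound $|(|\nabla|^{\tilde\gamma'}\tilde T_{t-t'}G(t',\cdot))(x)|\le \int_{\RR}(t-t')^{-1-\beta}|\xi|^{\tilde\gamma'-2\alpha}\chi_{\{(t-t')|\xi|^\sigma>M\}}|\widehat G(t',\xi)|\,d\xi$, so that a second application of Minkowski in the time variable $t$ reduces matters to estimating
\[\int_{\RR}|\xi|^{\tilde\gamma'-2\alpha}\,|\widehat G(t',\xi)|\,\left\| (t-t')^{-1-\beta}\chi_{\{(t-t')|\xi|^\sigma>M\}}\right\|_{L^2_t([t',T])}\,d\xi .\]

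Next I would compute the inner $L^2_t$ norm. Substituting $s=t-t'$, the characteristic function forces $s>M|\xi|^{-\sigma}$, so the integral runs over the finite interval $(M|\xi|^{-\sigma},\,T-t')$ on which $s^{-2-2\beta}$ is continuous; discarding the (nonnegative) contribution of the upper endpoint yields $\left\| (t-t')^{-1-\beta}\chi\right\|_{L^2_t}\lesssim_{M,\beta} |\xi|^{\sigma(1+2\beta)/2}=|\xi|^{\sigma/2+\alpha}$, using $\sigma\beta=\alpha$. Since the characteristic function can be active only when $(T-t')|\xi|^\sigma>M$, I may insert $\chi_{\{|\xi|^\sigma>M/(T-t')\}}$ for free. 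The weight $|\xi|^{-2\alpha}$ then partially cancels, leaving
\[\int_{\{|\xi|^\sigma>M/(T-t')\}}|\xi|^{\tilde\gamma'-\alpha+\sigma/2}\,|\widehat G(t',\xi)|\,d\xi \;\le\; \norm{G(t',\cdot)}_{L^2_x}\left(\int_{\{|\xi|^\sigma>M/(T-t')\}}|\xi|^{2\tilde\gamma'-2\alpha+\sigma}\,d\xi\right)^{1/2}\]
by Cauchy--Schwarz (and Plancherel).

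Finally I would use the defining relation $\tilde\gamma=\alpha-\frac{\sigma+1}{2}$, which gives $2\tilde\gamma'-2\alpha+\sigma=2(\tilde\gamma'-\tilde\gamma)-1<-1$ precisely because $\tilde\gamma'<\tilde\gamma$; hence the last integral converges, and since its domain is $\{|\xi|>(M/(T-t'))^{1/\sigma}\}$ it evaluates, up to a constant depending on $M,\tilde\gamma,\tilde\gamma'$, to a multiple of $(T-t')^{2(\tilde\gamma-\tilde\gamma')/\sigma}$. Taking square roots, bounding $(T-t')^{(\tilde\gamma-\tilde\gamma')/\sigma}\le T^{(\tilde\gamma-\tilde\gamma')/\sigma}$ (the exponent being positive), and integrating the bound $\lesssim T^{(\tilde\gamma-\tilde\gamma')/\sigma}\norm{G(t',\cdot)}_{L^2_x}$ over $t'\in[0,T]$ produces exactly $T^{(\tilde\gamma-\tilde\gamma')/\sigma}\norm{G}_{L^1_T L^2_x}$. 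The computation is essentially routine; the only real point of care is the exponent bookkeeping — verifying that the positive power $|\xi|^{\sigma/2+\alpha}$ from the singular time integral beats the negative power $|\xi|^{-2\alpha}$, and that the strict inequality $\tilde\gamma'<\tilde\gamma$ is simultaneously what yields integrability at high frequency and the positive power of $T$ — together with checking that the cutoff $\{(t-t')|\xi|^\sigma>M\}$ is converted to $\{|\xi|^\sigma>M/(T-t')\}$ in the correct direction.
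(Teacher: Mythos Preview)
Your proposal is correct and follows essentially the same route as the paper: Minkowski to pull the $L^2_T$ norm inside the $t'$- and $\xi$-integrals (using $|e^{ix\xi}|=1$ to dispose of $L^\infty_x$), explicit evaluation of $\|(t-t')^{-1-\beta}\chi_{\{(t-t')|\xi|^\sigma>M\}}\|_{L^2_t}\lesssim |\xi|^{\sigma/2+\alpha}$, then Cauchy--Schwarz in $\xi$ over $\{|\xi|^\sigma>M/(T-t')\}$ to extract $(T-t')^{(\tilde\gamma-\tilde\gamma')/\sigma}$. The exponent bookkeeping you flag is exactly the content of the paper's computation, and your observation that the strict inequality $\tilde\gamma'<\tilde\gamma$ simultaneously gives high-frequency integrability and the positive power of $T$ matches the paper's implicit use of $2\tilde\gamma'-2\alpha+\sigma<-1$.
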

\begin{proof} By the Minkowski inequality,
\begin{align}
\left\lVert |\nabla |^{\tilde{\gamma}'}\right. & \left.\int_0^t \tilde{T}_{t-t'} G(t',x) \, dt'\right\rVert_{L^{\infty}_x L^2_T} \nonumber \\
 &  \leq \int_0^T \int_{\RR} |\xi |^{\tilde{\gamma}'-2\alpha}\, |\widehat{G}(t',\xi)|\, \norm{ \frac{\chi_{\{t'\leq t\}}}{(t-t')^{1+\beta}} \, \chi_{\{(t-t')|\xi |^{\sigma} >M\}} }_{L^2_t ([0,T])} \, d\xi dt' \nonumber \\
 &  \lesssim \int_0^T \int_{\RR} |\xi |^{\tilde{\gamma}'-2\alpha}\, |\widehat{G}(t',\xi)|\, \chi_{\{(T-t')|\xi |^{\sigma} >M\}} \left( \frac{|\xi |^{\sigma + 2\alpha}}{M^{1+2\beta}} - \frac{1}{(T-t')^{1+2\beta}}\right)^{\frac{1}{2}} \, d\xi dt' \label{nonhom}\\
&  \lesssim \int_0^T \int_{\RR} |\xi |^{\tilde{\gamma}'-\alpha+\frac{\sigma}{2}}\, |\widehat{G}(t',\xi)|\, \chi_{\{(T-t')|\xi |^{\sigma} >M\}} d\xi dt' \nonumber \\
&  \lesssim \int_0^T \norm{G(t)}_{L^2_x} \, \left( \int_{\RR} |\xi |^{2\tilde{\gamma}'-2\alpha+\sigma}\,\chi_{\{(T-t')|\xi |^{\sigma} >M\}} d\xi \right)^{\frac{1}{2}} \, dt' \nonumber \\
&  \lesssim \int_0^T \norm{G(t)}_{L^2_x} \, (T-t')^{ \frac{\tilde{\gamma} - \tilde{\gamma}'}{\sigma}} dt' \leq T^{\frac{\tilde{\gamma} - \tilde{\gamma}'}{\sigma}} \norm{G}_{L^1_T L^2_x} .\nonumber 
\end{align}
\end{proof}

The proofs of the following results are analogous to the ones before and so we will omit them.

\begin{proposition} For any $0\leq \tilde{\gamma}' <\tilde{\gamma}=\alpha - \frac{\sigma +1}{2}$ the operator $\tilde{U}_t$ defined in \cref{NLoperators} satisfies
\[\norm{|\nabla |^{\tilde{\gamma}'}\int_0^t  \, \tilde{U}_{t-t'} G(t',x) \, dt'}_{L^{\infty}_x L^2_T}\lesssim T^{\frac{\tilde{\gamma} - \tilde{\gamma}'}{\sigma}} \norm{G}_{L^1_T L^2_x} . \]
\end{proposition}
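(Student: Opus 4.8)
The plan is to mirror the proof of \cref{need} almost verbatim, since the operator $\tilde{U}_t$ differs from $\tilde{T}_t$ only in that its symbol, $t^{\beta-1} E_{\beta,\beta}(i^{-\beta} t^\beta |\xi|^\alpha) \, \chi_{\{t|\xi|^\sigma \leq M\}}$, is supported where $t|\xi|^\sigma \leq 2M$ rather than where $t|\xi|^\sigma > M$. First I would recall that on the support of $\chi_{\{t|\xi|^\sigma \leq M\}}$ the power series \cref{genML} defining $E_{\beta,\beta}$ is absolutely convergent with bound uniform in $(t,\xi)$ on that set, so that $|t^{\beta-1} E_{\beta,\beta}(i^{-\beta} t^\beta |\xi|^\alpha)| \lesssim t^{\beta-1}$ there (using $\beta<1$ so this is the relevant singular factor near $t=0$, which is integrable since $\beta>0$). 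Then, applying the Minkowski inequality exactly as in \cref{need}, I would bound
\[
\norm{|\nabla|^{\tilde\gamma'} \int_0^t \tilde U_{t-t'} G(t',x)\,dt'}_{L^\infty_x L^2_T} \lesssim \int_0^T \int_\RR |\xi|^{\tilde\gamma'} |\widehat G(t',\xi)| \,\norm{(t-t')^{\beta-1}\chi_{\{t'\leq t\}}\chi_{\{(t-t')|\xi|^\sigma \leq 2M\}}}_{L^2_t([0,T])}\,d\xi\,dt'.
\]

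The inner $L^2_t$ norm is estimated by splitting the $t$-integration region according to whether $(t-t')|\xi|^\sigma \leq 2M$ forces $t-t' \leq 2M|\xi|^{-\sigma}$: since $\beta > \frac12$ one has $2\beta - 2 > -1$, so $\int_0^{2M|\xi|^{-\sigma}} s^{2\beta-2}\,ds \lesssim_{M,\beta} |\xi|^{-\sigma(2\beta-1)}$, hence the inner norm is $\lesssim |\xi|^{-\sigma(\beta - \frac12)} = |\xi|^{-\frac{\sigma}{2}}|\xi|^{-\sigma(\beta-1)}$. Wait — more carefully, $\sigma(\beta - \tfrac12) = \tfrac{\sigma}{2}(2\beta-1)$; using $\sigma = \alpha/\beta$ one checks this gain matches what is needed. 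After inserting this, the $\xi$-integral becomes (up to the $\chi$ constraint $T|\xi|^\sigma \lesssim M$ giving a power of $T$, or no constraint on the complementary piece)
\[
\int_0^T \norm{G(t')}_{L^2_x} \Big(\int_\RR |\xi|^{2\tilde\gamma' - \sigma(2\beta-1)}\,\chi_{\{\cdots\}}\,d\xi\Big)^{\frac12}\,dt',
\]
and one verifies that $\sigma(2\beta-1) = 2\alpha - \sigma = 2(\alpha - \tfrac{\sigma+1}{2}) + 1 = 2\tilde\gamma + 1$, so the exponent is $2\tilde\gamma' - 2\tilde\gamma - 1 < -1$ exactly when $\tilde\gamma' < \tilde\gamma$, which is our hypothesis; this is what guarantees integrability and produces the factor $T^{(\tilde\gamma-\tilde\gamma')/\sigma}$ just as in \cref{need}. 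Concluding with Minkowski (or H\"older) in $t'$ gives the claimed bound $\lesssim T^{\frac{\tilde\gamma-\tilde\gamma'}{\sigma}}\norm{G}_{L^1_T L^2_x}$.

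The main obstacle — though a mild one — is the integrable singularity $(t-t')^{\beta-1}$ near the diagonal $t=t'$, which is absent in the $\tilde T_t$ case; this is precisely why the condition $\beta > \frac12$ resurfaces, since without it the $L^2_t$ norm of $(t-t')^{\beta-1}$ over a small interval would diverge. One must also be slightly careful that the cutoff $\chi_{\{t|\xi|^\sigma \leq M\}}$ is the \emph{smooth} cutoff supported on $t|\xi|^\sigma \leq 2M$, not the sharp one, but since we only use the crude pointwise bound $|\chi|\leq 1$ and the support property, this causes no difficulty. Everything else is a routine repetition of the computation in \cref{need}, which is why the authors are justified in omitting it.
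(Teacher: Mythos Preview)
Your proposal is correct and follows exactly the route the paper intends: the authors omit the proof, stating it is analogous to that of \cref{need}, and your argument is precisely that analogy, with the bounded Mittag--Leffler factor replaced by the pointwise bound $|E_{\beta,\beta}(i^{-\beta}t^\beta|\xi|^\alpha)|\lesssim 1$ on the support of $\chi_{\{t|\xi|^\sigma\leq M\}}$ (cf.\ the proof for $U_t$). Your identification of where $\beta>\tfrac12$ enters---to make $(t-t')^{\beta-1}\in L^2_t$ on the short interval $[0,2M|\xi|^{-\sigma}]$---and your algebraic check $\sigma(2\beta-1)=2\tilde\gamma+1$ are both on point; the only cosmetic slack is that the case split on the $\xi$-integral (according to whether $(T-t')|\xi|^\sigma$ exceeds $2M$) could be spelled out more explicitly, but both pieces yield the same power $(T-t')^{(\tilde\gamma-\tilde\gamma')/\sigma}$, as in the paper's treatment of $U_t$.
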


\begin{proposition} For any $0\leq \tilde{\gamma}' <\tilde{\gamma}=\alpha - \frac{\sigma +1}{2}$ the operator $\tilde{S}_t$ defined in \cref{NLoperators} satisfies
\[\norm{|\nabla |^{\tilde{\gamma}'}\int_0^t  \, \tilde{S}_{t-t'} G(t',x) \, dt'}_{L^{\infty}_x L^2_T}\lesssim T^{\frac{\tilde{\gamma} - \tilde{\gamma}'}{\sigma}} \norm{G}_{L^1_T L^2_x} . \]
\end{proposition}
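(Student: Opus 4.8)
The plan is to mimic the proof of \cref{need} almost verbatim. The operator $\tilde{S}_t$ from \cref{NLoperators} differs from $\tilde{T}_t$ only in that its Fourier multiplier carries an oscillatory factor $e^{-it|\xi|^{\sigma}}$ of modulus one together with a cutoff to \emph{low} frequencies, $\chi_{\{t|\xi|^{\sigma}\leq M\}}$, rather than the monotone high-frequency multiplier $t^{-1-\beta}|\xi|^{-2\alpha}\chi_{\{t|\xi|^{\sigma}>M\}}$. Since $|e^{-it|\xi|^{\sigma}}|=1$ on the support of $\chi$, the oscillation is irrelevant once one passes to absolute values, and the Minkowski-inequality argument of \cref{need} carries over.

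Concretely, I would first apply the Minkowski inequality in the $t'$ variable to reduce to a fixed-time bound, and then apply Minkowski again for each fixed $t'$ (moving the $L^2_t$ norm inside the frequency integral) while using $|\chi_{\{s|\xi|^{\sigma}\leq M\}}|\lesssim 1$ with support in $\{s|\xi|^{\sigma}\leq 2M\}$, so that the resulting $L^2_t$-norm of the time cutoff is just the square root of the measure $\min(T-t',\,2M|\xi|^{-\sigma})$ of its support. This leads to
\begin{multline*}
\norm{|\nabla|^{\tilde{\gamma}'}\int_0^t\tilde{S}_{t-t'}G(t',x)\,dt'}_{L^{\infty}_xL^2_T}\\
\lesssim_M\int_0^T\int_{\RR}|\xi|^{\tilde{\gamma}'+\sigma-\alpha}\,|\widehat{G}(t',\xi)|\left(\int_0^{\min(T-t',\,2M|\xi|^{-\sigma})}ds\right)^{1/2}d\xi\,dt'.
\end{multline*}

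Now I would split the $\xi$-integral at $|\xi|^{\sigma}=M/(T-t')$. On $\{|\xi|^{\sigma}>M/(T-t')\}$ the inner time integral contributes $|\xi|^{-\sigma/2}$, and Cauchy--Schwarz in $\xi$ leaves $\norm{G(t',\cdot)}_{L^2_x}$ times $\big(\int_{|\xi|^{\sigma}>M/(T-t')}|\xi|^{2\tilde{\gamma}'+\sigma-2\alpha}\,d\xi\big)^{1/2}$; the exponent satisfies $2\tilde{\gamma}'+\sigma-2\alpha<2\tilde{\gamma}+\sigma-2\alpha=-1$ — this is exactly where $\tilde{\gamma}'<\tilde{\gamma}=\alpha-\frac{\sigma+1}{2}$ enters — so the integral converges and produces the weight $(T-t')^{(\tilde{\gamma}-\tilde{\gamma}')/\sigma}$. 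On the complementary region the time integral contributes $(T-t')^{1/2}$; since $\sigma>\alpha$ (because $\beta<1$) and $\tilde{\gamma}'\geq 0$ the exponent $\tilde{\gamma}'+\sigma-\alpha$ is positive, so there is no singularity at the origin, and the same computation with the extra factor $(T-t')^{1/2}$ again yields precisely $(T-t')^{(\tilde{\gamma}-\tilde{\gamma}')/\sigma}\norm{G(t',\cdot)}_{L^2_x}$. Integrating $\int_0^T(T-t')^{(\tilde{\gamma}-\tilde{\gamma}')/\sigma}\norm{G(t',\cdot)}_{L^2_x}\,dt'\leq T^{(\tilde{\gamma}-\tilde{\gamma}')/\sigma}\norm{G}_{L^1_TL^2_x}$ then gives the claim.

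I do not anticipate a genuine obstacle here: this is structurally identical to \cref{need}, which is why the paper omits the proof. The one point deserving care is the bookkeeping of the exponent of $T-t'$: one must verify that the powers coming from the high- and low-frequency pieces coincide (both equal $\frac{\tilde{\gamma}-\tilde{\gamma}'}{\sigma}>0$), which is forced by the identity $2\tilde{\gamma}=2\alpha-\sigma-1$. That same identity is why the inequality $\tilde{\gamma}'<\tilde{\gamma}$ must be strict: at the endpoint $\tilde{\gamma}'=\tilde{\gamma}$ the frequency integral over $\{|\xi|^{\sigma}>M/(T-t')\}$ diverges logarithmically, and one would instead have to invoke \cref{smoothingprop2}, at the cost of losing the power of $T$.
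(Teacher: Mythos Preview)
Your argument is correct and is exactly the computation the paper has in mind when it says the proof is ``analogous to the ones before'': you combine the Minkowski/Cauchy--Schwarz skeleton of \cref{need} with the low-frequency splitting already carried out for $U_t$ and $S_t$, and the exponent bookkeeping via $2\tilde{\gamma}=2\alpha-\sigma-1$ makes the two pieces match as $(T-t')^{(\tilde{\gamma}-\tilde{\gamma}')/\sigma}$. There is nothing to add.
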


Now it is time to put all these results together:

\begin{theorem}[Norm homogeneous version]\label{smooth} Let $0\leq \gamma' <\gamma=\frac{\sigma-1}{2}$, and $0\leq \tilde{\gamma}' <\tilde{\gamma}=\alpha - \frac{\sigma +1}{2}$. Then 
\begin{align*}
\norm{|\nabla|^{\gamma} e^{-it|\nabla |^{\sigma}} f}_{L^{\infty}_x L^2_T}& \lesssim \norm{f}_{L^2_x} ,\\
\norm{|\nabla|^{\gamma'} S_t f}_{L^{\infty}_x L^2_T} & \lesssim T^{\frac{\gamma - \gamma'}{\sigma}} \norm{f}_{L^2_x} ,\\
\norm{|\nabla|^{\gamma'} T_t f}_{L^{\infty}_x L^2_T} & \lesssim T^{\frac{\gamma - \gamma'}{\sigma}} \norm{f}_{L^2_x} ,\\
\norm{|\nabla|^{\gamma'} U_t f}_{L^{\infty}_x L^2_T} & \lesssim T^{\frac{\gamma - \gamma'}{\sigma}} \norm{f}_{L^2_x} .
\end{align*}
Additionally,
\begin{align*}
\norm{|\nabla|^{\tilde{\gamma}} \int_0^t |\nabla|^{\sigma-\alpha} e^{-i(t-t')|\nabla |^{\sigma}} G(t',x) dt'}_{L^{\infty}_x L^2_T}& \lesssim \norm{G}_{L^1_T L^2_x} ,\\
\norm{|\nabla|^{\tilde{\gamma}'} \int_0^t \tilde{S}_{t-t'} G(t',x) dt'}_{L^{\infty}_x L^2_T} & \lesssim T^{\frac{\tilde{\gamma} - \tilde{\gamma}'}{\sigma}} \norm{G}_{L^1_T L^2_x} ,\\
\norm{|\nabla|^{\tilde{\gamma}'} \int_0^t \tilde{T}_{t-t'} G(t',x) dt'}_{L^{\infty}_x L^2_T} & \lesssim T^{\frac{\tilde{\gamma} - \tilde{\gamma}'}{\sigma}} \norm{G}_{L^1_T L^2_x} ,\\
\norm{|\nabla|^{\tilde{\gamma}'} \int_0^t \tilde{U}_{t-t'} G(t',x) dt'}_{L^{\infty}_x L^2_T} & \lesssim T^{\frac{\tilde{\gamma} - \tilde{\gamma}'}{\sigma}} \norm{G}_{L^1_T L^2_x} .
\end{align*}
\end{theorem}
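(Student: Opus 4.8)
\emph{Proof proposal.} The plan is to observe that \cref{smooth} is purely a repackaging of the estimates established one by one in the propositions of this subsection, so that essentially no new argument is required beyond checking that the hypotheses line up. Concretely, I would run through the eight displayed inequalities in order and match each to its source, recording along the way the two bookkeeping points that make the identification exact: that the finite-time norm $\norm{\cdot}_{L^2_T}$ over $[0,T]$ is dominated by the full-line norm $\norm{\cdot}_{L^2_t}$ (by monotonicity of the integral), and that the standing parameter restrictions on $\gamma'$ and $\tilde\gamma'$ are precisely the frequency-integrability thresholds used in the individual proofs.

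For the homogeneous (linear) estimates: the first bound, $\norm{|\nabla|^{\gamma} e^{-it|\nabla|^{\sigma}} f}_{L^{\infty}_x L^2_T}\lesssim \norm{f}_{L^2_x}$, is \cref{smoothingprop}, with no power of $T$ to track because the exponent is $\gamma$ itself rather than $\gamma'$. The remaining three, for $S_t$, $T_t$ and $U_t$, are verbatim the three preceding propositions devoted to those operators; I would only note that the assumption $0\leq\gamma'<\gamma=\frac{\sigma-1}{2}$ is exactly the condition $2\gamma'-\sigma<-1$ those proofs need for the frequency integral to converge, and that each of those proofs already produces the stated gain $T^{(\gamma-\gamma')/\sigma}$.

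For the nonhomogeneous (nonlinear) estimates: the first one, with the oscillatory kernel $|\nabla|^{\sigma-\alpha}e^{-i(t-t')|\nabla|^{\sigma}}$ and gain $\tilde\gamma$, is \cref{smoothingprop2} after absorbing $|\nabla|^{\sigma-\alpha}$ into $|\nabla|^{\tilde\gamma}$ via the identity $\tilde\gamma+\sigma-\alpha=\gamma$ — this is precisely the rewriting recorded in the proposition stated just after \cref{smoothingprop2}. The last three, for $\tilde S_t$, $\tilde T_t$ and $\tilde U_t$, are respectively the preceding propositions for $\tilde S_t$ and $\tilde U_t$ together with \cref{need} for $\tilde T_t$; here I would record that $0\leq\tilde\gamma'<\tilde\gamma=\alpha-\frac{\sigma+1}{2}$ is exactly the integrability threshold $2\tilde\gamma'-2\alpha+\sigma<-1$ appearing in those proofs, and that the Minkowski-in-$t'$ step in each yields the same exponent $T^{(\tilde\gamma-\tilde\gamma')/\sigma}$.

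The upshot is that there is no genuine obstacle remaining at this point: the substantive content lives in the earlier propositions — the core being the smoothing estimate \cref{smoothingprop} (a change of variables followed by Plancherel) and its $TT^*$ dualization in \cref{smoothingprop2} — and \cref{smooth} is just the statement that all of these may be collected under a single set of hypotheses. If I were writing this up I would therefore simply say that the theorem follows by assembling the above propositions, adding at most one sentence to reconcile $L^2_t$ with $L^2_T$ and to point out that the conditions on $\gamma'$ and $\tilde\gamma'$ match the convergence conditions in each individual proof.
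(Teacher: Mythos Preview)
Your proposal is correct and matches the paper's approach exactly: the paper does not even supply a proof for \cref{smooth}, introducing it with the sentence ``Now it is time to put all these results together,'' so the theorem is indeed nothing more than a compilation of the preceding propositions. Your additional bookkeeping remarks (the $L^2_t$ versus $L^2_T$ domination and the translation of the hypotheses on $\gamma',\tilde\gamma'$ into the frequency-integrability thresholds) are accurate and, if anything, make the logical structure more explicit than the paper itself does.
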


\begin{theorem}[Norm nonhomogeneous version]\label{thsmooth} Let $0\leq \gamma' <\gamma=\frac{\sigma-1}{2}$, and $0\leq \tilde{\gamma}' <\tilde{\gamma}=\alpha - \frac{\sigma +1}{2}$. Then 
\begin{align*}
\norm{\langle\nabla\rangle^{\gamma} e^{-it|\nabla |^{\sigma}} f}_{L^{\infty}_x L^2_T}& \lesssim (1+T^{\frac{1}{2}}) \norm{f}_{L^2_x} ,\\
\norm{\langle\nabla\rangle^{\gamma'} S_t f}_{L^{\infty}_x L^2_T} & \lesssim (T^{\frac{\gamma - \gamma'}{\sigma}} +T^{\frac{1}{2}}) \norm{f}_{L^2_x} ,\\
\norm{\langle\nabla\rangle^{\gamma'} T_t f}_{L^{\infty}_x L^2_T} & \lesssim (T^{\frac{\gamma - \gamma'}{\sigma}} +T^{\frac{1}{2}}) \norm{f}_{L^2_x} ,\\
\norm{\langle\nabla\rangle^{\gamma'} U_t f}_{L^{\infty}_x L^2_T} & \lesssim (T^{\frac{\gamma - \gamma'}{\sigma}} +T^{\frac{1}{2}}) \norm{f}_{L^2_x} .
\end{align*}
Moreover,
\begin{align*}
\norm{\langle\nabla\rangle^{\tilde{\gamma}} \int_0^t |\nabla|^{\sigma-\alpha} e^{-i(t-t')|\nabla |^{\sigma}} G(t',x) dt'}_{L^{\infty}_x L^2_T}& \lesssim (1 +T^{\frac{1}{2}}) \norm{G}_{L^1_T L^2_x} ,\\
\norm{\langle\nabla\rangle^{\tilde{\gamma}'} \int_0^t \tilde{S}_{t-t'} G(t',x) dt'}_{L^{\infty}_x L^2_T} & \lesssim (T^{\frac{\tilde{\gamma} - \tilde{\gamma}'}{\sigma}} +T^{\frac{1}{2}}) \norm{G}_{L^1_T L^2_x} ,\\
\norm{\langle\nabla\rangle^{\tilde{\gamma}'} \int_0^t \tilde{T}_{t-t'} G(t',x) dt'}_{L^{\infty}_x L^2_T} & \lesssim  T^{\frac{\tilde{\gamma} - \tilde{\gamma}'}{\sigma}} \, \norm{G}_{L^1_T L^2_x} ,\\
\norm{\langle\nabla\rangle^{\tilde{\gamma}'} \int_0^t \tilde{U}_{t-t'} G(t',x) dt'}_{L^{\infty}_x L^2_T} & \lesssim (T^{\frac{\tilde{\gamma} - \tilde{\gamma}'}{\sigma}} +T^{\frac{1}{2}}) \norm{G}_{L^1_T L^2_x}  .
\end{align*}
\end{theorem}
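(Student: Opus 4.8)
The plan is to deduce \cref{thsmooth} from \cref{smooth} by separating high and low frequencies, using the elementary bound
\[
\langle\xi\rangle^{a}\lesssim \mathbf 1_{\{|\xi|\le 1\}}+|\xi|^{a}\,\mathbf 1_{\{|\xi|> 1\}}\qquad(a\ge 0).
\]
For each of the eight estimates one writes the operator in question as its restriction to $\{|\xi|\le 1\}$ plus its restriction to $\{|\xi|>1\}$: the high-frequency piece will be handled by re-inspecting the homogeneous estimates, and the low-frequency piece by a direct argument that produces the extra factor $1+T^{1/2}$ (or, more generally, a positive power of $T$).

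On $\{|\xi|>1\}$ one has $\langle\xi\rangle^{a}\lesssim|\xi|^{a}$, so it suffices to revisit the proofs of \cref{smoothingprop,smoothingprop2,need} and of the propositions for $S_t,T_t,U_t,\tilde S_t,\tilde U_t$. Each of these is carried out at the level of pointwise-in-$\xi$ estimates — via Minkowski's inequality for the multiplier operators $S_t,T_t,U_t,\tilde S_t,\tilde T_t,\tilde U_t$, and via Plancherel in $t$ followed by Cauchy--Schwarz (together with duality for the Duhamel version) for the free evolution $e^{-it|\nabla|^{\sigma}}$ — so on $\{|\xi|>1\}$ one may simply replace $|\xi|^{a}$ by $\langle\xi\rangle^{a}\le 2^{a}|\xi|^{a}$ at no cost beyond a constant, reproducing the factors $1$, $T^{(\gamma-\gamma')/\sigma}$ and $T^{(\tilde\gamma-\tilde\gamma')/\sigma}$ on the right-hand sides.

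On $\{|\xi|\le 1\}$ one has $\langle\xi\rangle^{a}\lesssim 1$ and the Fourier support is compact, so Bernstein's inequality gives $\norm{\,\cdot\,}_{L^{\infty}_x}\lesssim\norm{\,\cdot\,}_{L^{2}_x}$, after which Plancherel and Cauchy--Schwarz in $\xi$ reduce matters to a uniform-in-$x$ bound by the relevant symbol. These symbols are all harmless on $\{|\xi|\le 1\}$: the multipliers of $S_t$ and $\tilde S_t$ are bounded by $1$ (using $\sigma>\alpha$ for $\tilde S_t$); that of $U_t$ is bounded because $E_{\beta}$ is entire and its argument has modulus $(t|\xi|^{\sigma})^{\beta}$, which stays in a bounded set on the support of the cutoff; that of $T_t$ is $\le M^{-\beta}$ because the cutoff forces $t^{-\beta}\le M^{-\beta}|\xi|^{\alpha}$; and that of $\tilde U_t$ is $O(t^{\beta-1})$, which is square-integrable near $t=0$ precisely because $\beta>\tfrac12$. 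Integrating in time over $[0,T]$ — by H\"older for $S_t,T_t,U_t$ and by Young's (or Minkowski's) inequality for the Duhamel terms — then yields a positive power of $T$ (namely $T^{1/2}$, or $T^{\beta-1/2}$ in the $\tilde U_t$ case) times $\norm{f}_{L^2_x}$ or $\norm{G}_{L^1_T L^2_x}$. The operator $\tilde T_t$ is the one exception: its low-frequency contribution is estimated exactly as in the proof of \cref{need}, where the support constraint $(t-t')|\xi|^{\sigma}>M$ converts the negative power of $t-t'$ into a positive power of $|\xi|$, the remaining $\xi$-integral converges because $2\tilde\gamma'-2\alpha+\sigma<-1$, and one gains the weight $(T-t')^{(\tilde\gamma-\tilde\gamma')/\sigma}$ directly — which is why that line carries no separate $T^{1/2}$. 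Summing the two contributions for each of the eight estimates finishes the proof.

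The only point requiring genuine care — and the reason one re-enters the proofs of \cref{smooth} rather than quoting them verbatim — is that $\langle\nabla\rangle^{\gamma}|\nabla|^{-\gamma}$ fails to be a bounded operator on $L^{\infty}_x L^2_T$ near $\xi=0$, so $\langle\nabla\rangle^{\gamma}$ cannot simply be factored through $|\nabla|^{\gamma}$; it is the splitting into $\{|\xi|\le 1\}$ and $\{|\xi|>1\}$, combined with the symbol-level nature of the earlier arguments, that makes the reduction legitimate, and one must separately track the mild time-singularity of the $\tilde U_t$ multiplier, where the hypothesis $\beta>\tfrac12$ enters.
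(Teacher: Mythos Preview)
Your proposal is correct and follows essentially the same high/low-frequency decomposition as the paper: the homogeneous estimates of \cref{smooth} handle $\{|\xi|>1\}$, while a direct Minkowski-type bound on $\{|\xi|\le 1\}$ produces the extra $T^{1/2}$ (or $T^{\beta-1/2}$) factor. The only cosmetic difference is that the paper does \emph{not} re-enter the proofs on high frequencies --- it applies \cref{smooth} as a black box to $\tilde f=\langle\nabla\rangle^{\gamma}P_{>1}f$ and then uses that $|\nabla|^{-\gamma}\langle\nabla\rangle^{\gamma}P_{>1}$ is bounded on $L^2_x$ (so your worry about $\langle\nabla\rangle^{\gamma}|\nabla|^{-\gamma}$ being unbounded near $\xi=0$ is sidestepped by reversing the composition and localizing to high frequencies) --- and for $\tilde T_t$ it simply observes that, choosing $M>T$, the support $\{(t-t')|\xi|^{\sigma}>M\}$ never meets $\{|\xi|\le 1\}$, so the low-frequency contribution is empty.
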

\begin{proof}
We write $f= P_{\leq 1} f + P_{>1} f$ where $(P_{\leq 1} f)^{\wedge}=\phi (\xi) \widehat{f}(\xi)$ for $\phi\in C_c^{\infty}(\RR)$ and supp$(\phi)\subset B(0,1)$, and $(P_{>1} f)^{\wedge} =(1-\phi (\xi)) \widehat{f}(\xi)$.

Take for example
\[\norm{\langle\nabla\rangle^{\gamma} e^{-it|\nabla |^{\sigma}} f}_{L^{\infty}_x L^2_T} \leq \norm{\langle\nabla\rangle^{\gamma} e^{-it|\nabla |^{\sigma}} P_{\leq 1} f}_{L^{\infty}_x L^2_T} + \norm{\langle\nabla\rangle^{\gamma} e^{-it|\nabla |^{\sigma}} P_{>1} f}_{L^{\infty}_x L^2_T} \ .\]
Let $\tilde{f}=\langle\nabla\rangle^{\gamma}P_{> 1} f$. Then by \cref{smoothingprop} and the fact that these operators commute,
\begin{align*}
\norm{\langle\nabla\rangle^{\gamma} e^{-it|\nabla |^{\sigma}} P_{> 1} f}_{L^{\infty}_x L^2_T} & =\norm{ e^{-it|\nabla |^{\sigma}} \tilde{f}}_{L^{\infty}_x L^2_T}\lesssim \norm{|\nabla |^{-\gamma} \tilde{f}}_{L^2_x} \\
& =\norm{|\nabla |^{-\gamma} \langle\nabla\rangle^{\gamma}P_{> 1} f}_{L^2_x}\lesssim \norm{f}_{L^2_x} ,
\end{align*}
which follows from the fact that $|\nabla |^{-\gamma} \langle\nabla\rangle^{\gamma}P_{> 1}$ is a bounded operator from $L^2_x$ to $L^2_x$.

On the other hand, by the Minkowski inequality
\begin{align}\label{eq:minkowski}
\norm{\langle\nabla\rangle^{\gamma} e^{-it|\nabla |^{\sigma}} P_{\leq 1} f}_{L^{\infty}_x L^2_T} & =\norm{\int_{\RR} e^{-it|\xi |^{\sigma}} e^{ix\xi} \phi (\xi) \langle \xi\rangle^{\gamma} \widehat{f}(\xi ) \, d\xi}_{L^{\infty}_x L^2_T} \\
&\lesssim T^{\frac{1}{2}} \, \int_{\RR} | \phi (\xi ) \widehat{f}(\xi )| \, d\xi \lesssim T^{\frac{1}{2}} \,\norm{\widehat{f}\, }_{L^2_x} \lesssim T^{\frac{1}{2}}\, \norm{f}_{L^2_x} .\nonumber
\end{align}

The idea for the others will be the same, since all $S_t$, $T_t$ and $U_t$ composed with $P_{\leq 1}$ are bounded.

Now let's look at the estimates that we will use on the nonlinear part. Take for instance 
\[\norm{\langle\nabla\rangle^{\tilde{\gamma}'} \int_0^t \tilde{U}_{t-t'} G(t',x) dt'}_{L^{\infty}_x L^2_T} \lesssim (T^{\frac{\tilde{\gamma} - \tilde{\gamma}'}{\sigma}} +T^{\frac{1}{2}}) \norm{G}_{L^1_T L^2_x},\]
and let's prove it is true for $P_{\leq 1} G$ and $P_{>1} G$. \Cref{need} yields:
\[ \norm{\int_0^t \tilde{U}_{t-t'} (P_{>1} G)(t',x) dt'}_{L^{\infty}_x L^2_T} \lesssim  T^{\frac{\tilde{\gamma} - \tilde{\gamma}'}{\sigma}} \norm{G}_{L^1_T L^2_x}, \]
as explained before.

An argument such as \cref{eq:minkowski} based on repeated use of the Minkowski inequality yields  
\[ \norm{\int_0^t \tilde{U}_{t-t'} (P_{\leq1} G)(t',x) dt'}_{L^{\infty}_x L^2_T} \lesssim  T^{\frac{1}{2}}\, \norm{G}_{L^1_T L^2_x}. \]

Note that this step is not necessary for $\tilde{T}_{t-t'}$ because it is supported on the set $\{(t-t')|\xi|^{\sigma}>M\}$ which does not intersect $\{ |\xi|\leq 1\}$ for $M>T$.
\end{proof}

\subsection{$L^{\infty}_T L^2_x$ estimates}

\begin{proposition}\label{L21} Using the definitions for $S_t$, $T_t$ and $U_t$ given in \cref{Loperators}, we have
\begin{align*}
\norm{e^{-it|\nabla |^{\sigma}} f}_{L^{\infty}_T L^2_x} & \lesssim \norm{f}_{L^2_x} ,\\
\norm{S_t f}_{L^{\infty}_T L^2_x} &  \lesssim \norm{f}_{L^2_x} ,\\
\norm{T_t f}_{L^{\infty}_T L^2_x} & \lesssim \norm{f}_{L^2_x} ,\\
\norm{U_t f}_{L^{\infty}_T L^2_x} & \lesssim \norm{f}_{L^2_x} .
\end{align*}
\end{proposition}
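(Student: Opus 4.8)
The plan is to prove each of the four estimates in \cref{L21} by reducing them to the single observation that the relevant Fourier multipliers are uniformly bounded in $\xi$, so that Plancherel's theorem in $x$ followed by taking the supremum over $t\in[0,T]$ finishes the job. More precisely, for each operator the $x$-Fourier transform converts $L^2_x$ into an $L^2_\xi$-norm of the multiplier times $\widehat f$, and the point is that the multiplier has modulus $\le 1$ (or $\lesssim 1$) pointwise in $(t,\xi)$ on the relevant support.

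First I would handle $e^{-it|\nabla|^\sigma}$: its symbol $e^{-it|\xi|^\sigma}$ has modulus exactly $1$, so $\norm{e^{-it|\nabla|^\sigma}f}_{L^2_x}=\norm{f}_{L^2_x}$ for every $t$, and taking $\sup_{t\in[0,T]}$ gives the first line. Next, for $S_t$ the symbol is $e^{-it|\xi|^\sigma}\chi_{\{t|\xi|^\sigma\le M\}}$, whose modulus is bounded by $|\chi|\lesssim 1$ uniformly, so again $\norm{S_tf}_{L^2_x}\lesssim\norm{f}_{L^2_x}$ uniformly in $t$. For $T_t$ the symbol is $t^{-\beta}|\xi|^{-\alpha}\chi_{\{t|\xi|^\sigma>M\}}$; here one uses the support condition: on $\{t|\xi|^\sigma>M\}$ we have $t^{-\beta}|\xi|^{-\alpha}=t^{-\beta}|\xi|^{-\sigma\beta}=(t|\xi|^\sigma)^{-\beta}<M^{-\beta}\lesssim 1$, so the multiplier is again uniformly bounded, giving the third line by Plancherel and $\sup_t$. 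For $U_t$ the symbol is $E_\beta(i^{-\beta}t^\beta|\xi|^\sigma\cdot|\xi|^{\alpha-\sigma}\cdot\ldots)$—more precisely $E_\beta(i^{-\beta}t^\beta|\xi|^\alpha)$—restricted to $\{t|\xi|^\sigma\le 2M\}$, i.e. $t^\beta|\xi|^\alpha=(t|\xi|^\sigma)^\beta\le(2M)^\beta$ is bounded; since $E_\beta$ is entire (see \cref{ML}), it is bounded on the compact set $\{|z|\le(2M)^\beta\}$, so the multiplier is $\lesssim_{M,\beta}1$ on the support of $\chi$, and the same Plancherel-plus-supremum argument closes the last line.

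I do not expect any serious obstacle here; the estimates are a packaging of the cutoff definitions in \cref{Loperators}. The only mild point to be careful about is that for $T_t$ and $U_t$ one must genuinely invoke the respective support constraints—$t|\xi|^\sigma>M$ and $t|\xi|^\sigma\le 2M$—to turn the a priori $t$- and $\xi$-dependent symbols into uniformly bounded ones; without the cutoffs the symbols $t^{-\beta}|\xi|^{-\alpha}$ and $E_\beta(i^{-\beta}t^\beta|\xi|^\alpha)$ blow up (as $\xi\to 0$ and as $|\xi|\to\infty$ respectively). Since $0<\beta<1$ all the resulting constants $M^{-\beta}$, $(2M)^{\beta}$, etc., are finite, so each of the four bounds holds with an implicit constant depending only on $M$ and $\beta$ (and in particular independent of $T$).
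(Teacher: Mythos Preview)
Your proposal is correct and follows exactly the paper's approach: apply Plancherel in $x$ and observe that each of the four Fourier multipliers is uniformly bounded in $(t,\xi)$, then take the supremum in $t$. You have simply spelled out the boundedness checks (the cutoff constraints for $T_t$ and $U_t$, the entirety of $E_\beta$) that the paper leaves implicit.
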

\begin{proof}
We use the Plancherel theorem, and then the fact that every one of the multipliers involved are bounded.
\end{proof}

The Minkowski inequality immediately implies:

\begin{proposition}\label{L22} The following estimate holds:
\[\norm{\int_0^t |\nabla |^{\sigma -\alpha} e^{-i(t-t')|\nabla |^{\sigma}} G(t',x)\, dt'}_{L^{\infty}_T L^2_x} \lesssim \norm{|\nabla |^{\sigma - \alpha} G}_{L^1_T L^2_x} .\]
\end{proposition}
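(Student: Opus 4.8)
The plan is to reduce everything to the unitarity of the free propagator $e^{-it|\nabla|^{\sigma}}$ on $L^2_x$, exactly as the sentence preceding the statement suggests. Write $F(t,x):=\int_0^t |\nabla|^{\sigma-\alpha} e^{-i(t-t')|\nabla|^{\sigma}} G(t',x)\,dt'$, so that the left-hand side equals $\sup_{t\in[0,T]}\norm{F(t,\cdot)}_{L^2_x}$. For each fixed $t\in[0,T]$, I would apply Minkowski's integral inequality to move the $L^2_x$ norm inside the $dt'$-integral:
\[
\norm{F(t,\cdot)}_{L^2_x}\leq \int_0^t \norm{|\nabla|^{\sigma-\alpha} e^{-i(t-t')|\nabla|^{\sigma}} G(t',\cdot)}_{L^2_x}\,dt'.
\]

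Next, for each $t'$ I would pass to the Fourier side via Plancherel. The operators $|\nabla|^{\sigma-\alpha}$ and $e^{-i(t-t')|\nabla|^{\sigma}}$ are Fourier multipliers with symbols $|\xi|^{\sigma-\alpha}$ and $e^{-i(t-t')|\xi|^{\sigma}}$ respectively, and since $|e^{-i(t-t')|\xi|^{\sigma}}|=1$ pointwise in $\xi$, the propagator factor drops out:
\[
\norm{|\nabla|^{\sigma-\alpha} e^{-i(t-t')|\nabla|^{\sigma}} G(t',\cdot)}_{L^2_x}=\norm{|\xi|^{\sigma-\alpha}\,\widehat{G}(t',\xi)}_{L^2_{\xi}}=\norm{|\nabla|^{\sigma-\alpha} G(t',\cdot)}_{L^2_x}.
\]
(By the remark following \cref{maintheorem} one has $\sigma>\alpha$ in the regime of interest, so $|\nabla|^{\sigma-\alpha}$ is a genuine positive-order multiplier; but even this is irrelevant to the pointwise-in-$\xi$ cancellation, since the weight $|\xi|^{\sigma-\alpha}$ is simply carried through to the right-hand side.)

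Combining the two displays gives $\norm{F(t,\cdot)}_{L^2_x}\leq \int_0^t \norm{|\nabla|^{\sigma-\alpha} G(t',\cdot)}_{L^2_x}\,dt'\leq \int_0^T \norm{|\nabla|^{\sigma-\alpha} G(t',\cdot)}_{L^2_x}\,dt'=\norm{|\nabla|^{\sigma-\alpha}G}_{L^1_T L^2_x}$, where the penultimate step merely uses nonnegativity of the integrand to enlarge the domain of integration from $[0,t]$ to $[0,T]$. The right-hand side is independent of $t$, so taking the supremum over $t\in[0,T]$ completes the proof. There is essentially no obstacle here: the only things to keep straight are that Minkowski's inequality is invoked in its integral (continuous) form and that the inner integral runs up to $t$ rather than $T$, both of which are harmless. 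This is in deliberate contrast with \cref{smoothingprop2} and \cref{need}, where the time decay of the kernel had to be exploited carefully to gain derivatives; here no such effort is needed precisely because the relevant symbol is unimodular, so the estimate is indeed \emph{immediate}.
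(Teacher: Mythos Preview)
Your proof is correct and matches the paper's approach exactly: the paper simply states that the estimate follows immediately from the Minkowski inequality, which is precisely what you carry out in detail (Minkowski to bring the $L^2_x$ norm inside, then unitarity of $e^{-i(t-t')|\nabla|^{\sigma}}$ via Plancherel, then enlarge $[0,t]$ to $[0,T]$ and take the supremum).
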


\begin{proposition}\label{L23} For $\tilde{S}_t$, $\tilde{T}_t$ and $\tilde{U}_t$ as defined in \cref{NLoperators} we have
\begin{align*}
\norm{\int_0^t \tilde{S}_{t-t'} G(t',x)\, dt'}_{L^{\infty}_T L^2_x} & \lesssim T^{\beta - \frac{1}{2}}\norm{G}_{L^2_{T,x}} ,\\
\norm{\int_0^t \tilde{T}_{t-t'} G(t',x)\, dt'}_{L^{\infty}_T L^2_x} & \lesssim T^{\beta - \frac{1}{2}}\norm{G}_{L^2_{T,x}} ,\\
\norm{\int_0^t \tilde{U}_{t-t'} G(t',x)\, dt'}_{L^{\infty}_T L^2_x} & \lesssim T^{\beta - \frac{1}{2}}\norm{G}_{L^2_{T,x}} .
\end{align*}
\end{proposition}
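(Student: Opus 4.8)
The plan is to estimate each of the three operators in an essentially identical way, exploiting that the Fourier multipliers appearing in $\tilde{S}_t$, $\tilde{T}_t$ and $\tilde{U}_t$ have size at most a constant multiple of $t^{\beta-1}$ on the relevant frequency region, with the slightly worrying factor $|\xi|^{\sigma-\alpha}$ in $\tilde{S}_t$ neutralized by the support condition $t|\xi|^\sigma\le 2M$. First I would handle $\tilde{T}_t$, whose symbol is $(t-t')^{-1-\beta}|\xi|^{-2\alpha}\chi_{\{(t-t')|\xi|^\sigma>M\}}$: on the support of the cutoff one has $|\xi|^{-2\alpha}\le M^{-2\alpha/\sigma}(t-t')^{2\alpha/\sigma}=M^{-2\alpha/\sigma}(t-t')^{2\beta}$ since $\sigma=\alpha/\beta$, so the whole multiplier is bounded by a constant times $(t-t')^{\beta-1}$, uniformly in $\xi$. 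Hence by Plancherel (the multiplier is bounded on $L^2_x$ with the stated time-dependent bound) and Minkowski's inequality in $t'$,
\[
\norm{\int_0^t \tilde{T}_{t-t'} G(t',x)\, dt'}_{L^\infty_T L^2_x}\lesssim \int_0^T (T-t')^{\beta-1}\norm{G(t')}_{L^2_x}\, dt' .
\]
For $\tilde{S}_t$ the symbol is $|\xi|^{\sigma-\alpha}e^{-i(t-t')|\xi|^\sigma}\chi_{\{(t-t')|\xi|^\sigma\le 2M\}}$, whose modulus is $|\xi|^{\sigma-\alpha}\chi_{\{(t-t')|\xi|^\sigma\le 2M\}}\le (2M)^{(\sigma-\alpha)/\sigma}(t-t')^{-(\sigma-\alpha)/\sigma}=(2M)^{1-\beta}(t-t')^{\beta-1}$, again using $\sigma-\alpha=\sigma(1-\beta)$; and for $\tilde{U}_t$ the symbol is $(t-t')^{\beta-1}E_{\beta,\beta}(i^{-\beta}(t-t')^\beta|\xi|^\alpha)\chi_{\{(t-t')|\xi|^\sigma\le 2M\}}$, where on the support of the cutoff the argument of $E_{\beta,\beta}$ is bounded in modulus by $(2M)^\beta$ and so $|E_{\beta,\beta}|$ is bounded by a constant (the series \cref{genML} converges absolutely and uniformly on the closed disk of radius $(2M)^\beta$). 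In both cases we again land on the pointwise-in-$\xi$ bound $\lesssim_M (t-t')^{\beta-1}$ for the multiplier, hence on the same integral inequality displayed above for $\tilde{T}_t$.

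It then remains to convert $\int_0^T (T-t')^{\beta-1}\norm{G(t')}_{L^2_x}\, dt'$ into $T^{\beta-\frac12}\norm{G}_{L^2_{T,x}}$. This is a one-line application of the Cauchy–Schwarz inequality in $t'$: the kernel $(T-t')^{\beta-1}$ lies in $L^2([0,T])$ precisely because $\beta>\frac12$ (which, as the paper repeatedly notes, holds automatically under \cref{smoothingcondition}), with $\norm{(T-\cdot)^{\beta-1}}_{L^2([0,T])}=\bigl(\int_0^T s^{2\beta-2}\,ds\bigr)^{1/2}=(2\beta-1)^{-1/2}T^{\beta-1/2}$, so
\[
\int_0^T (T-t')^{\beta-1}\norm{G(t')}_{L^2_x}\, dt'\le \frac{T^{\beta-1/2}}{\sqrt{2\beta-1}}\,\Bigl(\int_0^T \norm{G(t')}_{L^2_x}^2\, dt'\Bigr)^{1/2}=\frac{T^{\beta-1/2}}{\sqrt{2\beta-1}}\,\norm{G}_{L^2_{T,x}} ,
\]
which is the claimed bound.

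The only genuinely delicate point is verifying that each multiplier is bounded by $C_M (t-t')^{\beta-1}$ pointwise in $\xi$ on its support; once that is in hand, everything reduces to Plancherel plus the Cauchy–Schwarz computation above, and the three estimates are proved simultaneously. I expect the main obstacle — such as it is — to be bookkeeping the constants depending on $M$ and checking the uniform convergence of the $E_{\beta,\beta}$ series on the compact frequency set, rather than anything conceptual; in particular there is no loss of derivatives here because the dangerous factor $|\xi|^{\sigma-\alpha}$ is always paired against a cutoff forcing $|\xi|^\sigma\lesssim (t-t')^{-1}$.
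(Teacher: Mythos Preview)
Your proof is correct and follows essentially the same approach as the paper: bound each Fourier multiplier pointwise by $C_M(t-t')^{\beta-1}$ using the cutoff (exactly as the paper does for $\tilde{T}_t$, then remarks that $\tilde{S}_t$ and $\tilde{U}_t$ are analogous), apply Minkowski and Plancherel, and finish with Cauchy--Schwarz in $t'$ using $\beta>\tfrac12$. One small sloppiness: the displayed inequality should read $\sup_{0\le t\le T}\int_0^t (t-t')^{\beta-1}\norm{G(t')}_{L^2_x}\,dt'$ rather than the single value at $t=T$, since the latter need not dominate the former; but your Cauchy--Schwarz step, applied at each $t$ and then taking the supremum, yields the stated bound regardless, so the conclusion is unaffected.
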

\begin{proof}
For $\tilde{T_t}$, we use the Minkowski inequality, the Plancherel theorem, then the cut-off and finally the Cauchy-Schwarz inequality:
\begin{align*}
\Big\lVert\int_0^t \tilde{T}_{t-t'} & G(t',x)\, dt'\Big \rVert_{L^{\infty}_T L^2_x} \\
& \lesssim \sup_{0\leq t\leq T} \int_0^t \norm{ (t-t')^{-1-\beta} |\xi |^{-2\alpha} |\widehat{G}(t',\xi)| \chi_{\{(t-t')|\xi |^{\sigma}>M\}}}_{L^2_{\xi}} \, dt' \\
& \lesssim \sup_{0\leq t\leq T} \int_0^t (t-t')^{-1+\beta} \norm{G(t')}_{L^2_x} \, dt' \\
& \lesssim \norm{G}_{L^2_T L^2_x} \, \sup_{0\leq t\leq T} \left(\int_0^t (t-t')^{-2+2\beta} \, dt'\right)^{\frac{1}{2}} .
\end{align*}
The proofs for the corresponding inequalities involving $\tilde{S}_t$ and $\tilde{U}_t$ are analogous.
\end{proof}

\subsection{$L^{p}_x L^{\infty}_T$ estimates - maximal function}

Now we are interested in studying what happens with these operators in spaces like $L^p_x L^{\infty}_T$ for $p\geq 2$. 
For the oscillatory part, there are global estimates that we may use. The following result can be found in \cite{KPV} as Lemma 3.29, and also as Theorem 1 in \cite{Peter}.

\begin{proposition}\label{maxfunction} Suppose that $\sigma >1$ and $f\in\mathcal{S}(\RR)$, the Schwartz space. The inequality
\[ \norm{e^{-it|\nabla |^{\sigma}}f}_{L^p_x L^{\infty}_t} \lesssim \norm{|\nabla |^s f}_{L^2_x}\]
holds if and only if $p=\frac{2}{1-2s}$ and $\frac{1}{4}\leq s <\frac{1}{2}$.
\end{proposition}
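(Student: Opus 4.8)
\textbf{Proof proposal for Proposition \ref{maxfunction}.}

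The statement is a classical maximal-function estimate for the dispersive propagator $e^{-it|\nabla|^\sigma}$, and the plan is to reduce it to the case already treated in the literature (Lemma 3.29 of \cite{KPV}, Theorem 1 of \cite{Peter}), which is exactly the content being quoted, so in the write-up I would simply cite those references. Still, to make the exposition self-contained I would sketch the structure of the proof. The sufficiency direction ($p=\frac{2}{1-2s}$, $\frac14\le s<\frac12$ imply the bound) proceeds by first using a Littlewood-Paley decomposition $f=\sum_j P_j f$, reducing to frequency-localized pieces $|\xi|\sim 2^j$. For a single dyadic block one rescales $\xi\mapsto 2^j\xi$ and $t\mapsto 2^{-j\sigma}t$ so that the estimate on block $j$ becomes the block-$0$ estimate with a gain of $2^{js}$ matching $\|{|\nabla|^s}P_jf\|_{L^2}\sim 2^{js}\|P_jf\|_{L^2}$; the constraint $p=\frac{2}{1-2s}$ is precisely the one that makes the $x$-integral scale correctly so the dyadic pieces can be summed (after a Littlewood-Paley / almost-orthogonality argument in $L^p_x$, valid for $p\ge 2$). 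For the unit-frequency block, one writes the kernel $K_t(x)=\int e^{-it|\xi|^\sigma+ix\xi}\psi(\xi)\,d\xi$, uses stationary phase / van der Corput to get pointwise decay of $\sup_t|K_t(x)|$ like $|x|^{-1/\sigma}$ away from the origin (the phase $-t|\xi|^\sigma+x\xi$ has a nondegenerate stationary point when $\sigma>1$), and combines this with an $L^2$ bound near the origin; a $TT^*$ or interpolation argument then yields the $L^p_xL^\infty_t$ bound at the endpoint $s=\frac14$, $p=4$, and the rest of the range $\frac14\le s<\frac12$ follows by Sobolev embedding in $x$ (trading spatial integrability for derivatives).

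The necessity direction uses scaling and a Knapp-type / focusing example: one tests the inequality on data whose Fourier transform is a bump on an annulus $|\xi|\sim\lambda$ (or a single-frequency wave packet), computes both sides as $\lambda\to\infty$, and finds that the inequality can only hold if the powers of $\lambda$ balance, which forces $p=\frac{2}{1-2s}$; a separate example (concentrating the data near a point) or the failure of the local smoothing/maximal bound below $s=\frac14$ rules out $s<\frac14$, while $s\ge\frac12$ is excluded because then the right-hand side would control an $L^\infty_x$-type quantity that $L^2$-data need not have.

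The main obstacle — and the reason this is quoted rather than reproved — is the endpoint $s=\frac14$, $p=4$ case, which is genuinely delicate: the kernel decay $|x|^{-1/\sigma}$ is exactly critical for $L^4_x$ when $\sigma$ is not too large, and one needs the full strength of the oscillatory-integral estimates (van der Corput with the precise dependence on $\sigma$, or the local smoothing machinery) rather than a soft argument. For the purposes of this paper, however, all that is needed is that such a global $L^p_xL^\infty_t$ bound holds for some admissible pair with $\sigma>1$, so I would state the proposition with a reference to \cite{KPV} and \cite{Peter} and move on, noting only that the hypothesis $\sigma>1$ (which, by the earlier remarks, holds automatically under \cref{parameterconditions}) is exactly what guarantees the nondegeneracy of the phase needed for the stationary-phase analysis.
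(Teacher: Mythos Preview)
Your proposal is correct and matches the paper's approach exactly: the paper does not prove this proposition at all but simply states it and cites \cite{KPV} (Lemma 3.29) and \cite{Peter} (Theorem 1), which is precisely what you say you would do. Your additional sketch of the underlying argument (Littlewood--Paley reduction, scaling to fix the relation $p=\tfrac{2}{1-2s}$, stationary phase for the unit-frequency kernel, and Knapp-type examples for necessity) goes beyond what the paper provides and is a reasonable outline of how the cited results are obtained.
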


The other operators can be treated as follows.

\begin{proposition} For $p>2$ and $T_t$ as defined in \cref{Loperators}, we have
\[ \norm{T_t f}_{L^p_x L^{\infty}_t } \lesssim \, \norm{|\nabla |^s \, f}_{L^2_x} ,\]
where $s=\frac{1}{2}-\frac{1}{p}$.
\end{proposition}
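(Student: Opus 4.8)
The statement to prove is the bound
\[\norm{T_t f}_{L^p_x L^\infty_t}\lesssim \norm{|\nabla|^s f}_{L^2_x},\qquad s=\tfrac12-\tfrac1p,\ p>2,\]
where $T_t f(x)=\int_\RR t^{-\beta}|\xi|^{-\alpha}\chi_{\{t|\xi|^\sigma>M\}}\widehat f(\xi)e^{ix\xi}\,d\xi$. The key structural observation is that, unlike the oscillatory piece, $T_t$ is a positive, monotone-in-$t$ multiplier: for each fixed $\xi$, the factor $t^{-\beta}$ is decreasing in $t$, and the cutoff $\chi_{\{t|\xi|^\sigma>M\}}$ turns on at $t=M|\xi|^{-\sigma}$ and stays on. (If $\chi$ is the smoothed version, replace it by the genuine indicator at the cost of a harmless constant, since $\chi$ is supported where $t|\xi|^\sigma\leq 2M$ and equals $1$ where $t|\xi|^\sigma\leq M$; the argument is unaffected.) Hence the supremum over $t$ is essentially attained at the activation time $t\approx M|\xi|^{-\sigma}$, which is the heart of the proof.

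\medskip

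The plan is as follows. First I would bound the $L^\infty_t$ norm pointwise in $x$ by moving the supremum inside the frequency integral via Minkowski's inequality after a dyadic decomposition: write $f=\sum_j P_j f$ with $P_j$ the Littlewood--Paley projection to $|\xi|\sim 2^j$, and note that $P_j f$ only contributes to $T_t$ when $t\gtrsim M 2^{-j\sigma}$. For a single dyadic block, the multiplier $t^{-\beta}|\xi|^{-\alpha}$ restricted to $|\xi|\sim 2^j$ and $t\geq cM2^{-j\sigma}$ has size at most $(M2^{-j\sigma})^{-\beta}2^{-j\alpha}\sim_M 2^{j\sigma\beta}2^{-j\alpha}=1$ using $\sigma\beta=\alpha$; so each block is controlled, uniformly in $t$, by its own Fourier mass. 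More precisely,
\[\sup_t|T_t P_j f(x)|\lesssim_M \Big|\int_{|\xi|\sim 2^j}\widehat{P_jf}(\xi)e^{ix\xi}\,d\xi\Big|=|\widetilde P_j f(x)|,\]
where $\widetilde P_j$ is a fattened projection. Summing in $j$ and taking $L^p_x$ gives, by the triangle inequality and then a Bernstein/square-function argument,
\[\norm{T_tf}_{L^p_xL^\infty_t}\lesssim \sum_j \norm{\widetilde P_j f}_{L^p_x}.\]
The last sum is then estimated by Bernstein: $\norm{\widetilde P_j f}_{L^p_x}\lesssim 2^{j(\frac12-\frac1p)}\norm{\widetilde P_j f}_{L^2_x}=2^{js}\norm{\widetilde P_j f}_{L^2_x}$, and since the multiplier is supported in $|\xi|\gtrsim 1$ (as $M>T$ forces $|\xi|^\sigma>M/T>1$ on the support, or simply one truncates to $|\xi|\geq 1$), the geometric decay at low frequencies together with almost-orthogonality yields $\sum_j 2^{js}\norm{\widetilde P_j f}_{L^2}\lesssim \norm{|\nabla|^sf}_{L^2}$ — but this naive summation loses a logarithm or requires care.

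\medskip

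To avoid the lossy summation, the cleaner route — and the one I expect to use — is not to sum dyadic blocks crudely but to realize $T_t f$ itself as (an average of) a maximal-type operator already controlled in the literature, mimicking the treatment of the corresponding $T_t$ estimate in $L^\infty_xL^2_T$ earlier in this paper. Concretely, change variables in the multiplier: since $t^{-\beta}|\xi|^{-\alpha}\chi_{\{t|\xi|^\sigma>M\}}=(t|\xi|^\sigma)^{-\beta}\chi_{\{t|\xi|^\sigma>M\}}$ depends on $t$ and $\xi$ only through $t|\xi|^\sigma$, one can write $T_tf(x)=\int_\RR m(t|\xi|^\sigma)\widehat f(\xi)e^{ix\xi}\,d\xi$ with $m(r)=r^{-\beta}\chi_{\{r>M\}}$ a bounded, smooth, compactly-supported-near-infinity-decaying profile. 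Writing $m$ via its "layer-cake" over level sets of $r$, or integrating by parts to express $m(t|\xi|^\sigma)=-\int_{t|\xi|^\sigma}^\infty m'(r)\,dr$ and then substituting $r=t|\eta|^\sigma$ so that the operator becomes an integral over a scaling parameter of dilates of a fixed frequency-localized multiplier applied to $f$, reduces the estimate to the known maximal bound for the single dilation. Then $\sup_t$ is absorbed because each term in the layer-cake is evaluated at a fixed scale, and one sums an absolutely convergent integral $\int|m'(r)|r^{\theta}\,dr<\infty$ for the relevant power $\theta$ coming from the scaling of the $L^p_x$ norm (finite because $m'(r)\sim r^{-\beta-1}$ with $\beta>\tfrac12>0$ and $m'$ is integrable near $r=M$).

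\medskip

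The main obstacle, as in the oscillatory case, is the interplay of the $L^\infty_t$ maximal function with the $L^p_x$ integrability: one cannot simply pull the $\sup_t$ through Plancherel, and the naive dyadic sum is logarithmically lossy at the endpoint $s=\tfrac12-\tfrac1p$. For $p>2$ strictly (as assumed), however, there is room: the Bernstein gain $2^{js}$ with $s>0$ fixed and the $|\xi|\gtrsim 1$ support give genuine geometric summability once one observes that only the high frequencies matter, so the elementary dyadic argument of the second paragraph in fact closes for $p>2$ without needing the slicker layer-cake reformulation — the endpoint $p=2$ (i.e. $s=0$) is exactly the one excluded. I would therefore present the dyadic argument: (i) reduce the smooth cutoff to the sharp one; (ii) note monotonicity in $t$ kills the supremum, leaving evaluation at the activation time; (iii) use $\sigma\beta=\alpha$ to see each dyadic block contributes with an $O_M(1)$ multiplier; (iv) conclude by Bernstein and geometric summation of $2^{js}$ over $j\geq 0$, which converges since $s=\tfrac12-\tfrac1p>0$.
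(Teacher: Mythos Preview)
Your dyadic argument has a genuine gap at the summation step. After Bernstein you arrive at $\sum_{j} 2^{js}\norm{\widetilde P_j f}_{L^2_x}$ and claim this is controlled by $\norm{|\nabla|^s f}_{L^2_x}$ via ``geometric summation of $2^{js}$ over $j\geq 0$, which converges since $s>0$''. This is backwards: $\sum_{j\geq 0}2^{js}$ \emph{diverges} for $s>0$, and in any case the bound you need is $\sum_j a_j\lesssim (\sum_j a_j^2)^{1/2}$ with $a_j=2^{js}\norm{P_j f}_{L^2}$, which is false in general. (The claim that the multiplier is supported in $|\xi|\gtrsim 1$ is also incorrect here, since the norm is $L^\infty_t$ over all $t>0$.) As written, your argument only yields the estimate with $|\nabla|^{s+\varepsilon}$ on the right, not at the sharp exponent. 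The loss comes from taking $\sup_t$ \emph{before} summing in $j$: for each fixed $t$ the multiplier on block $j$ has size $\sim 2^{-(j-j_0(t))\alpha}$ for $j\geq j_0(t)$ (with $2^{j_0(t)\sigma}\sim M/t$), so the blocks sum geometrically at fixed $t$, giving $\sup_t|T_tf(x)|\lesssim\sup_j|\widetilde P_jf(x)|$ rather than $\sum_j|\widetilde P_jf(x)|$; Littlewood--Paley and the sharp Sobolev embedding $\dot H^s\hookrightarrow L^p$ would then close. Your ordering discards exactly this structure.

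The paper takes a different and cleaner route, working at the kernel level. It sets $\mathcal{T}_t(x)=\int_\RR t^{-\beta}|\xi|^{-\alpha}\chi_{\{t|\xi|^\sigma>M\}}e^{ix\xi}\,d\xi$ and, after scaling to $t=1$, proves the uniform-in-$t$ pointwise bound $\big||\nabla|^{-s}\mathcal{T}_t(x)\big|\lesssim_s|x|^{s-1}$ by interpolating between the trivial $O(1)$ bound (absolute integrability, using $\alpha>1$) and the $O(|x|^{-1})$ bound from one integration by parts. Then $\sup_t\big|(|\nabla|^{-s}\mathcal{T}_t)\ast f\big|\lesssim |\cdot|^{s-1}\ast|f|$, and Hardy--Littlewood--Sobolev with $s=\tfrac12-\tfrac1p>0$ gives the sharp estimate directly.
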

\begin{proof}
We define 
\[\mathcal{T}_t(x):=\int_{\RR} t^{-\beta} |\xi |^{-\alpha} \chi_{\{t|\xi |^{\sigma} > M\}} \, e^{ix\xi} d\xi,\]
 which is well-defined because we are working under the assumption that $\alpha >1$. Recall that $\sigma=\frac{\alpha}{\beta}$ and consider 
\[|\nabla |^{-s} \mathcal{T}_t(x)=\int_{\RR} t^{-\beta} |\xi |^{-s-\alpha} \chi_{\{t|\xi |^{\sigma} > M\}} \, e^{ix\xi} d\xi, \]
Our goal is to prove that
\begin{equation}\label{s-decay}
\Big | |\nabla |^{-s} \mathcal{T}_t(x)\Big | \lesssim_s |x|^{s-1} \ \mbox{for all}\ x\in\RR,
\end{equation}
and for any $s\in [0,1]$ independently of time. By using 
\[|\nabla |^{-s} \mathcal{T}_t(x)=t^{\frac{s-1}{\sigma}} (|\nabla |^{-s} \mathcal{T}_1 )(t^{-\frac{1}{\sigma}} x),\] which follows by rescaling, we can rewrite inequality \cref{s-decay} as
\begin{align}
\Big |\, t^{\frac{s-1}{\sigma}} (|\nabla |^{-s} \mathcal{T}_1 )(t^{-\frac{1}{\sigma}} x)\Big | & \lesssim_s |x|^{s-1} ,\nonumber\\
 \Big |(|\nabla |^{-s} \mathcal{T}_1 )(t^{-\frac{1}{\sigma}} x)\Big | & \lesssim_s t^{\frac{1-s}{\sigma}} |x|^{s-1}= |t^{-\frac{1}{\sigma}} x |^{s-1} ,\nonumber\\
\Big | |\nabla |^{-s} \mathcal{T}_1(x)\Big | & \lesssim_s |x|^{s-1} .\label{HLS}
\end{align}
And therefore, it is enough to prove \cref{s-decay} in the case $t=1$, namely 
\[|\nabla |^{-s} \mathcal{T}_1(x)=\int_{\RR} |\xi |^{-s-\alpha} \chi_{\{|\xi |^{\sigma} > M\}} \, e^{ix\xi} d\xi . \]
By absolute integrability we have $\Big | |\nabla |^{-s} \mathcal{T}_1(x)\Big | \lesssim_s 1$, which we will use when $|x|$ is small. For $|x|>2$, we integrate by parts:
 \begin{align*}
 |\nabla |^{-s} \mathcal{T}_1(x) & =\int_{\RR} |\xi |^{-s-\alpha} \chi_{\{|\xi |^{\sigma} > M\}} \, \frac{1}{ix}\frac{d}{ d\xi} e^{ix\xi} d\xi \\
 & = - \int_{\RR} \frac{1}{ix}\frac{d}{ d\xi}\left( |\xi |^{-s-\alpha} \chi_{\{|\xi |^{\sigma} > M\}} \right) \, e^{ix\xi} d\xi .
 \end{align*}
Once again, the derivative of the function is absolutely integrable and so  
\[\Big | |\nabla |^{-s} \mathcal{T}_1(x)\Big | \lesssim_s |x|^{-1}.\]
Combining the $O(1)$ and the $O(|x|^{-1})$ bounds, \cref{HLS} follows. At this stage, while we could use the full decay $O(\langle x\rangle^{-1})$ to complete the proof of this proposition, we prefer to follow a strategy of proof that also works for the other operators where this decay is not available.
Finally, we use the Hardy-Littlewood-Sobolev inequality to prove the final bound
\begin{align}\label{usedHLS}
\norm{|\nabla |^{-s} T_t f}_{L^p_x L^{\infty}_T } & = \norm{(|\nabla |^{-s}\mathcal{T}_t) \ast f}_{L^p_x L^{\infty}_T }\leq \norm{ \sup_t ||\nabla |^{-s} \mathcal{T}_t|\ast |f|}_{L^p_x } \\
& \lesssim_s \norm{ |\cdot|^{s-1} \ast |f|}_{L^p_x}\lesssim_s \norm{f}_{L^2_x} .
\end{align}
The assumption $s=\frac{1}{2}-\frac{1}{p}>0$ is a necessary hypothesis to use the Hardy-Littlewood-Sobolev inequality (see \cite{Stein}).
\end{proof}

\begin{proposition} For $p> 2$ and $U_t$ as defined in \cref{Loperators}, we have
\[ \norm{U_t f}_{L^p_x L^{\infty}_t } \lesssim \norm{|\nabla |^s \, f}_{L^2_x} ,\]
where $s=\frac{1}{2}-\frac{1}{p}$ (and for $p=\infty$ we have $s=\frac{1}{2}$).
\end{proposition}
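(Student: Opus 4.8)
The proof follows the template of the one just given for $T_t$, the single new feature being that the frequency cut-off here localizes to \emph{low} frequencies $\{t|\xi|^{\sigma}\le M\}$, so that the multiplier $|\xi|^{-s}$ one must handle is singular at $\xi=0$ and the bare integration by parts used for $T_t$ has to be adjusted near the origin. Accordingly, I would introduce the convolution kernel
\[\mathcal U_t(x):=\int_{\RR}E_{\beta}(i^{-\beta}t^{\beta}|\xi|^{\alpha})\,\chi_{\{t|\xi|^{\sigma}\le M\}}\,e^{ix\xi}\,d\xi,\]
so that $U_tf=\mathcal U_t\ast f$, and reduce the statement to the uniform-in-$t$ pointwise bound
\[\sup_{t>0}\bigl|\,|\nabla|^{-s}\mathcal U_t(x)\bigr|\lesssim_s|x|^{s-1}\qquad(x\in\RR).\]
Granting this, Young's inequality followed by the Hardy--Littlewood--Sobolev inequality on $\RR$ gives, exactly as in \cref{usedHLS},
\[\norm{|\nabla|^{-s}U_tf}_{L^p_xL^{\infty}_t}\le\norm{\sup_t||\nabla|^{-s}\mathcal U_t|\ast|f|}_{L^p_x}\lesssim_s\norm{|\cdot|^{s-1}\ast|f|}_{L^p_x}\lesssim_s\norm{f}_{L^2_x},\]
which is valid precisely when $0<s<1$, i.e.\ (since $s=\tfrac12-\tfrac1p$) when $2<p<\infty$; replacing $f$ by $|\nabla|^{s}f$ yields the proposition, the value $p=\infty$, $s=\tfrac12$, being the formal endpoint of this range.

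To establish the kernel bound I would first reduce to $t=1$. The change of variables $\xi\mapsto t^{-1/\sigma}\xi$, together with $\sigma=\frac{\alpha}{\beta}$ (which makes both $t^{\beta}|\xi|^{\alpha}$ and $t|\xi|^{\sigma}$ invariant under this rescaling), gives, as in the $T_t$ argument,
\[|\nabla|^{-s}\mathcal U_t(x)=t^{\frac{s-1}{\sigma}}\bigl(|\nabla|^{-s}\mathcal U_1\bigr)\bigl(t^{-\frac1\sigma}x\bigr),\]
so it suffices to prove $\bigl|\,|\nabla|^{-s}\mathcal U_1(x)\bigr|\lesssim_s|x|^{s-1}$ for every $x\neq0$.

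For this, set $\psi(\xi):=E_{\beta}(i^{-\beta}|\xi|^{\alpha})\,\chi_{\{|\xi|^{\sigma}\le M\}}$; since $\alpha>1$, $E_{\beta}$ is entire, and $\chi$ is smooth, $\psi$ is a $C^1$ function with $\psi$ and $\psi'$ bounded and compactly supported. When $|x|\le1$ one bounds $|\nabla|^{-s}\mathcal U_1(x)$ by $\int_{\RR}|\xi|^{-s}|\psi(\xi)|\,d\xi\lesssim_s1\le|x|^{s-1}$, which is finite because $s<1$. When $|x|>1$ I would split the $\xi$-integral at $|\xi|=1/|x|$: on $\{|\xi|\le1/|x|\}$ the contribution is at most $\int_{|\xi|\le1/|x|}|\xi|^{-s}\,d\xi\lesssim_s|x|^{s-1}$, while on $\{|\xi|>1/|x|\}$, where $|\xi|^{-s}\psi$ is $C^1$, one integrates by parts once in $\xi$; the boundary terms at $\xi=\pm1/|x|$ are $\lesssim|x|^{-1}(1/|x|)^{-s}=|x|^{s-1}$, and, writing $\partial_\xi(|\xi|^{-s}\psi)=-s\,\mathrm{sgn}(\xi)|\xi|^{-s-1}\psi+|\xi|^{-s}\psi'$ and using that $\psi,\psi'$ are bounded and compactly supported, the interior term is $\lesssim_s|x|^{-1}\int_{1/|x|}^{\infty}|\xi|^{-s-1}\,d\xi+|x|^{-1}\lesssim_s|x|^{s-1}$. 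Adding the pieces yields $\bigl|\,|\nabla|^{-s}\mathcal U_1(x)\bigr|\lesssim_s|x|^{s-1}$, and hence the proposition.

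The only genuine obstacle is the singularity of $|\xi|^{-s}$ at the origin, which has no counterpart in the $T_t$ case, where the cut-off keeps $\xi$ bounded away from $0$. The dyadic split at $|\xi|\sim1/|x|$ resolves it, at the price of only the weaker pointwise decay $O(|x|^{s-1})$ rather than the $O(|x|^{-1})$ available for $T_t$; but $O(|x|^{s-1})$ is exactly what the Hardy--Littlewood--Sobolev step consumes, so nothing is lost.
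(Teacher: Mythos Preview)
Your proof is correct and follows essentially the same approach as the paper: define the kernel $\mathcal U_t$, rescale to $t=1$, split the integral at $|\xi|=|x|^{-1}$ to obtain the pointwise bound $|x|^{s-1}$ (direct estimate on the inner piece, one integration by parts on the outer piece), and finish with Hardy--Littlewood--Sobolev exactly as in \cref{usedHLS}. The paper's write-up is slightly terser---it does not separate out the case $|x|\le1$ or name the boundary terms---but the argument is the same.
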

\begin{proof}
We define
\begin{align*}
\mathcal{U}_t (x) & := \int_{\RR} \chi_{\{t|\xi |^{\sigma} \leq M\}}\,  E_{\beta}(i^{-\beta} t^{\beta} |\xi |^{\alpha}) \, e^{ix\xi} d\xi \\
& = \int_{\RR} \chi_{\{t|\xi |^{\sigma} \leq M\}}\, \left( \sum_{k=0}^{\infty} \frac{ i^{-\beta k} t^{\beta k} |\xi |^{\alpha k}}{\Gamma (\beta k + 1)}\right) \, e^{ix\xi} d\xi \, .
\end{align*}
By rescaling, it suffices to prove 
\begin{equation}\label{goal2}
\Big | |\nabla |^{-s} \mathcal{U}_1(x)\Big | \lesssim_s |x|^{s-1} ,
\end{equation}
for $s\in [0,1)$, and then use the Hardy-Littlewood-Sobolev inequality as in \cref{usedHLS}.

In order to prove \cref{goal2} we can directly use integration by parts together with the fact that this integral is bounded for every $x$ (remember that the series converges absolutely in the support of $\chi$).
\begin{align*}
|\nabla |^{-s} \mathcal{U}_1(x) = & \int_{\RR} |\xi |^{-s} \, \chi_{\{|\xi |^{\sigma} \leq M\}}  \left( \sum_{k=0}^{\infty} \frac{ i^{-\beta k} |\xi |^{\alpha k}}{\Gamma (\beta k + 1)}\right) \, e^{ix\xi} d\xi \\
 = & \int_{|\xi |\leq |x|^{-1}} |\xi |^{-s} \, \chi_{\{|\xi |^{\sigma} \leq M\}}  \left( \sum_{k=0}^{\infty} \frac{ i^{-\beta k} |\xi |^{\alpha k}}{\Gamma (\beta k + 1)}\right) \, e^{ix\xi} d\xi \\
& + \int_{|\xi |>|x|^{-1}} |\xi |^{-s} \, \chi_{\{|\xi |^{\sigma} \leq M\}}  \left( \sum_{k=0}^{\infty} \frac{ i^{-\beta k} |\xi |^{\alpha k}}{\Gamma (\beta k + 1)}\right) \, e^{ix\xi} d\xi =I_1+I_2 .
\end{align*}
Therefore:
\begin{align*}
| I_1 (x) |& \lesssim \int_{|\xi |\leq |x|^{-1}} |\xi |^{-s} \, d\xi \sim |x|^{s-1} ,\\
| I_2 (x) |& \lesssim \Big | \int_{|\xi |> |x|^{-1}} \frac{1}{x}\frac{d}{d\xi}\left[ |\xi |^{-s} \, \chi_{\{|\xi |^{\sigma} \leq M\}}  \left( \sum_{k=0}^{\infty} \frac{ i^{-\beta k} |\xi |^{\alpha k}}{\Gamma (\beta k + 1)}\right) \right] e^{ix\xi} \, d\xi\Big | \\
& \leq \frac{1}{|x|} \int_{|\xi |> |x|^{-1}} \Big | \frac{d}{d\xi}\left[ |\xi |^{-s} \, \chi_{\{|\xi |^{\sigma} \leq M\}}  \left( \sum_{k=0}^{\infty} \frac{ i^{-\beta k} |\xi |^{\alpha k}}{\Gamma (\beta k + 1)}\right) \right] \Big | d\xi\lesssim |x|^{s-1} .
\end{align*}
The last inequality follows from the fact that the series is absolutely convergent in any compact interval in $\xi$, as well as its derivatives, 
and so the leading behaviour comes from $\frac{d}{d\xi}|\xi |^{-s} \sim |\xi |^{-s-1}$ which integrates to around $|x|^{s}$.
\end{proof}

One may prove the following using similar ideas.

\begin{proposition}\label{futureref} For $p> 2$ and $S_t$ as defined in \cref{Loperators}, we have
\[ \norm{S_t f}_{L^p_x L^{\infty}_t } \lesssim \norm{|\nabla |^s \, f}_{L^2_x} ,\]
where $s=\frac{1}{2}-\frac{1}{p}$ (and for $p=\infty$ we have $s=\frac{1}{2}$).
\end{proposition}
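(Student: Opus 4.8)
The plan is to mirror the arguments already given for $T_t$ and $U_t$: realize $S_t$ as convolution with a kernel, establish the pointwise bound $\big| |\nabla|^{-s}\mathcal{S}_t(x) \big| \lesssim_s |x|^{s-1}$ uniformly in $t$, and then conclude with the Hardy--Littlewood--Sobolev inequality exactly as in \cref{usedHLS}. Concretely, define
\[\mathcal{S}_t(x):=\int_{\RR} e^{-it|\xi|^{\sigma}}\,\chi_{\{t|\xi|^{\sigma}\leq M\}}\,e^{ix\xi}\,d\xi ,\]
so that $S_t f = \mathcal{S}_t \ast f$. The change of variables $\xi = t^{-1/\sigma}\eta$ yields the scaling identity $|\nabla|^{-s}\mathcal{S}_t(x) = t^{(s-1)/\sigma}\,(|\nabla|^{-s}\mathcal{S}_1)(t^{-1/\sigma}x)$, so it suffices to prove
\[\big| |\nabla|^{-s}\mathcal{S}_1(x) \big| \lesssim_s |x|^{s-1} \qquad \text{for all } x\in\RR \text{ and } s\in[0,1).\]

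For $|x|\leq 2$ this is immediate: the amplitude $|\xi|^{-s}\chi_{\{|\xi|^{\sigma}\leq M\}}$ is absolutely integrable --- it is supported in a compact set and $s<1$ --- so $\big| |\nabla|^{-s}\mathcal{S}_1(x) \big| \lesssim_s 1 \lesssim_s |x|^{s-1}$ in that range. For $|x|>2$ I would use the splitting from the proof for $U_t$ rather than the direct integration by parts used for $T_t$: write $|\nabla|^{-s}\mathcal{S}_1 = I_1 + I_2$, where $I_1$ is the contribution of $\{|\xi|\leq|x|^{-1}\}$ and $I_2$ that of $\{|\xi|>|x|^{-1}\}$. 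One has $|I_1| \lesssim \int_{|\xi|\leq|x|^{-1}} |\xi|^{-s}\,d\xi \sim |x|^{s-1}$, while in $I_2$ one integrates by parts once using $e^{ix\xi} = (ix)^{-1}\partial_\xi e^{ix\xi}$. The derivative falling on $|\xi|^{-s}$ produces the leading term $\sim |\xi|^{-s-1}$, whose integral over $\{|\xi|>|x|^{-1}\}$ is $\sim |x|^s$, giving a contribution $\sim |x|^{-1}\cdot|x|^s = |x|^{s-1}$; the remaining pieces, coming from $\partial_\xi e^{-i|\xi|^{\sigma}} \sim |\xi|^{\sigma-1}$ and from $\partial_\xi \chi_{\{|\xi|^{\sigma}\leq M\}}$, together with the boundary term at $|\xi|=|x|^{-1}$, are all $O(|x|^{-1}) = O(|x|^{s-1})$ since $\sigma>1$ and $s>0$. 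Combining the two ranges gives the claimed pointwise bound.

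Once $|\nabla|^{-s}\mathcal{S}_t$ is dominated by $|x|^{s-1}$ uniformly in $t$, the estimate follows just as in \cref{usedHLS}:
\[\norm{|\nabla|^{-s} S_t f}_{L^p_x L^{\infty}_T} \leq \norm{\sup_t \big| |\nabla|^{-s}\mathcal{S}_t \big| \ast |f|}_{L^p_x} \lesssim_s \norm{|\cdot|^{s-1}\ast|f|}_{L^p_x} \lesssim_s \norm{f}_{L^2_x},\]
where the last step is the Hardy--Littlewood--Sobolev inequality, applicable precisely because $s = \tfrac{1}{2}-\tfrac{1}{p}>0$, i.e.\ $p>2$; replacing $f$ by $|\nabla|^{-s}f$ gives the stated inequality. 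The limiting case $p=\infty$ (where $s=\tfrac{1}{2}$ and Hardy--Littlewood--Sobolev no longer applies) would need a separate, direct Plancherel/Cauchy--Schwarz argument.

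The main obstacle, and the only real difference from the $T_t$ case, is that the cut-off $\chi_{\{t|\xi|^{\sigma}\leq M\}}$ keeps a full neighborhood of the origin, so $|\xi|^{-s}$ cannot be differentiated once while remaining integrable near $\xi=0$; this is exactly why the frequency integral must be split at the scale $|\xi|\sim|x|^{-1}$, as in the proof for $U_t$. One should also verify that the extra oscillatory factor $e^{-i|\xi|^{\sigma}}$, absent from $\mathcal{U}_t$, does no harm --- it does not, because $\sigma>1$ makes $\partial_\xi e^{-i|\xi|^{\sigma}}$ bounded near the origin.
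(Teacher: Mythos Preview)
Your proposal is correct and matches the paper's approach: the paper does not give a detailed proof for this proposition but simply states that it follows ``using similar ideas,'' and you have carried out exactly those ideas, correctly recognizing that the kernel of $S_t$ contains a neighborhood of the origin and therefore requires the frequency splitting at $|\xi|\sim|x|^{-1}$ from the $U_t$ proof rather than the direct integration by parts used for $T_t$. One trivial slip: to recover the stated inequality from $\norm{|\nabla|^{-s}S_t f}_{L^p_xL^\infty_t}\lesssim\norm{f}_{L^2_x}$ you replace $f$ by $|\nabla|^{s}f$, not $|\nabla|^{-s}f$.
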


Now let us study what will constitute the nonlinear terms under the same norm.

\begin{proposition} For $p\geq 4$ and $G(t, \cdot )\in \mathcal{S}(\RR )$ pointwise for each $t\in [0,T]$, we have
\[\norm{ \int_0^t |\nabla |^{\sigma -\alpha} e^{-i(t-t') |\nabla |^{\sigma}} G(t',x) \, dt'}_{L^p_x L^{\infty}_T} \lesssim \norm{|\nabla |^{\sigma-\alpha +s} G}_{L^1_T L^2_x} ,\]
where $s=\frac{1}{2}-\frac{1}{p}$.
\end{proposition}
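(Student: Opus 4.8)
The plan is to reduce this retarded Duhamel estimate to the global maximal function bound of \cref{maxfunction} by a sequence of Minkowski-type manipulations, exploiting crucially that the latter is global in time. Write $F(t,t',x) := |\nabla|^{\sigma-\alpha} e^{-i(t-t')|\nabla|^{\sigma}} G(t',x)$, so that the quantity to be estimated is $\norm{t\mapsto \int_0^t F(t,t',x)\,dt'}_{L^p_x L^{\infty}_T}$. For each fixed $x$ one has the pointwise chain of inequalities
\[\sup_{t\in[0,T]} \Big| \int_0^t F(t,t',x)\,dt'\Big| \leq \sup_{t\in[0,T]} \int_0^T |F(t,t',x)|\,\chi_{\{t'\leq t\}}\,dt' \leq \int_0^T \Big(\sup_{t\in[0,T]} |F(t,t',x)|\Big)\,dt',\]
and taking the $L^p_x$ norm followed by the Minkowski integral inequality gives
\[\norm{\int_0^t F(t,t',x)\,dt'}_{L^p_x L^{\infty}_T} \leq \int_0^T \norm{\,\sup_{t\in[0,T]} |F(t,t',\cdot)|\,}_{L^p_x}\,dt'.\]

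Next I would fix $t'\in[0,T]$ and estimate the inner norm. Since $|\nabla|^{\sigma-\alpha}$ and $e^{-i\tau|\nabla|^{\sigma}}$ are both Fourier multipliers they commute, and enlarging the supremum over $t\in[0,T]$ to a supremum over $\tau = t-t'\in\RR$ only increases it; hence, with $h := |\nabla|^{\sigma-\alpha} G(t',\cdot)$,
\[\norm{\,\sup_{t\in[0,T]} |F(t,t',\cdot)|\,}_{L^p_x} \leq \norm{e^{-i\tau|\nabla|^{\sigma}} h}_{L^p_x L^{\infty}_{\tau}} \lesssim \norm{|\nabla|^s h}_{L^2_x} = \norm{|\nabla|^{\sigma-\alpha+s} G(t',\cdot)}_{L^2_x},\]
where the middle inequality is \cref{maxfunction}: for $s = \tfrac12 - \tfrac1p$ one has $1-2s = \tfrac2p$, hence $p = \tfrac{2}{1-2s}$, and the requirement $\tfrac14\leq s<\tfrac12$ is exactly $p\geq 4$; the hypothesis $\sigma>1$ is in force throughout this section. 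Since \cref{maxfunction} is stated for Schwartz $f$ but $h$ is in general only as regular as $|\nabla|^{\sigma-\alpha}$ allows, one first extends that estimate by density to all $h$ with $|\nabla|^s h\in L^2_x$, which is harmless because $\norm{|\nabla|^s h}_{L^2_x}$ is precisely the norm appearing on the right. Integrating the last display in $t'$ over $[0,T]$ yields $\norm{|\nabla|^{\sigma-\alpha+s}G}_{L^1_T L^2_x}$, which is the claim.

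The argument is essentially routine given \cref{maxfunction}; the only point requiring care is the bookkeeping between the retarded integral $\int_0^t$ and the $L^{\infty}_T$ norm, namely that the supremum over $t\in[0,T]$ may be pulled inside the $dt'$ integral and then enlarged to a supremum over all of $\RR$. This last step is legitimate exactly because \cref{maxfunction} holds globally in time, which is also why no positive power of $T$ appears on the right-hand side here, in contrast with the estimates for $T_t$, $U_t$, $\tilde T_t$ and $\tilde U_t$, whose multipliers are only controlled through the cut-offs $\chi_{\{t|\xi|^{\sigma}\gtrless M\}}$.
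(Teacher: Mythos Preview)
Your proof is correct and follows essentially the same approach as the paper: pull the $dt'$ integral outside by Minkowski, then apply the global maximal estimate \cref{maxfunction} to each fixed $t'$. The only cosmetic difference is that the paper absorbs the phase shift by setting $f_{t'}:=e^{it'|\nabla|^{\sigma}}|\nabla|^{\sigma-\alpha}G(t',\cdot)$ and applies \cref{maxfunction} to $e^{-it|\nabla|^{\sigma}}f_{t'}$ (using unitarity at the end), whereas you change variables $\tau=t-t'$ and enlarge the supremum to all of $\RR$; these are equivalent.
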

\begin{proof}
Let $f_{t'}(x):=e^{it' |\nabla |^{\sigma}} |\nabla |^{\sigma -\alpha} G(t',x)$, then by applying \cref{maxfunction} and the fact that the operator is unitary we have
\begin{align*}
\Big\lVert \int_0^t |\nabla |^{\sigma -\alpha} & e^{-i(t-t') |\nabla |^{\sigma}} G(t',x) \, dt'\Big\rVert_{L^p_x L^{\infty}_T} \\
& \leq \int_0^T \norm{ |\nabla |^{\sigma -\alpha} e^{-i(t-t') |\nabla |^{\sigma}} G(t',x)}_{L^p_x L^{\infty}_t ([0,T])} \, dt' \\
& =\int_0^T \norm{ e^{-it|\nabla |^{\sigma}} f_{t'}(x)}_{L^p_x L^{\infty}_t ([0,T])} \, dt'\\
& \lesssim \int_0^T \norm{ |\nabla |^{s} f_{t'}(x)}_{L^2_x} \, dt' = \int_0^T \norm{e^{-it' |\nabla |^{\sigma}}  |\nabla |^{\sigma -\alpha+ s} G(t',x)}_{L^2_x} \, dt' \\
& =\int_0^T \norm{ |\nabla |^{\sigma -\alpha+ s} G(t',x)}_{L^2_x} \, dt'\ . 
\end{align*}
\end{proof}

\begin{proposition}\label{futureref2} For $p>2$ and $\tilde{S}_t$ as defined in \cref{NLoperators}, we have
\[\norm{ \int_0^t \tilde{S}_{t-t'} G(t',x) \, dt'}_{L^p_x L^{\infty}_T} \lesssim \norm{|\nabla |^{\sigma-\alpha +s} G}_{L^1_T L^2_x} ,\]
for $s=\frac{1}{2}-\frac{1}{p}$.
\end{proposition}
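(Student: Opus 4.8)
The plan is to reduce this estimate on $\tilde{S}_t$ to the maximal function estimate for the oscillatory operator $e^{-it|\nabla|^\sigma}$ from \Cref{maxfunction}, exactly as in the proof of the preceding proposition for $\int_0^t |\nabla|^{\sigma-\alpha} e^{-i(t-t')|\nabla|^\sigma} G\, dt'$. Recall that $\tilde{S}_t f(x) = \int_{\RR} |\xi|^{\sigma-\alpha} e^{-it|\xi|^\sigma} \chi_{\{t|\xi|^\sigma\leq M\}}\, \widehat{f}(\xi)\, e^{ix\xi}\, d\xi$, so $\tilde{S}_{t-t'} G(t',\cdot)$ is almost the same as $|\nabla|^{\sigma-\alpha} e^{-i(t-t')|\nabla|^\sigma} G(t',\cdot)$, the only difference being the time-dependent frequency cutoff $\chi_{\{(t-t')|\xi|^\sigma\leq M\}}$.

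First I would apply the Minkowski inequality in the $t'$ variable to pull the time integral outside the $L^p_x L^\infty_T$ norm, reducing matters to bounding $\norm{\tilde{S}_{t-t'} G(t',\cdot)}_{L^p_x L^\infty_t([0,T])}$ for each fixed $t'$ and then integrating in $t'$. Next, for fixed $t'$, I would set $f_{t'}(x) := e^{it'|\nabla|^\sigma} |\nabla|^{\sigma-\alpha} G(t',x)$ and write $\tilde{S}_{t-t'} G(t',x)$ as $P_{t}\, e^{-it|\nabla|^\sigma} f_{t'}(x)$, where $P_t$ is the Fourier multiplier with symbol $\chi_{\{t|\xi|^\sigma\leq M\}}$. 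Since $\sup_t \norm{P_t h}_{L^p_x L^\infty_t}$ can be controlled by $\norm{h}_{L^p_x L^\infty_t}$ — the cutoff is a harmless bounded Fourier multiplier, uniformly in $t$, because it is the composition of a fixed Littlewood–Paley-type projection with a rescaling (indeed one can absorb it using a smooth $\chi$ as set up after \cref{Loperators}, or simply note $P_t$ commutes with $e^{-it|\nabla|^\sigma}$ and has kernel with uniformly bounded $L^1$ mass after rescaling) — this step costs nothing. Then \Cref{maxfunction} gives $\norm{e^{-it|\nabla|^\sigma} f_{t'}}_{L^p_x L^\infty_t} \lesssim \norm{|\nabla|^s f_{t'}}_{L^2_x}$ for $s = \frac{1}{2}-\frac{1}{p}$, and since $e^{it'|\nabla|^\sigma}$ is unitary on $L^2_x$, this equals $\norm{|\nabla|^{\sigma-\alpha+s} G(t',\cdot)}_{L^2_x}$. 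Integrating in $t'$ over $[0,T]$ yields the claimed bound $\norm{|\nabla|^{\sigma-\alpha+s} G}_{L^1_T L^2_x}$.

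The main obstacle, such as it is, is justifying that inserting the time-dependent frequency cutoff $\chi_{\{(t-t')|\xi|^\sigma\leq M\}}$ does not destroy the maximal function estimate: \Cref{maxfunction} is stated for the unadorned operator $e^{-it|\nabla|^\sigma}$, whereas here the symbol is truncated depending on the same $t$ over which the $L^\infty$ is taken, so one cannot simply quote it as a black box. The cleanest way to handle this is to observe that $\chi_{\{t|\xi|^\sigma\leq M\}}$ can be written as $m(t^{1/\sigma}\xi)$ for a fixed smooth compactly-supported-away-from-a-neighborhood-of-infinity function $m$ with $m\equiv 1$ near the origin in a suitable sense; then the corresponding kernel is an $L^1$-normalized rescaling of a fixed Schwartz-tail kernel, so convolution against $\sup_t |\text{kernel}|$ is bounded on $L^p_x$ by Young's inequality uniformly in $t$, and one commutes it past $e^{-it|\nabla|^\sigma}$. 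Alternatively — and this is the slicker route the paper likely intends — one splits $G = P_{\leq 1} G + P_{> 1} G$ as in the proof of \Cref{thsmooth}: on $P_{>1} G$ the cutoff is a bounded multiplier uniformly in $t$ (it only affects a compact frequency region times a rescaling, which is controlled by the same HLS/kernel-decay argument used for $\tilde{S}_t$ in the $L^p_x L^\infty_t$ estimate proved earlier, cf. \Cref{futureref}), and on $P_{\leq 1} G$ one uses the elementary Minkowski-inequality bound picking up a harmless $T^{1/2}$ — though here, since the target norm has no $(1+T^{1/2})$ factor, one should instead note that on low frequencies $|\xi|\leq 1$ one has $\sigma-\alpha+s \geq 0$ and the operator is again bounded with the stated decay, so no $T$-loss appears. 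I expect the write-up to simply say this is "analogous to the previous proposition" with the cutoff absorbed as above.
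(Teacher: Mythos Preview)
Your main approach overcomplicates matters and contains a real gap. The paper's proof is three lines: it observes the identity $|\nabla|^{-s-\sigma+\alpha}\tilde{S}_t = |\nabla|^{-s} S_t$ (both operators carry the same cutoff $\chi_{\{t|\xi|^\sigma\leq M\}}$ and the same exponential $e^{-it|\xi|^{\sigma}}$; they differ only by the factor $|\xi|^{\sigma-\alpha}$), applies Minkowski in $t'$, and invokes \Cref{futureref} directly for $S_{t-t'}$. No separate argument about the cutoff is needed, because \Cref{futureref} is already the statement for $S_t$ with the cutoff built in, proved via the kernel bound $||\nabla|^{-s}\mathcal{S}_1(x)|\lesssim |x|^{s-1}$ and HLS.

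Your route via \Cref{maxfunction} has two problems. First, \Cref{maxfunction} only holds for $p\geq 4$, so you would not recover the stated range $p>2$. Second, your argument for absorbing the time-dependent cutoff $P_{t-t'}$ fails: the kernel $K_t(x)=t^{-1/\sigma}\check{m}(t^{-1/\sigma}x)$ does have uniformly bounded $L^1$ mass, but that only gives $\|P_t h\|_{L^p}\lesssim\|h\|_{L^p}$ for each fixed $t$. To control $\sup_t |P_t h_t(x)|$ pointwise you are led to $(\sup_t |K_t|)\ast(\sup_s|h_s|)$, and $\sup_t |K_t(x)|\sim |x|^{-1}$ on $\RR$, which is the endpoint case where both Young and HLS fail. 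Commuting $P_{t-t'}$ past $e^{-it|\nabla|^\sigma}$ does not help, since the resulting function $P_{t-t'}f_{t'}$ still depends on $t$. Your ``alternative'' paragraph does gesture toward \Cref{futureref}, which is exactly the right tool --- but then you should simply use it from the start via the identity above, rather than trying to push the estimate through \Cref{maxfunction}.
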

\begin{proof}
Note that $|\nabla |^{-s-\sigma+\alpha} \tilde{S}_t=|\nabla |^{-s} S_t(x)$,
and therefore we may use \cref{futureref} for the operator $S_t$
\begin{align*}
\norm{ \int_0^t |\nabla |^{-s-\sigma+\alpha} \tilde{S}_{t-t'} G(t',x) \, dt'}_{L^p_x L^{\infty}_T} & =\norm{ \int_0^t |\nabla |^{-s} S_{t-t'} G(t',x) \, dt'}_{L^p_x L^{\infty}_T} \\
& \leq \int_0^T \norm{ |\nabla |^{-s} S_{t-t'} G(t',x)}_{L^p_x L^{\infty}_t ([t',T])} \, dt' \\
& \lesssim \int_0^T \norm{G(t',x)}_{L^2_x} \, dt' .
\end{align*}
\end{proof}

\begin{rem} We can also prove the bound 
\[\norm{ \int_0^t \tilde{S}_{t-t'} G(t',x) \, dt'}_{L^p_x L^{\infty}_T} \lesssim T^{\beta-\frac{1}{2}} \, \norm{|\nabla |^{s} G}_{L^2_{T,x}} .\]
 The main idea is to write 
 \begin{align*}
|\nabla |^{-s} \mathscr{S}_t(x):= & \int_{\RR} |\xi |^{-s+\sigma-\alpha} \chi_{\{t|\xi |^{\sigma} \leq M\}}\, e^{it|\xi |^{\sigma}+ix\xi}\, d\xi \\
= & t^{\beta -1} \int_{\RR} |\xi |^{-s} \chi_{\{t|\xi |^{\sigma} \leq M\}}\, (t^{1-\beta} |\xi |^{\sigma-\alpha})\, e^{it|\xi |^{\sigma}+ix\xi}\, d\xi ,
\end{align*}
and then we treat the piece in parentheses as a bounded function and proceed as with the proof for $\tilde{U}_t$ below.
The difference is that this second proof only works for finite $T$, whereas the one for \cref{futureref2} works for $T=\infty$ as well.
\end{rem}

\begin{proposition} For $p>2$ and $\tilde{T}_t$ as defined in \cref{NLoperators}, we have
\[\norm{ \int_0^t \tilde{T}_{t-t'} G(t',x) \, dt'}_{L^p_x L^{\infty}_T} \lesssim \norm{|\nabla |^{\sigma-\alpha +s} G}_{L^1_T L^2_x} ,\]
for $s=\frac{1}{2}-\frac{1}{p}$. Moreover ,
\[\norm{ \int_0^t \tilde{T}_{t-t'} G(t',x) \, dt'}_{L^p_x L^{\infty}_T} \lesssim T^{\beta-\frac{1}{2}} \, \norm{|\nabla |^{s} G}_{L^2_{T,x}} .\]
\end{proposition}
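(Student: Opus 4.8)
The plan is to reduce both estimates, as in the proofs for $T_t$ and $\tilde{S}_t$ above, to a pointwise decay bound for a fractionally integrated kernel followed by the Hardy--Littlewood--Sobolev inequality.

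For the first inequality, set $\theta:=\sigma-\alpha+s$ (positive since $\sigma>\alpha$) and introduce the convolution kernel
\[\tilde{\mathcal{T}}_t(x):=\int_{\RR}t^{-1-\beta}|\xi|^{-2\alpha}\chi_{\{t|\xi|^{\sigma}>M\}}\,e^{ix\xi}\,d\xi ,\]
so that $\tilde{T}_t G=\tilde{\mathcal{T}}_t\ast G$ and $\tilde{T}_t G=|\nabla|^{-\theta}\tilde{T}_t\big(|\nabla|^{\theta}G\big)$. First I would rescale $\xi\mapsto t^{-1/\sigma}\xi$ to get $|\nabla|^{-\theta}\tilde{\mathcal{T}}_t(x)=t^{(s-1)/\sigma}\,(|\nabla|^{-\theta}\tilde{\mathcal{T}}_1)(t^{-1/\sigma}x)$, so it suffices to bound $|\nabla|^{-\theta}\tilde{\mathcal{T}}_1(x)=\int_{\RR}|\xi|^{-s-\sigma-\alpha}\chi_{\{|\xi|^{\sigma}>M\}}e^{ix\xi}d\xi$. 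Because the symbol is supported on $|\xi|>M^{1/\sigma}$ (no low-frequency obstruction) and decays like $|\xi|^{-s-\sigma-\alpha}$ with $s+\sigma+\alpha>1$, it is absolutely integrable, giving an $O(1)$ bound; integrating by parts once when $|x|>2$ (the symbol's derivative is again absolutely integrable) gives an $O(|x|^{-1})$ bound. Combining these for $s\in[0,1)$ yields $|\,|\nabla|^{-\theta}\tilde{\mathcal{T}}_1(x)|\lesssim|x|^{s-1}$, hence $\sup_{t>0}|\,|\nabla|^{-\theta}\tilde{\mathcal{T}}_t(x)|\lesssim|x|^{s-1}$ uniformly in $t$. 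Then, exactly as in the Hardy--Littlewood--Sobolev step \cref{usedHLS}, I would pull the time integral out of the $L^p_xL^{\infty}_T$ norm, dominate the supremum in $t$ by the maximal function $\big(\sup_{\tau>0}|\,|\nabla|^{-\theta}\tilde{\mathcal{T}}_{\tau}|\big)\ast|\,|\nabla|^{\theta}G(t',\cdot)|$, and apply Minkowski and Hardy--Littlewood--Sobolev (valid since $s=\tfrac12-\tfrac1p>0$), landing on $\int_0^T\norm{|\nabla|^{\theta}G(t',\cdot)}_{L^2_x}dt'=\norm{|\nabla|^{\sigma-\alpha+s}G}_{L^1_TL^2_x}$.

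For the second inequality the extra ingredient is to extract the temporal singularity: on the support $\{t|\xi|^{\sigma}>M\}$ one has $|\xi|^{-2\alpha}=(|\xi|^{\sigma})^{-2\beta}=t^{2\beta}(t|\xi|^{\sigma})^{-2\beta}$ with $(t|\xi|^{\sigma})^{-2\beta}\le M^{-2\beta}$, so the multiplier of $\tilde{T}_t$ equals $t^{\beta-1}$ times a symbol of the form $\psi(t|\xi|^{\sigma})$ with $\psi$ fixed, bounded and smooth. Running the same kernel analysis for $|\nabla|^{-s}$ of this kernel gives $|\,|\nabla|^{-s}\tilde{\mathcal{T}}_{\tau}(x)|\lesssim\tau^{\beta-1}|x|^{s-1}$ uniformly in $\tau$. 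Writing $\tilde{T}_{t-t'}G(t',\cdot)=|\nabla|^{-s}\tilde{T}_{t-t'}(|\nabla|^{s}G(t',\cdot))$ and $h(t',x):=\big(|\cdot|^{s-1}\ast|\,|\nabla|^{s}G(t',\cdot)|\big)(x)$, I would obtain the pointwise bound
\[\Big|\int_0^t\tilde{T}_{t-t'}G(t',x)\,dt'\Big|\lesssim\int_0^t(t-t')^{\beta-1}h(t',x)\,dt'\lesssim T^{\beta-\frac12}\Big(\int_0^Th(t',x)^2\,dt'\Big)^{1/2},\]
using Cauchy--Schwarz in $t'$ together with $\int_0^t(t-t')^{2\beta-2}dt'\sim t^{2\beta-1}$ (finite precisely because $\beta>\tfrac12$); the right-hand side is independent of $t$. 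Taking the $L^p_x$ norm, using Minkowski ($p\ge2$) to replace $L^p_xL^2_{t'}$ by $L^2_{t'}L^p_x$, and finally Hardy--Littlewood--Sobolev in $x$, this yields $\lesssim T^{\beta-\frac12}\norm{|\nabla|^{s}G}_{L^2_{T,x}}$.

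The only nontrivial point — the rest being the HLS-plus-scaling mechanism already used in the previous propositions — is the uniform-in-time pointwise decay of the fractionally integrated kernels $|\nabla|^{-\theta}\tilde{\mathcal{T}}_\tau$ and $|\nabla|^{-s}\tilde{\mathcal{T}}_\tau$: one must check that the cutoff $\chi_{\{t|\xi|^{\sigma}>M\}}$ keeps the symbols away from the origin, that $\sigma,\alpha>1$ supplies enough high-frequency decay for the $O(1)$ and $O(|x|^{-1})$ bounds, and — crucially for the second estimate — that the time homogeneity can be arranged so that the residual singularity is exactly $(t-t')^{\beta-1}$, which is square integrable in time exactly when $\beta>\tfrac12$, the condition already forced by \cref{parameterconditions}.
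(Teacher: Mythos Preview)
Your proposal is correct and follows essentially the same approach as the paper, whose proof is the one-liner ``Both proofs are analogous to those of \cref{futureref2}.'' You have accurately reconstructed those analogous details: the first estimate via the uniform-in-time kernel bound $|\,|\nabla|^{-\theta}\tilde{\mathcal{T}}_t(x)|\lesssim |x|^{s-1}$ (obtained by scaling plus the $O(1)$/$O(|x|^{-1})$ bounds, exactly as in the $T_t$ proposition) followed by Hardy--Littlewood--Sobolev, and the second estimate via extracting the factor $(t-t')^{\beta-1}$ from the multiplier (as in the Remark after \cref{futureref2}) and then proceeding as in the $\tilde{U}_t$ proof with Cauchy--Schwarz in time.
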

\begin{proof}
Both proofs are analogous to those of \cref{futureref2}.
\end{proof}

\begin{proposition} For $p>2$ and $\tilde{U}_t$ as defined in \cref{NLoperators}, we have
\[\norm{ \int_0^t \tilde{U}_{t-t'} G(t',x) \, dt'}_{L^p_x L^{\infty}_T} \lesssim T^{\beta-\frac{1}{2}} \, \norm{|\nabla |^{s} G}_{L^2_{T,x}} ,\]
for $s=\frac{1}{2}-\frac{1}{p}$.
\end{proposition}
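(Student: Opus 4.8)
The plan is to treat $\tilde{U}_t$ as a convolution operator and reduce the bound to a pointwise estimate on its kernel, exactly in the spirit of the preceding propositions for $T_t$, $U_t$ and $\tilde{S}_t$. Writing $\tilde{U}_{t-t'}G(t',\cdot)=\mathcal{U}_{t-t'}\ast G(t',\cdot)$, the kernel is
\[\mathcal{U}_\tau(x)=\tau^{\beta-1}\int_{\RR}E_{\beta,\beta}(i^{-\beta}\tau^{\beta}|\xi|^{\alpha})\,\chi_{\{\tau|\xi|^{\sigma}\leq M\}}\,e^{ix\xi}\,d\xi=:\tau^{\beta-1}\mathcal{V}_\tau(x),\]
so the only genuine difference from the $\tilde{S}_t$ estimate of \cref{futureref2} is the factor $\tau^{\beta-1}$, which is what produces the power $T^{\beta-\frac12}$ and forces the restriction to finite $T$. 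Since $|\nabla|^s$ commutes with all of these Fourier multipliers, it is enough to prove
\[\norm{|\nabla|^{-s}\int_0^t\tilde{U}_{t-t'}H(t',\cdot)\,dt'}_{L^p_x L^{\infty}_T}\lesssim T^{\beta-\frac12}\norm{H}_{L^2_{T,x}}\]
and then take $H=|\nabla|^s G$.

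First I would establish the uniform pointwise kernel estimate $\bigl|\,|\nabla|^{-s}\mathcal{V}_\tau(x)\,\bigr|\lesssim_s|x|^{s-1}$ for every $x\in\RR$, every $\tau>0$, and $0\leq s<1$. Using $\alpha/\sigma=\beta$ one checks the scaling identity $|\nabla|^{-s}\mathcal{V}_\tau(x)=\tau^{(s-1)/\sigma}(|\nabla|^{-s}\mathcal{V}_1)(\tau^{-1/\sigma}x)$, which reduces the claim to $\tau=1$. On the support of the cutoff the argument $i^{-\beta}|\xi|^{\alpha}$ ranges over a compact set, so the entire function $E_{\beta,\beta}$ and its $\xi$-derivatives are bounded there; one then argues precisely as in the proof of the corresponding estimate for the operator $U_t$: splitting $|\nabla|^{-s}\mathcal{V}_1(x)$ into the regions $|\xi|\leq|x|^{-1}$ and $|\xi|>|x|^{-1}$, the first piece is $\lesssim\int_{|\xi|\leq|x|^{-1}}|\xi|^{-s}\,d\xi\sim|x|^{s-1}$, and the second is controlled by a single integration by parts, the boundary term and the bulk term (whose integrand is dominated by $|\xi|^{-s-1}$ plus a bounded function on the support of the cutoff) each contributing $\lesssim|x|^{s-1}$.

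With the kernel bound in hand, for $t\in[0,T]$ and $x\in\RR$ we have
\[\Bigl|\Bigl(|\nabla|^{-s}\!\int_0^t\tilde{U}_{t-t'}H(t',\cdot)\,dt'\Bigr)(x)\Bigr|\lesssim\int_0^t(t-t')^{\beta-1}\bigl(|\cdot|^{s-1}\ast|H(t',\cdot)|\bigr)(x)\,dt'.\]
Applying the Cauchy--Schwarz inequality in $t'$ — here $\int_0^t(t-t')^{2\beta-2}\,dt'\lesssim T^{2\beta-1}$ converges exactly because $\beta>\frac12$, a condition the paper has already observed to hold in the relevant range of parameters — bounds the right-hand side by $T^{\beta-\frac12}\bigl(\int_0^T(|\cdot|^{s-1}\ast|H(t',\cdot)|)(x)^2\,dt'\bigr)^{1/2}$, which is independent of $t$, so passing to $L^{\infty}_T$ costs nothing. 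Taking then the $L^p_x$ norm, I would use Minkowski's integral inequality to interchange $L^p_x$ and $L^2_{t'}$ (permissible since $p\geq 2$) and finally the Hardy--Littlewood--Sobolev inequality in the form $\norm{|\cdot|^{s-1}\ast|H(t',\cdot)|}_{L^p_x}\lesssim\norm{H(t',\cdot)}_{L^2_x}$, valid precisely when $s=\frac12-\frac1p\in(0,1)$, to arrive at $\lesssim T^{\beta-\frac12}\norm{H}_{L^2_{T,x}}$, as desired.

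The main obstacle is the uniform-in-$\tau$ pointwise kernel estimate: one must verify that the generalized Mittag-Leffler factor does not destroy the $|x|^{s-1}$ decay, which hinges on the absolute convergence — on the support of the cutoff — of the series defining $E_{\beta,\beta}$ and of the series obtained by differentiating it in $\xi$, together with the exact scaling invariance that reduces matters to $\tau=1$. Once that is settled the remainder is routine; the only points demanding attention are the order of the three reductions (Cauchy--Schwarz in time, Minkowski between $x$ and $t'$, Hardy--Littlewood--Sobolev in space) and the fact that the integrable-but-nontrivial singularity $(t-t')^{\beta-1}$ is precisely what confines the estimate to finite $T$, in contrast with \cref{futureref2}.
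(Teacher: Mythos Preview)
Your proof is correct and follows essentially the same approach as the paper: both establish the pointwise kernel bound $\bigl||\nabla|^{-s}\mathscr{U}_\tau(x)\bigr|\lesssim\tau^{\beta-1}|x|^{s-1}$ (via scaling to $\tau=1$ and the $U_t$-style splitting/integration by parts), then combine Cauchy--Schwarz in $t'$ with Hardy--Littlewood--Sobolev. The only cosmetic difference is the order of the last two reductions: the paper pulls the spatial convolution outside and applies Cauchy--Schwarz in $t'$ pointwise in the inner variable before HLS, whereas you apply Cauchy--Schwarz in $t'$ first and then interchange $L^p_x$ and $L^2_{t'}$ via Minkowski; both orderings are valid here and yield the same bound.
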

\begin{proof}
In this case we need to be slightly more careful. Based on the definition of $\tilde{U}_t$ in \cref{NLoperators} we let
\[|\nabla |^{-s} \mathscr{U}_t(x):=\int_{\RR} t^{\beta-1} |\xi |^{-s} \chi_{\{t|\xi |^{\sigma} \leq M\}}  \left( \sum_{k=0}^{\infty} \frac{t^{\beta k} i^{-\beta k} |\xi |^{\alpha k}}{\Gamma (\beta k + \beta)}\right) \, e^{ix\xi} d\xi ,\]
where $\sigma=\frac{\alpha}{\beta}$. As with $|\nabla |^{-s} U_t$, one proves that $\Big | |\nabla |^{-s} \mathscr{U}_t(x) \Big |\lesssim_s t^{\beta -1} |x|^{s-1}$, where the implicit constant is independent of time. Then
\begin{align*}
\sup_{t\in [0,T]} \Big | \int_0^t |\nabla |^{-s} & \mathscr{U}_{t-t'} \ast G_{t'} \, dt' \Big | \leq \sup_{t\in [0,T]}  \int_0^t ||\nabla |^{-s}\mathscr{U}_{t-t'}| \ast |G_{t'}| \, dt' \\
& \lesssim \sup_{t\in [0,T]}  \int_0^t |t-t' |^{\beta -1} |x|^{s-1} \ast |G_{t'}(x)| \, dt' \\
& =|x|^{s-1} \ast \left( \sup_{t\in [0,T]}  \int_0^t |t-t' |^{\beta -1} |G_{t'}(x)| \, dt'\right) \\
& \leq |x|^{s-1} \ast \left[ \sup_{t\in [0,T]}  \left(\int_0^t |t-t' |^{2(\beta -1)}dt'\right)^{\frac{1}{2}} \norm{G(t',x)}_{L^2_{t'}([0,t])}\right] \\
& =|x|^{s-1} \ast \left(  T^{\beta -\frac{1}{2}} \norm{G(t',x)}_{L^2_T}\right) ,
\end{align*}
and we finish by using the Hardy-Littlewood-Sobolev inequality.
\end{proof}

We sum up these results in the following:

\begin{theorem}\label{maxth}
Suppose $4\leq p<\infty$ and $s=\frac{1}{2}-\frac{1}{p}$, and $\alpha>\frac{\sigma +1}{2}$. Then
\begin{align*}
\norm{e^{-it|\nabla |^{\sigma}}f}_{L^p_x L^{\infty}_T} & \lesssim \norm{|\nabla |^s f}_{L^2_x} ,\\
\norm{S_t f}_{L^p_x L^{\infty}_t } & \lesssim \norm{|\nabla |^s \, f}_{L^2_x} ,\\
\norm{T_t f}_{L^p_x L^{\infty}_t } & \lesssim \, \norm{|\nabla |^s \, f}_{L^2_x} ,\\
\norm{U_t f}_{L^p_x L^{\infty}_t } & \lesssim \norm{|\nabla |^s \, f}_{L^2_x} .
\end{align*}
Moreover,
\begin{align*}
\norm{ \int_0^t |\nabla |^{\sigma -\alpha} e^{-i(t-t') |\nabla |^{\sigma}} G(t',x) \, dt'}_{L^p_x L^{\infty}_T} & \lesssim \norm{|\nabla |^{\sigma-\alpha +s} G}_{L^1_T L^2_x} ,\\
\norm{ \int_0^t \tilde{S}_{t-t'} G(t',x) \, dt'}_{L^p_x L^{\infty}_T} & \lesssim \norm{|\nabla |^{\sigma-\alpha +s} G}_{L^1_T L^2_x} ,\\
\norm{ \int_0^t \tilde{T}_{t-t'} G(t',x) \, dt'}_{L^p_x L^{\infty}_T} & \lesssim \norm{|\nabla |^{\sigma-\alpha +s} G}_{L^1_T L^2_x} ,\\
\norm{ \int_0^t \tilde{U}_{t-t'} G(t',x) \, dt'}_{L^p_x L^{\infty}_T} & \lesssim T^{\beta-\frac{1}{2}} \, \norm{|\nabla |^{s} G}_{L^2_{T,x}} .
\end{align*}
\end{theorem}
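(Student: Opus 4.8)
The statement of \cref{maxth} is a compilation of the propositions established earlier in this subsection, so the plan is to assemble them while checking that the three hypotheses --- $4\le p<\infty$, $s=\frac12-\frac1p$, and $\alpha>\frac{\sigma+1}{2}$ --- are exactly what is needed to invoke each of them at once. I would begin by recording the arithmetic that makes everything fit: under $s=\frac12-\frac1p$ one has $1-2s=\frac2p$, hence $p=\frac{2}{1-2s}$, and $4\le p<\infty$ is equivalent to $\frac14\le s<\frac12$; moreover $\alpha>\frac{\sigma+1}{2}$ together with $\sigma=\alpha/\beta$ forces $\sigma>\alpha>1$ and $\beta>\frac12$.

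For the four homogeneous-flow estimates, the bound for $e^{-it|\nabla|^\sigma}$ is precisely \cref{maxfunction} applied to the admissible pair $(p,s)$ just identified (and $\sigma>1$ is available), while for $S_t$, $T_t$ and $U_t$ the corresponding propositions above already supply the claimed estimate for every $p>2$ with $s=\frac12-\frac1p$, and $p\ge4>2$. The only extra input is that the kernels $\mathcal{T}_t$, $\mathcal{U}_t$, and the analogue for $S_t$, be well defined, which is where $\alpha>1$ enters.

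For the four Duhamel-type estimates, the bound for $\int_0^t|\nabla|^{\sigma-\alpha}e^{-i(t-t')|\nabla|^\sigma}G\,dt'$ is the proposition proved above and genuinely needs $p\ge4$: one freezes $t'$, applies \cref{maxfunction} to $f_{t'}=e^{it'|\nabla|^\sigma}|\nabla|^{\sigma-\alpha}G(t',\cdot)$, and then sums in $t'$ by Minkowski. The estimates for $\tilde S_t$ and $\tilde T_t$ follow from \cref{futureref2} and the analogous proposition for $\tilde T_t$, both valid for $p>2$: in each case one strips off $\sigma-\alpha$ derivatives to reach an operator whose convolution kernel decays like $|x|^{s-1}$ uniformly in $t$ --- for $\tilde S_t$ this operator is literally $|\nabla|^{-s}S_t$, for $\tilde T_t$ a close analogue --- and then applies the Hardy--Littlewood--Sobolev inequality, exactly as in the proofs of \cref{futureref2} and of the $T_t$ estimate. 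The $\tilde U_t$ estimate is the one remaining proposition, and it is the only one with a different right-hand side: the generalized Mittag--Leffler kernel admits only the time-dependent bound $\bigl||\nabla|^{-s}\mathscr{U}_t(x)\bigr|\lesssim_s t^{\beta-1}|x|^{s-1}$, so plugging this in, using Cauchy--Schwarz in $t'$ (legitimate because $\beta>\frac12$) to turn the singular time integral into a factor $T^{\beta-1/2}$ times an $L^2_T$ norm of $G$, and finishing with Hardy--Littlewood--Sobolev in $x$, produces $T^{\beta-1/2}\norm{|\nabla|^sG}_{L^2_{T,x}}$.

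Since every ingredient is already in hand there is no real obstacle; the content is bookkeeping. The one thing worth emphasizing is that the four Duhamel operators do \emph{not} all satisfy the scale-invariant estimate with $\norm{|\nabla|^{\sigma-\alpha+s}G}_{L^1_TL^2_x}$ on the right: $e^{-it|\nabla|^\sigma}$, $\tilde S_t$ and $\tilde T_t$ do, because their symbols yield convolution kernels with time-uniform spatial decay $O(|x|^{s-1})$, whereas $\tilde U_t$ only gives the weaker form $T^{\beta-1/2}\norm{|\nabla|^sG}_{L^2_{T,x}}$ and is consequently useful only on finite time intervals. I would also double-check that the $\sigma-\alpha$ loss of derivatives in the Duhamel right-hand sides is internally consistent, which it is, since in each case that loss is precisely the $|\nabla|^{\sigma-\alpha}$ factor carried by the leading oscillatory term of \cref{NLasymp}.
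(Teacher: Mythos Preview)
Your proposal is correct and matches the paper's approach exactly: the paper presents \cref{maxth} with the preamble ``We sum up these results in the following'' and gives no separate proof, treating it as a pure compilation of the preceding propositions in the subsection. Your bookkeeping of which hypothesis is needed where (in particular that $p\ge4$ is used only for the $e^{-it|\nabla|^\sigma}$ estimates via \cref{maxfunction}, that $\alpha>1$ is what makes the kernel $\mathcal{T}_t$ integrable, and that $\beta>\tfrac12$ is what makes the Cauchy--Schwarz step for $\tilde U_t$ legitimate) is accurate and more explicit than what the paper writes.
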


\noindent The norm non-homogeneous version of this theorem follows from the estimate 
\[\norm{|\nabla |^{s} f}_{L^2_x} \leq  \norm{\langle \nabla \rangle^{s} f}_{L^2_x}\] 
for $s\geq 0$.

\section{Proof of \cref{maincor}}
\label{sec: easy proof}

In order to simplify our exposition, we will consider a special case of the initial value problem \cref{linear}. Namely, consider a cubic nonlinearity $p=3$, and the following specific values for the parameters
\[ \alpha=\frac{7}{4} ,\qquad \beta=\frac{7}{8},\qquad \sigma=2,\qquad \gamma=\frac{1}{2},\qquad \tilde{\gamma}=\frac{1}{4},\]
which clearly satisfy the conditions in \cref{parameterconditions}.

Based on the integral equation \cref{inteq}, we define the following operator
\begin{align}
\Phi (v)& := \int_{\RR}  \widehat{f}(\xi) \, E_{\beta}(|\xi |^{\alpha} t^{\beta} i^{-\beta}) e^{ix\cdot \xi} \, d\xi \label{fracduhamel}\\
& + i^{-\beta} \, \int_0^{t} \int_{\RR} \widehat{g}(\tau , \xi) \, (t-\tau )^{\beta -1} E_{\beta , \beta }(i^{-\beta} (t-\tau )^{\beta} |\xi |^{\alpha}) \, e^{ix\cdot \xi}\, d\xi d\tau  ,\nonumber
\end{align}
where 
\[\widehat{g}(\tau , \xi)=\int_{\RR} |v(\tau,x)|^2 v(\tau,x) e^{-ix\cdot \xi} \, dx \ .\]

We remind the reader of the notation $a-$ to denote the number $a-\varepsilon$ for $0<\varepsilon \ll 1$ small enough. Similarly, we denote by $a+$ the number $a+\varepsilon$ for $0<\varepsilon \ll 1$ small enough.

Now we define the norms
\begin{align*}
\eta_1 (v) & = \norm{\langle \nabla \rangle^{\frac{3}{4}-}\, v }_{L^{\infty}_x L^2_T} ,\\
\eta_2 (v) & = \norm{\langle \nabla \rangle^{\frac{1}{4}}\, v }_{L^{\infty}_T L^2_x} ,\\
\eta_3 (v) & = \norm{ v }_{L^{4}_x L^{\infty}_T} ,
\end{align*}
and let $\Lambda_T:= \max_{j=1,2,3}  \,\eta_j$. Then consider the space 
\[X_T:=\{v\in C([0,T],H^{\frac{1}{4}}(\RR )) \mid \Lambda_T (v)<\infty \} .\]
Our goal is to show that for small enough $T$, there exists a ball $B_R \subset X_T$ such that $\Phi : B_R \longrightarrow B_R$ is a contraction, and then apply the contraction mapping theorem.

Before we do that, we prove a useful lemma.

\begin{lemma}\label{series} For $j=1,2,3$ the following inequality holds
\begin{align*}
\eta_j (\Phi (v))  \lesssim & \ \eta_j (e^{-it|\nabla |^{\sigma}}f)+ \eta_j(S_t f)+ \eta_j(T_t^{\dagger} f)+ \eta_j(U_t f) \\
& + \eta_j \left(\int_0^t |\nabla |^{\sigma-\alpha} \, e^{-i(t-t')|\nabla |^{\sigma}} g(t',\cdot) \, dt'\right)  + \eta_j\left(\int_0^t \tilde{S}_{t-t'}g(t',\cdot) \, dt'\right)\\
& +\eta_j\left(\int_0^t \tilde{T}_{t-t'}^{\dagger} g(t',\cdot) \, dt'\right)+ \eta_j\left(\int_0^t \tilde{U}_{t-t'}g(t',\cdot) \, dt'\right).
\end{align*}
where $T_t^{\dagger}$ and $\tilde{T}_t^{\dagger}$ satisfy the same estimates as $T_t$ and $\tilde{T}_t$, respectively.
\end{lemma}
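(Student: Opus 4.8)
The plan is to substitute the representation \cref{fracduhamel} of $\Phi(v)$ into each $\eta_j$, split the two Mittag-Leffler multipliers appearing there by means of the smooth cutoffs $\chi_{\{t|\xi|^{\sigma}\leq M\}}$ (for the linear term) and $\chi_{\{(t-\tau)|\xi|^{\sigma}\leq M\}}$ (for the Duhamel term), and then invoke the asymptotic expansions \cref{Lasymp} and \cref{NLasymp} on the complementary high-frequency regions. The eight operators on the right-hand side will turn out to be precisely the pieces of this decomposition, and the inequality will follow from the triangle inequality, each $\eta_j$ being a norm.

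First I would treat the linear term $\int_{\RR}\widehat f(\xi)E_{\beta}(|\xi|^{\alpha}t^{\beta}i^{-\beta})e^{ix\xi}\,d\xi$. On the support of $\chi_{\{t|\xi|^{\sigma}\leq M\}}$ the series \cref{ML} converges absolutely and the corresponding piece is, by definition, $U_tf$. On $\{t|\xi|^{\sigma}>M\}$ the argument $z=|\xi|^{\alpha}t^{\beta}i^{-\beta}$ satisfies $|\arg z|=\tfrac{\pi}{2}\beta$ and $|z|>M$, so \cref{Lasymp} applies; since $z^{1/\beta}=-it|\xi|^{\sigma}$ the leading term is $\tfrac1\beta e^{-it|\xi|^{\sigma}}$. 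Writing $\chi_{\{t|\xi|^{\sigma}>M\}}=1-\chi_{\{t|\xi|^{\sigma}\leq M\}}$ in front of this leading term produces exactly $\tfrac1\beta e^{-it|\nabla|^{\sigma}}f$ minus $\tfrac1\beta S_tf$, while the remaining inverse-power terms $k=1,\dots,N-1$ and the $O(|z|^{-N})$ error assemble into an operator $T_t^{\dagger}$ whose multiplier is supported on $\{t|\xi|^{\sigma}>M\}$. On that set $t^{-\beta}|\xi|^{-\alpha}=(t|\xi|^{\sigma})^{-\beta}<M^{-\beta}$, so the $k$-th term is bounded by $M^{-\beta(k-1)}$ times the model symbol $t^{-\beta}|\xi|^{-\alpha}$ of $T_t$, and likewise for the error; moreover $E_\beta$ and the remainder in \cref{Lasymp} are smooth in $\xi$ on this set, so $T_t^{\dagger}$ keeps the regularity needed for the integration-by-parts arguments behind the maximal-function bounds in \cref{maxth}. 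Hence $T_t^{\dagger}$ obeys the same estimates as $T_t$.

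Next I would run the analogous decomposition on the Duhamel term $i^{-\beta}\int_0^t\int_{\RR}\widehat g(\tau,\xi)(t-\tau)^{\beta-1}E_{\beta,\beta}(i^{-\beta}(t-\tau)^{\beta}|\xi|^{\alpha})e^{ix\xi}\,d\xi\,d\tau$, now splitting with $\chi_{\{(t-\tau)|\xi|^{\sigma}\leq M\}}$. The low-frequency piece is $\int_0^t\tilde U_{t-\tau}g(\tau,\cdot)\,d\tau$ by definition of $\tilde U_t$. On $\{(t-\tau)|\xi|^{\sigma}>M\}$ the expansion \cref{NLasymp} applies with leading term $\tfrac1\beta i^{\beta-1}|\xi|^{\sigma-\alpha}e^{-i(t-\tau)|\xi|^{\sigma}}$; expanding the cutoff as before rewrites it as a multiple of $\int_0^t|\nabla|^{\sigma-\alpha}e^{-i(t-\tau)|\nabla|^{\sigma}}g\,d\tau$ minus a multiple of $\int_0^t\tilde S_{t-\tau}g\,d\tau$. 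The terms $k=2,\dots,N$ together with the error define $\tilde T_t^{\dagger}$, and on $\{(t-\tau)|\xi|^{\sigma}>M\}$ the $k$-th term is bounded by $M^{-\beta(k-2)}$ times the model symbol $(t-\tau)^{-1-\beta}|\xi|^{-2\alpha}$ of $\tilde T_{t-\tau}$, so $\tilde T_t^{\dagger}$ satisfies the same estimates as $\tilde T_t$. Collecting the eight pieces and applying the triangle inequality to each $\eta_j$ yields the stated bound.

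The only nonroutine point, and thus the main obstacle, is the verification that $T_t^{\dagger}$ and $\tilde T_t^{\dagger}$ genuinely inherit the estimates of $T_t$ and $\tilde T_t$: one needs both the uniform-in-$t$ pointwise domination of their symbols on the relevant supports (which, after the natural rescaling in $t$, reduces to a statement at a single time) and the $\xi$-smoothness required for the integration by parts in the $L^p_xL^\infty_T$ estimates. Both are controlled by taking $M$ large enough — this is also what makes \cref{Lasymp} and \cref{NLasymp} valid on the whole complementary region and, as noted earlier, ensures the high-frequency supports avoid $\{|\xi|\leq 1\}$ once $M>T$. $\square$
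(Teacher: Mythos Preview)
Your proposal is correct and follows essentially the same approach as the paper: split with the cutoff $\chi_{\{t|\xi|^{\sigma}\leq M\}}$, identify the low-frequency piece as $U_tf$ (resp.\ $\tilde U_{t-t'}$), write $\chi_{>M}=1-\chi_{\leq M}$ on the leading asymptotic term to produce $e^{-it|\nabla|^{\sigma}}f-S_tf$ (resp.\ $|\nabla|^{\sigma-\alpha}e^{-i(t-t')|\nabla|^{\sigma}}-\tilde S_{t-t'}$), and bundle the remaining asymptotic tail into $T_t^{\dagger}$ (resp.\ $\tilde T_t^{\dagger}$). Your justification that $T_t^{\dagger}$ inherits the estimates of $T_t$ is in fact slightly more careful than the paper's, which only says ``in every proof involving $T_t$ the absolute value was taken at some stage''; you correctly flag that the $L^p_xL^\infty_T$ bounds also require the $\xi$-smoothness of the symbol for the integration-by-parts step.
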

\begin{proof}
Let $u$ be the linear part of $\Phi (v)$, and remember that $\chi:=\chi_{\{t|\xi |^{\sigma} \leq M\}}$ is a smooth function such that $\chi=1$ if $t|\xi |^{\sigma} \leq M$ and $\chi=0$ if $t|\xi |^{\sigma} >2M$. Let $\widehat{P_{\chi}u}=\chi \widehat{u}$. Then we have
\begin{align*}
u & =P_{\chi}u + P_{1-\chi}u =  U_t f + P_{1-\chi}u \\
& = U_t f + e^{-it|\nabla |^{\sigma}} f - P_{\chi} e^{-it|\nabla |^{\sigma}} f + \left( P_{1-\chi}u - P_{1-\chi} e^{-it|\nabla |^{\sigma}} f\right)\\
 & = U_t f + e^{-it|\nabla |^{\sigma}} f - S_t f + P_{1-\chi}\left(u - e^{-it|\nabla |^{\sigma}} f \right).
\end{align*}
Now by \cref{Lasymp}, the Fourier multiplier corresponding to $P_{1-\chi}\left(u - e^{-it|\nabla |^{\sigma}} f\right)$ may be controlled as follows for large enough $M$
\[\left(1-\chi(t,\xi)\right) \, \Big | E_{\beta}(|\xi |^{\alpha} t^{\beta} i^{-\beta}) - e^{-it|\xi |^{\sigma}}\Big |\lesssim \left(1-\chi(t,\xi)\right)\, \frac{1}{t^{\beta} |\xi |^{\alpha}} .\]
As a result, one can check that $P_{1-\chi}\left(u - e^{-it|\nabla |^{\sigma}} f\right)$ satisfies the same estimates as $T_t$, since in every proof involving $T_t$ the absolute value was taken at some stage. The idea for the nonlinear part of $\Phi (v)$ is similar.
\end{proof}

Let us start by taking the first norm of \cref{fracduhamel}. We will use a combination of \cref{series} and \cref{thsmooth} to control the two terms that form  $\Phi (v)$. Let us remind the reader of the estimates proved in \cref{thsmooth} for the particular case of our parameters. 

\begin{align*}
\norm{\langle\nabla\rangle^{\frac{1}{2}} e^{-it|\nabla |^2} f}_{L^{\infty}_x L^2_T}& \lesssim (1+T^{\frac{1}{2}}) \norm{f}_{L^2_x} ,\\
\norm{\langle\nabla\rangle^{\frac{1}{2}-} S_t f}_{L^{\infty}_x L^2_T} & \lesssim T^{0+} (1+T^{\frac{1}{2}}) \norm{f}_{L^2_x} ,\\
\norm{\langle\nabla\rangle^{\frac{1}{2}-} T_t f}_{L^{\infty}_x L^2_T} & \lesssim T^{0+} (1+T^{\frac{1}{2}}) \norm{f}_{L^2_x} ,\\
\norm{\langle\nabla\rangle^{\frac{1}{2}-} U_t f}_{L^{\infty}_x L^2_T} & \lesssim T^{0+} (1+T^{\frac{1}{2}}) \norm{f}_{L^2_x} , \\
\norm{\langle\nabla\rangle^{\frac{1}{4}} \int_0^t |\nabla|^{\frac{1}{4}} e^{-i(t-t')|\nabla |^{2}} G(t',x) dt'}_{L^{\infty}_x L^2_T}& \lesssim (1+T^{\frac{1}{2}}) \norm{G}_{L^1_T L^2_x} ,\\
\norm{\langle\nabla\rangle^{\frac{1}{4}-} \int_0^t \tilde{S}_{t-t'} G(t',x) dt'}_{L^{\infty}_x L^2_T} & \lesssim (T^{0+} +T^{\frac{1}{2}}) \norm{G}_{L^1_T L^2_x} ,\\
\norm{\langle\nabla\rangle^{\frac{1}{4}-} \int_0^t \tilde{T}_{t-t'} G(t',x) dt'}_{L^{\infty}_x L^2_T} & \lesssim  T^{0+} \, \norm{G}_{L^1_T L^2_x} ,\\
\norm{\langle\nabla\rangle^{\frac{1}{4}-} \int_0^t \tilde{U}_{t-t'} G(t',x) dt'}_{L^{\infty}_x L^2_T} & \lesssim (T^{0+} +T^{\frac{1}{2}}) \norm{G}_{L^1_T L^2_x} .
\end{align*}

These estimates above, together with \cref{series} allow us to control \cref{fracduhamel} in terms of $\eta_1$ as follows:
\begin{equation}\label{eta1}
\eta_1 (\Phi (v))\lesssim (1+ T^{\frac{1}{2}})\, (1+ T^{0+})\, \norm{\langle \nabla \rangle^{\frac{1}{4}} f}_{L^2_x} + (1+T^{0+} +T^{\frac{1}{2}})\, \norm{\langle \nabla \rangle^{\frac{1}{2}} (|v|^2 v)}_{L^1_T L^2_x} .
\end{equation}
Note the gain in derivatives in this norm, almost $\frac{1}{2}$ for the linear part and almost $\frac{1}{4}$ in the nonlinear part.

Now we look at the second norm of \cref{fracduhamel}. \Cref{L21,L22,L23} combined with \cref{series} yield
\begin{equation}\label{eta2}
\eta_2 (\Phi (v))\lesssim \norm{\langle \nabla \rangle^{\frac{1}{4}} f}_{L^2_x} + T^{\frac{3}{8}}\, (1+ T^{\frac{1}{8}}) \, \norm{ \langle\nabla\rangle^{\frac{1}{2}} (|v|^2 v)}_{L^2_{T,x}} .
\end{equation}
Note that in this case we clearly lose $\frac{1}{4}$ derivatives in the nonlinear part. 

Finally, we take the third norm, which we control thanks to \cref{maxth} and \cref{series}.
\begin{equation}\label{eta3}
\eta_3 (\Phi (v))\lesssim \norm{\langle \nabla \rangle^{\frac{1}{4}} f}_{L^2_x}+ T^{\frac{3}{8}}\, \norm{ \langle\nabla\rangle^{\frac{1}{2}} (|v|^2 v)}_{L^2_{T,x}} .
\end{equation}
Let us highlight once again the loss of $\frac{1}{4}$ derivatives in the linear part and $\frac{1}{2}$ in the nonlinear part.

Therefore, we can simultaneously estimate \cref{eta1,eta2,eta3} by controlling the quantity $\norm{ \langle\nabla\rangle^{\frac{1}{2}} (|v|^2 v)}_{L^2_{T,x}}$ in terms of $\eta_1 , \eta_2$ and $\eta_3$.
To this end, we introduce the following:

\begin{lemma}\label{AvoidChainRule} When $p$ is an odd integer, we have that
\[\norm{ \langle\nabla\rangle^{s} (|v|^{p-1} v)}_{L^2_{T,x} }\lesssim \norm{ \langle\nabla\rangle^{s}v}_{L^{\infty}_x L^2_T}\, \norm{v}_{L^{2(p-1)}_x L^{\infty}_T}^{p-1} .\]
\end{lemma}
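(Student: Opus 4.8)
The plan is to reduce the estimate to a pointwise algebra-type identity, exploiting crucially the fact that $p$ is an \emph{odd} integer so that $|v|^{p-1}v = v^{\frac{p+1}{2}}\bar v^{\frac{p-1}{2}}$ is an honest polynomial in $v$ and $\bar v$ with no absolute values. This means $\langle\nabla\rangle^s$ acting on $|v|^{p-1}v$ can be handled by a fractional Leibniz rule (the Kato--Ponce inequality) without ever needing a fractional chain rule, which is the whole point of stating it separately from the general remark about noninteger $p$.

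First I would write $|v|^{p-1}v$ as a product of $p$ factors, each equal to $v$ or $\bar v$, and apply the fractional Leibniz rule to distribute $\langle\nabla\rangle^s$: up to constants, $\langle\nabla\rangle^s(w_1\cdots w_p)$ is bounded in $L^2_{T,x}$ by a sum of terms in which $\langle\nabla\rangle^s$ lands on one factor, say $w_1$, and the remaining $p-1$ factors are left untouched, i.e. terms of the shape $\|(\langle\nabla\rangle^s w_1)\, w_2\cdots w_p\|_{L^2_{T,x}}$. Since $|\langle\nabla\rangle^s \bar v| $ has the same profile as $|\langle\nabla\rangle^s v|$ (the Fourier multiplier is even), every such term is controlled by $\|(\langle\nabla\rangle^s v)\,\prod_{j=2}^{p} v_j\|_{L^2_{T,x}}$ with each $|v_j|=|v|$. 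Then I apply H\"older in the mixed norm: for fixed $x$, H\"older in $t$ on $[0,T]$ puts $\langle\nabla\rangle^s v$ in $L^2_T$ and each of the other $p-1$ factors in $L^\infty_T$, giving a pointwise-in-$x$ bound by $\|\langle\nabla\rangle^s v(x,\cdot)\|_{L^2_T}\prod_{j=2}^p\|v(x,\cdot)\|_{L^\infty_T}$; then H\"older in $x$ with exponents $\infty$ on the first factor and $2(p-1)$ on each of the remaining $p-1$ factors (note $\tfrac{1}{\infty}+ (p-1)\cdot\tfrac{1}{2(p-1)}=\tfrac12$, matching the $L^2_x$ on the left) produces exactly $\|\langle\nabla\rangle^s v\|_{L^\infty_x L^2_T}\,\|v\|_{L^{2(p-1)}_x L^\infty_T}^{p-1}$.

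The main subtlety, and the step I would be most careful about, is the justification of the fractional Leibniz rule in this mixed-norm setting: Kato--Ponce is usually stated for $L^p(\RR^n)$ in the spatial variable with a Sobolev exponent $s>0$, whereas here the derivative is spatial but the norm is the mixed $L^2_{T,x}$, and one must also allow one factor to carry a $L^\infty_T$ (not $L^2_T$) norm. The clean way around this is to treat the time variable as a parameter throughout: apply the spatial fractional Leibniz rule at each fixed $t$ in the form $\|\langle\nabla\rangle^s(w_1\cdots w_p)\|_{L^2_x}\lesssim \sum \|\langle\nabla\rangle^s w_1\|_{L^2_x}\prod_{j\ge 2}\|w_j\|_{L^\infty_x}$ — which is a valid endpoint Kato--Ponce estimate with one $L^2_x$ and the rest $L^\infty_x$ — and only afterwards take the $L^2_T$ norm and rearrange with Minkowski/H\"older. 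One should double-check that this particular Kato--Ponce endpoint (one $L^2$ factor, the others $L^\infty$, $s>0$) is admissible; it is, and it is precisely the version used by Kenig--Ponce--Vega, so I would cite \cite{KPV} for it exactly as the paper's remarks suggest. Apart from this, the argument is a routine chain of H\"older inequalities and requires no further ideas.
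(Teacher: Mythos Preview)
Your overall plan---write $|v|^{p-1}v$ as a polynomial in $v,\bar v$, distribute $\langle\nabla\rangle^s$ onto a single factor, then H\"older first in $t$ and then in $x$---is exactly what the paper does. The paper's version of the Leibniz step is the ``factors-stay-together'' form
\[
\norm{\langle\nabla\rangle^s(|v|^{p-1}v)}_{L^2_x}\lesssim \norm{(\langle\nabla\rangle^s v)\,|v|^{p-1}}_{L^2_x}+\norm{(\langle\nabla\rangle^s \bar v)\,v^{k+1}\bar v^{k-1}}_{L^2_x},
\]
obtained (in the paper's words ``by the Plancherel theorem'') from the elementary bound $\langle\xi_1+\cdots+\xi_p\rangle^s\lesssim\sum_j\langle\xi_j\rangle^s$ on the Fourier side. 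After this one switches to $L^2_xL^2_T$, applies H\"older in $t$ at each fixed $x$, and only then H\"older in $x$ with the exponents $\infty$ and $2(p-1)$.

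The gap is in your ``clean way around this''. Applying Kato--Ponce at fixed $t$ in the form $\norm{\langle\nabla\rangle^s(w_1\cdots w_p)}_{L^2_x}\lesssim\sum\norm{\langle\nabla\rangle^s w_1}_{L^2_x}\prod_{j\ge 2}\norm{w_j}_{L^\infty_x}$ prematurely commits the spatial exponents: the derivative factor is now in $L^2_x$ and the undifferentiated factors in $L^\infty_x$. After taking $L^2_T$ you are left with a product of scalar functions of $t$, and no Minkowski or H\"older rearrangement can convert $\norm{\langle\nabla\rangle^s v}_{L^2_TL^2_x}$ into $\norm{\langle\nabla\rangle^s v}_{L^\infty_xL^2_T}$, nor $\norm{v}_{L^\infty_{T,x}}$ into $\norm{v}_{L^{2(p-1)}_xL^\infty_T}$. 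The whole point of keeping the product $(\langle\nabla\rangle^s v)\,|v|^{p-1}$ intact inside the $L^2_x$ norm is precisely so that the subsequent H\"older in $x$ is free to put $L^\infty_x$ on the derivative factor and $L^{2(p-1)}_x$ on the rest; once you separate the factors at the $L^2_x$ stage that freedom is lost. So your proposed justification does not prove the lemma as stated---you need the pointwise (unseparated) Leibniz form, as in the paper, not the separated Kato--Ponce.
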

\begin{proof}
When $p=2k+1$ is an odd integer, we have $|u|^{p-1}u=u^{k+1} \bar{u}^k$. Then by the Plancherel theorem,
\[\norm{ \langle\nabla\rangle^{s} (|v|^{p-1} v)}_{L^2_x}\lesssim \norm{(\langle\nabla \rangle^{s} v) |v|^{p-1}}_{L^2_x}+ \norm{ v\, \langle\nabla\rangle^{s} (|v|^{p-1})}_{L^2_x} .\]
Now we iterate the argument for the last term to argue that the derivative can only hit $v$ or $\bar{v}$. In other words,
\[\norm{ \langle\nabla\rangle^{s} (|v|^{p-1} v)}_{L^2_x}\lesssim \norm{(\langle\nabla \rangle^{s} v) |v|^{p-1}}_{L^2_x}+\norm{(\langle\nabla \rangle^{s} \bar{v} ) \, v^{k+1}\, \bar{v}^{k-1} }_{L^2_x} .\]
After taking the $L^2_T$ norm, and interchaging the order of integration in $T$ and $x$, we can use the H\"older inequality to control each term in terms of 
\[\norm{\langle\nabla\rangle^{s}v}_{L^{\infty}_x L^2_T}\, \norm{v}_{L^{2(p-1)}_x L^{\infty}_T}^{p-1}.\]
\end{proof}

 Using \cref{AvoidChainRule} for the case $p=3$, we find that
\[\norm{ \langle\nabla\rangle^{\frac{1}{2}} (|v|^2 v)}_{L^2_T L^2_x}\lesssim \eta_1 (v) \, \eta_3 (v)^2 \, .\]

By assuming that $T\leq 1$, \cref{eta1,eta2,eta3} can be rewritten as
\[\Lambda_T (\Phi (v))\lesssim \norm{f}_{H^{1/4}_x}+ T^{\frac{3}{8}} \, \eta_1 (v) \, \eta_3 (v)^2 \lesssim \norm{f}_{H^{1/4}_x}+ T^{\frac{3}{8}} \, \Lambda_T (v)^3 .\]

Pick $B_R = \{ v\in X_T \mid \Lambda_T (v) < R \}$ and $f$ such that $\norm{f}_{H^{1/4}_x}\lesssim \frac{R}{2}$. Then we have that $\Phi : B_R \longrightarrow B_R$ as long as $T^{\frac{3}{8}} R^3 \lesssim \frac{R}{2}$, which happens for $T$ small enough. We still must prove that $\Phi (v)$ is actually in $C ([0,T],H^{\frac{1}{4}}(\RR ) )$, but we do that for the general case in \cref{cont}, which can be found in \cref{sec: appB} below.

Now we prove that $\Phi$ is a contraction. By using the same ideas as in \cref{eta1,eta2,eta3} together with $T\leq 1$ we quickly find
\[\Lambda_T (\Phi (v)- \Phi (u))\lesssim T^{\frac{3}{8}} \, \norm{ \langle\nabla\rangle^{\frac{1}{2}} (|v|^2 v - |u|^2 u)}_{L^2_{T,x}} .\]
In order to deal with this last term, we use the following estimate based on the fundamental theorem of calculus:
\begin{equation*}
\norm{|v|^2 v - |u|^2 u}_{L^2_T H^{1/2}_x} 
\lesssim \int_0^1 \norm{ |v + \lambda (u-v) |^{2} (u-v)}_{L^2_T H^{1/2}_x} \, d\lambda .\\
\end{equation*}
Then a combination of \cref{AvoidChainRule} and the H\"older inequality yields
\[\Lambda_T (\Phi (v)- \Phi (u))\lesssim T^{\frac{3}{8}}\, R^2 \,\Lambda_T (v-u) ,\]
so that by making $T$ smaller (if necessary), we can arrange $C T^{\frac{3}{8}}\, R^2<1$. Therefore $\Phi$ is a contraction on the ball $B_R = \{ v \mid \Lambda_T (v) < R\sim \norm{f}_{H^{1/4}_x} \}$, which shows that $\Phi$ has the desired contraction property.

\section{Ill-posedness}
\label{sec: ill-posedness}

\subsection{Setup}

We consider the initial value problem:
\begin{equation}\label{perturbed}
\left\{
    \begin{array}{ll}
    i^{\beta}\partial_t^{\beta} u_{\epsilon} & = (-\Delta_x)^{\frac{\alpha}{2}}u_{\epsilon} + \mu |u_{\epsilon}|^{p-1}u_{\epsilon},\\
    u_{\epsilon}|_{t=0} & =\epsilon \, f ,
    \end{array}
\right.
\end{equation}
with $\alpha\in (0,2)$, $\beta\in (0,1)$, $\sigma:=\frac{\alpha}{\beta}$, $\mu=\pm 1$, small $\epsilon$ and smooth initial data. From now on we will consider the case $\mu=1$, since both cases are handled similarly.

Note that the initial data-to-solution map from $H^s(\RR)$ to $C_t([0,T],H^s_x(\RR))$ of \cref{linear} will be $C^{k}$ if and only if the map that sends 
\[ f\in H^s(\RR)\mapsto (\partial^{k}_{\epsilon}u_{\epsilon} )\Big |_{\epsilon=0}\in C_t([0,T],H^s_x(\RR))\]
is continuous.

More precisely, suppose that $u_{\epsilon}$ admits an asymptotic expansion of type 
\[ u_{\epsilon}= \epsilon u_1+\epsilon^2 u_2 + \ldots \]
By plugging this into \cref{perturbed} and matching the coefficients of the powers of $\epsilon$, we find that $u_1$ is simply the solution to the linear problem with initial datum $f$, i.e.
\begin{equation}\label{order1}
\left\{
    \begin{array}{ll}
    i^{\beta}\partial_t^{\beta} u_1 & = (-\Delta_x)^{\frac{\alpha}{2}}u_1\\
    u_1|_{t=0} & =f.
    \end{array}
\right.
\end{equation}
In the same way, $u_2=\ldots =u_{p-1}=0$, since they solve:
\[\left\{
    \begin{array}{ll}
    i^{\beta}\partial_t^{\beta} u_j & = (-\Delta_x)^{\frac{\alpha}{2}}u_j\\
    u_j|_{t=0} & =0.
    \end{array}
\right.\]
Finally $u_p$ will satisfy:
\begin{equation}\label{orderp}
\left\{
    \begin{array}{ll}
    i^{\beta}\partial_t^{\beta} u_p & = (-\Delta_x)^{\frac{\alpha}{2}}u_p + |u_1|^{p-1}u_1,\\
    u_p|_{t=0} & =0.
    \end{array}
\right.
\end{equation}
By the fractional Duhamel principle, and denoting the nonlinearity $g(u):=|u|^{p-1}u$ as usual, we obtain that 
\[ u_p(t)=i^{\beta} \int_0^t (t-s)^{\beta-1} \left(E_{\beta,\beta}((t-s)^{\beta} i^{-\beta} |\cdot |^{\alpha}) \widehat{g(u_1)}(s,\cdot)\right)^{\vee} \, ds.\]

Suppose that we have a solution $u_j$ for each of these IVPs we mentioned before (only $j\in\{1,p\}$ are nonzero), which exists in some common time interval $[0,T^{\ast}]$, for some small enough $T^{\ast}$.
Our goal is to show that the operator that maps $f$ to $u_p$ is not continuous from $H^s_x$ to $C([0,T^{\ast}],H^s_x)$ for $s<s_c=\frac{1}{2}-\frac{\alpha}{p-1}$, however small $T^{\ast}$ is. 

We consider the following family of initial data:
\[ \widehat{f_N}(\zeta):= N^{\varepsilon-s} \,\chi_{[N-\frac{1}{N^{2\varepsilon}},N]}(\zeta)\]
for some $\varepsilon>-\frac{1}{2}$ which will be specified later. Here $\chi_A$ denotes the characteristic function on the set $A$. Note that the coefficient has been chosen so that 
\[ \norm{f_N}_{H^s}=1+O(N^{-1-2\varepsilon})\]
for large $N$.
\subsection{Computations}

Now we approximate $u_1$, the solution of the linear problem \cref{order1}. Recall that as $t^{\beta} |\zeta|^{\alpha}\rightarrow\infty$, we have the asymptotics:
\[ E_{\beta}(t^{\beta} i^{-\beta} |\zeta |^{\alpha})= \frac{1}{\beta}\, e^{-it|\zeta|^{\sigma}}+c\, t^{-\beta}|\zeta|^{-\alpha} + \mbox{l.o.t} ,\]
where $\mbox{l.o.t}$ denotes lower order terms with respect to the product $t^{-\beta}|\zeta|^{-\alpha}$, see \cref{Lasymp}. Note that we are working with $\zeta$ in the interval $[N-\frac{1}{N^{2\varepsilon}},N]$, and therefore $t^{-\beta}|\zeta|^{-\alpha}$ and $t^{-\beta} N^{-\alpha}$ are comparable.

\begin{proposition}\label{u1prop} Suppose that $\varepsilon>\frac{\sigma-1}{2}$. Then the following approximation is valid for large $t^{\beta}N^{\alpha}$:
\begin{equation}\label{u1}
u_1(t,x)= \left(N^{\varepsilon-s} \, e^{-itN^{\sigma}}+N^{\varepsilon-s}\, t^{-\beta}\, N^{-\alpha}+\mbox{l.o.t}\right) \left(\frac{1-e^{-ixN^{-2\varepsilon}}}{ix}\right)\, e^{ixN}.
\end{equation}
\end{proposition}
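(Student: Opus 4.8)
The goal is to compute $u_1(t,x)$, the solution of the linear Cauchy problem \cref{order1} with initial data $f_N$, and show it agrees with the right-hand side of \cref{u1} up to the stated lower-order terms, in the regime $t^\beta N^\alpha\gg 1$. The plan is to start from the exact Fourier representation \cref{Lsolution}, namely
\[
u_1(t,x)=\int_{\RR} e^{ix\zeta}\,\widehat{f_N}(\zeta)\,E_\beta(i^{-\beta}t^\beta|\zeta|^\alpha)\,d\zeta,
\]
and insert $\widehat{f_N}(\zeta)=N^{\varepsilon-s}\chi_{[N-N^{-2\varepsilon},N]}(\zeta)$. Since the frequency support is contained in a small interval near $\zeta=N>0$, on this interval $|\zeta|^\alpha$ and $|\zeta|^{\sigma}$ are smooth and we may freely use the large-argument asymptotic expansion \cref{Lasymp} of the Mittag--Leffler function: with $z=i^{-\beta}t^\beta|\zeta|^\alpha$ one has $E_\beta(z)=\frac1\beta\exp(z^{1/\beta})-\frac{z^{-1}}{\Gamma(1-\beta)}+O(|z|^{-2})$, and since $z^{1/\beta}=i^{-1}t|\zeta|^{\sigma}=-it|\zeta|^\sigma$ and $z^{-1}=i^\beta t^{-\beta}|\zeta|^{-\alpha}$, this reads $E_\beta(i^{-\beta}t^\beta|\zeta|^\alpha)=\frac1\beta e^{-it|\zeta|^\sigma}+c\,t^{-\beta}|\zeta|^{-\alpha}+\text{l.o.t.}$, valid precisely when $t^\beta|\zeta|^\alpha\to\infty$, which is guaranteed for $\zeta\in[N-N^{-2\varepsilon},N]$ once $t^\beta N^\alpha$ is large (here one uses $\varepsilon>-\frac12$ so the interval length $N^{-2\varepsilon}$ is controlled and $|\zeta|\sim N$ throughout).

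The next step is to substitute this expansion into the integral and evaluate term by term. For the monotone (non-oscillatory) terms, $|\zeta|^{-\alpha}$ is essentially constant $\approx N^{-\alpha}$ over the short interval, so
\[
N^{\varepsilon-s}\int_{N-N^{-2\varepsilon}}^{N} e^{ix\zeta}\,t^{-\beta}|\zeta|^{-\alpha}\,d\zeta
= N^{\varepsilon-s}\,t^{-\beta}N^{-\alpha}\int_{N-N^{-2\varepsilon}}^{N} e^{ix\zeta}\,d\zeta\,\big(1+\text{l.o.t.}\big),
\]
and the remaining integral $\int_{N-N^{-2\varepsilon}}^{N}e^{ix\zeta}\,d\zeta=e^{ixN}\frac{1-e^{-ixN^{-2\varepsilon}}}{ix}$ produces exactly the spatial profile factor $\big(\frac{1-e^{-ixN^{-2\varepsilon}}}{ix}\big)e^{ixN}$. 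For the oscillatory term one similarly replaces $e^{-it|\zeta|^\sigma}$ by $e^{-itN^\sigma}$; the error from freezing the phase is controlled because $|\zeta|^\sigma = N^\sigma + O(N^{\sigma-1}N^{-2\varepsilon})$ on the interval, so $|t|\zeta|^\sigma - tN^\sigma|\lesssim t N^{\sigma-1-2\varepsilon}$, and the hypothesis $\varepsilon>\frac{\sigma-1}{2}$ is exactly what makes this phase error negligible (it forces $N^{\sigma-1-2\varepsilon}\to 0$, i.e. the oscillation across the interval is slow). Assembling the three contributions gives \cref{u1}.

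\textbf{Main obstacle.} The delicate point is making the term-by-term replacement rigorous while tracking what ``l.o.t.'' means uniformly. One must be careful that ``lower order'' is measured relative to the dominant term $t^{-\beta}N^{-\alpha}$ (as stated in the paragraph preceding the proposition), and that the errors incurred by (i) truncating the Mittag--Leffler asymptotic series at the stated order, (ii) replacing $|\zeta|^{-\alpha}$ by $N^{-\alpha}$, and (iii) freezing the phase $|\zeta|^\sigma\rightsquigarrow N^\sigma$, are all subordinate to this dominant term in the relevant regime $t^\beta N^\alpha\to\infty$. In particular, the condition $\varepsilon>\frac{\sigma-1}{2}$ must be shown to suffice for (iii), and one should verify that $t$ staying in a bounded interval $[0,T^\ast]$ together with $t^\beta N^\alpha\to\infty$ (which forces $N\to\infty$) keeps all error ratios going to zero. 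The estimate of the oscillatory integral $\int e^{ix(\zeta)-it|\zeta|^\sigma}d\zeta$ — showing that the variation of the phase over the tiny $\zeta$-window is harmless — is the crux; everything else is bookkeeping with the explicit elementary integral $\int e^{ix\zeta}d\zeta$.
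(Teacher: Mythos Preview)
Your proposal is correct and follows essentially the same approach as the paper: insert the asymptotic expansion \cref{Lasymp} of $E_\beta$ into the Fourier representation, freeze the phase $|\zeta|^\sigma\rightsquigarrow N^\sigma$ using $|\zeta|^\sigma-N^\sigma=O(N^{\sigma-1-2\varepsilon})$ together with the hypothesis $\varepsilon>\frac{\sigma-1}{2}$, replace $|\zeta|^{-\alpha}$ by $N^{-\alpha}$ on the short interval, and evaluate the remaining elementary integral $\int_{N-N^{-2\varepsilon}}^N e^{ix\zeta}\,d\zeta$ explicitly. The paper's proof carries out exactly these steps, in the same order and with the same error bookkeeping.
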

\begin{proof}
For large $t^{\beta}N^{\alpha}$, we may use the asymptotics of $E_{\beta}$ to write:
\begin{align*}
u_1(t,x) = & \int_{\RR} E_{\beta}(t^{\beta}i^{-\beta}|\zeta|^{\alpha}) \widehat{f_N}(\zeta) e^{ix\zeta}\,d\zeta
 \\
  = & \, N^{\varepsilon-s} \,e^{-itN^{\sigma}}\,\int_{\zeta\in [N-\frac{1}{N^{2\varepsilon}},N]} e^{-it(|\zeta|^{\sigma}-N^{\sigma})} e^{ix\zeta}\,d\zeta\\
 & +N^{\varepsilon-s}\, \int_{\zeta\in [N-\frac{1}{N^{2\varepsilon}},N]} t^{-\beta}|\zeta|^{-\alpha}\, e^{ix\zeta}\,d\zeta  + \mbox{l.o.t.}
\end{align*}

By the binomial approximation and the Taylor series for the exponential, we have:
 \begin{equation}\label{futurecond2}
 e^{-it(|\zeta|^{\sigma}-N^{\sigma})}=1+O(t\,N^{-2\varepsilon+\sigma-1}).
 \end{equation} 
 
 At this stage, we add the condition $\frac{\sigma-1}{2}<\varepsilon$ for the error to be small (and decreasing with $N$). Note that this allows the possibility of $\varepsilon$ being negative if $\sigma$ falls below 1. 

 Finally, we can compute the first integral:
\begin{align*}
  \int_{\zeta\in [N-\frac{1}{N^{2\varepsilon}},N]} e^{-it(|\zeta|^{\sigma}-N^{\sigma})} e^{ix\zeta}\,d\zeta & = \left(1+ O(tN^{-2\varepsilon+\sigma-1})\right) \int_{\zeta\in [N-\frac{1}{N^{2\varepsilon}},N]} e^{ix\zeta} d\zeta \\ 
  &=e^{ixN}\, \left(1+ O(tN^{-2\varepsilon+\sigma-1})\right)\, \left(\frac{1-e^{-ixN^{-2\varepsilon}}}{ix}\right) .
\end{align*}

Using the fact that $|\zeta|^{-\alpha}-N^{-\alpha}=O(N^{-\alpha-1-2\varepsilon})$, we obtain for the second integral:
\[ N^{\varepsilon-s}\, \int_{\zeta\in [N-\frac{1}{N^{2\varepsilon}},N]} t^{-\beta}|\zeta|^{-\alpha}\, e^{ix\zeta}\,d\zeta=
N^{\varepsilon-s}t^{-\beta}N^{-\alpha} e^{ixN}\,\left(\frac{1-e^{-ixN^{-2\varepsilon}}}{ix}\right)+ \mbox{l.o.t},\]

Combining these two, we obtain \cref{u1}.
\end{proof}

We may now compute the nonlinearity (up to a constant). From now on, we use the notation $x^p:=|x|^{p-1} x$ for simplicity.
\[  g(u_1)(t,x) = \left( N^{p(\varepsilon-s)} \, e^{-itN^{\sigma}}+N^{p(\varepsilon-s)}\, t^{-p\beta}\, N^{-p\alpha}+\mbox{l.o.t}\right) \, \left(\frac{1-e^{-ixN^{-2\varepsilon}}}{ix}\right)^p \, e^{ixN}.  \]

We also compute its Fourier transform:
\begin{equation}\label{FTgu1}
  \widehat{g(u_1)}(t,\xi) = \left( N^{p(\varepsilon-s)} \, e^{-itN^{\sigma}}+N^{p(\varepsilon-s)}\, t^{-p\beta}\, N^{-p\alpha}+\mbox{l.o.t}\right)\, h_{p,N}(\xi),
 \end{equation}
 where
 \begin{equation}
  h_{p,N}(\xi):=\int_{\RR} \left(\frac{1-e^{-ixN^{-2\varepsilon}}}{ix}\right)^p \, e^{ix(N-\xi)}\, dx.
 \end{equation}

Now we may approximate $u_p$, the solution of \cref{orderp}, by the fractional Duhamel formula (see \cref{inteq}). There are three important intervals when doing this: $J_1=[0,CN^{-\sigma}]$, $J_2=[CN^{-\sigma},T-C|\xi|^{-\sigma}]$ and $J_3=[T-C|\xi|^{-\sigma},T]$, where $C$ is some large constant so that the asymptotics we have mentioned are valid.
\begin{align}
\widehat{u_{p,N}}(T,\xi) & = \sum_{j=1}^{3} \, \int_{J_j} (T-t)^{\beta-1} \, E_{\beta,\beta}((T-t)^{\beta}i^{-\beta} |\xi|^{\alpha}) \widehat{g(u_1)}(t,\xi)\, dt \label{up}\\
& =: I+I\!I+I\!I\!I,\nonumber
\end{align}
where $E_{\beta,\beta}$ is the generalized Mittag-Leffler function given in \cref{genML}.

The contributions of integrating over the intervals $J_1$ and $J_3$ will later be shown to produce lower order terms, and thus do not affect the leading term of these computations, which is given by the integration over $J_2$.
Note that the fact that $J_2$ is nonempty depends on 
\begin{equation}\label{futurecond1}
T>C|\xi|^{-\sigma}+ CN^{-\sigma}
\end{equation} 
for some large constant $C$. We will see later that our choice of $\xi$ (proportional to $N$) will allow $T$ (and the lifespan of $u_p$) to be as small as needed despite this condition.

We are now ready to give a sketch of what the main argument will be. If the initial data-to-solution map were $C^p$, we should have a bound of the type
\[ \norm{f}_{H^s} \gtrsim \norm{u_p}_{C_t([0,T^{\ast}], H^s_x(\RR))} \geq \norm{u_p(T)}_{H^s_x(\RR)}, \]
valid for every initial data $f$ and every $T\leq T^{\ast}$. When plugging in $f_N$, we obtain
\begin{equation}\label{mainargument}
 1\gtrsim\norm{\langle\cdot\rangle^{s}\widehat{u_{p,N}}(T)}_{L^2_{\xi}(\RR)}\geq \norm{\langle\cdot\rangle^{s}\widehat{u_{p,N}}(T)}_{L^2_{\xi}(\mathcal{I}_N)},
 \end{equation}
where $\mathcal{I}_N$ is some interval of $\xi$ that we may choose to optimize our results. One would like to take $\mathcal{I}_N$ to be $\RR$, but the approximations we will develop for $u_{p,N}$ will not be valid nor relevant on such a large set.

We claim that the optimal choice is essentially the following:
\begin{equation}\label{optimalinterval}
\mathcal{I}_N:=[N-N^{-2\delta},N]
\end{equation}
for any $\delta=\varepsilon+$. This choice will be justified in \cref{hpNprop,optimalityofINp} below. 

Note also that for $\xi\in \mathcal{I}_N$, condition \cref{futurecond1} is satisfied for any lifespan $T$, no matter how small, as long as $N\geq N_0 (T)\sim T^{-\frac{1}{\sigma}}$. 

We now present the asymptotic behavior of $u_{p,N}$ given in \cref{up}.

\begin{proposition}\label{leadingorder} For $\xi\in\mathcal{I}_N$ and all $N\geq N_0 (T)\sim T^{-\frac{1}{\sigma}}$, the following approximation holds:
\[
\widehat{u_{p,N}}(T,\xi)=N^{p(\varepsilon-s)}\, h_{p,N}(\xi)\, e^{-iT|\xi|^{\sigma}}\,N^{\sigma-\alpha}\, \, T + \mbox{l.o.t.}
 \]
\end{proposition}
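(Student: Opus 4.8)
The plan is to extract the main term from the middle time interval $J_2$, where the two asymptotic expansions involved --- \cref{NLasymp} for $(T-t)^{\beta-1}E_{\beta,\beta}$ and \cref{FTgu1} for $\widehat{g(u_1)}$ --- are simultaneously valid, and to absorb everything else into the lower order terms. First I would split $\widehat{u_{p,N}}(T,\xi)=I+I\!I+I\!I\!I$ as in \cref{up}. On $J_2=[CN^{-\sigma},T-C|\xi|^{-\sigma}]$ one has $tN^{\sigma}\ge C$ (so the asymptotics of $E_{\beta}$ behind \cref{FTgu1} apply) and $(T-t)|\xi|^{\sigma}\ge C$ (so \cref{NLasymp}, with $t$ replaced by $T-t$, applies), provided $C$ is large. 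Retaining only the leading term of each expansion and combining the phases $e^{-i(T-t)|\xi|^{\sigma}}$ and $e^{-itN^{\sigma}}$ into $e^{-iT|\xi|^{\sigma}}e^{it(|\xi|^{\sigma}-N^{\sigma})}$, one gets
\[
I\!I=\tfrac{1}{\beta}\,i^{\beta-1}\,|\xi|^{\sigma-\alpha}\,N^{p(\varepsilon-s)}\,h_{p,N}(\xi)\,e^{-iT|\xi|^{\sigma}}\int_{J_2}e^{it(|\xi|^{\sigma}-N^{\sigma})}\,dt+\text{l.o.t.}
\]
(the constant $\tfrac{1}{\beta}i^{\beta-1}$, like the prefactor dropped in \cref{up}, being suppressed as agreed).

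The crucial point --- and the reason for the choice \cref{optimalinterval} --- is that for $\xi\in\mathcal{I}_N=[N-N^{-2\delta},N]$ one has $|\xi|^{\sigma}-N^{\sigma}=O(N^{\sigma-1-2\delta})$, and since $\delta=\varepsilon+>\tfrac{\sigma-1}{2}$ the phase $t(|\xi|^{\sigma}-N^{\sigma})=O(TN^{\sigma-1-2\delta})$ tends to $0$ as $N\to\infty$, uniformly on $[0,T]$. Hence $\int_{J_2}e^{it(|\xi|^{\sigma}-N^{\sigma})}\,dt=|J_2|+\text{l.o.t.}=T+\text{l.o.t.}$, using $|J_2|=T-C|\xi|^{-\sigma}-CN^{-\sigma}$ and $|\xi|^{-\sigma}\sim N^{-\sigma}\ll T$ for $N\ge N_0(T)$. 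Together with $|\xi|^{\sigma-\alpha}=N^{\sigma-\alpha}\big(1+O(N^{-1-2\delta})\big)$ this produces the claimed leading term $N^{p(\varepsilon-s)}h_{p,N}(\xi)e^{-iT|\xi|^{\sigma}}N^{\sigma-\alpha}T$.

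It then remains to verify that everything discarded is genuinely lower order relative to $N^{p(\varepsilon-s)}h_{p,N}(\xi)N^{\sigma-\alpha}T$. On $J_2$: pairing the $k\ge2$ tail of \cref{NLasymp} with either piece of \cref{FTgu1}, or the monotone piece $N^{p(\varepsilon-s)}t^{-p\beta}N^{-p\alpha}h_{p,N}(\xi)$ with the leading term of \cref{NLasymp}, and using $\alpha=\sigma\beta$ and $p\beta>1$ to carry out the $t$-integration, each contribution gains a factor $O(N^{-\sigma}/T)\to0$. For $J_1=[0,CN^{-\sigma}]$: one has $|\widehat{g(u_1)}(t,\xi)|\lesssim N^{p(\varepsilon-s)}|h_{p,N}(\xi)|$ uniformly in $t$ (since $E_{\beta}$ is bounded near the origin and obeys \cref{Lasymp} once $tN^{\sigma}$ is large), while $|(T-t)^{\beta-1}E_{\beta,\beta}((T-t)^{\beta}i^{-\beta}|\xi|^{\alpha})|\lesssim|\xi|^{\sigma-\alpha}$ for large $N$, so $|I|\lesssim|\xi|^{\sigma-\alpha}N^{p(\varepsilon-s)}|h_{p,N}(\xi)|\,CN^{-\sigma}$, again a factor $O(N^{-\sigma}/T)$ below the leading term. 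For $J_3=[T-C|\xi|^{-\sigma},T]$: now $(T-t)^{\beta-1}E_{\beta,\beta}(\cdots)$ is controlled by the integrable $(T-t)^{\beta-1}$ (because $E_{\beta,\beta}(0)$ is finite) and $|\widehat{g(u_1)}(t,\xi)|\lesssim N^{p(\varepsilon-s)}|h_{p,N}(\xi)|$, so $|I\!I\!I|\lesssim N^{p(\varepsilon-s)}|h_{p,N}(\xi)|\,|\xi|^{-\alpha}$, once more $O(N^{-\sigma}/T)$ below the leading term. Collecting these estimates yields the proposition.

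The main obstacle I anticipate is the bookkeeping: one has to make sure that every error is small \emph{uniformly for $\xi\in\mathcal{I}_N$}, and in particular keep careful track of which quantities genuinely tend to $0$ as $N\to\infty$ versus which are merely bounded by a small constant. The key structural input, which makes the estimate work, is that the two expansions are valid on complementary time scales ($tN^{\sigma}$ large and $(T-t)|\xi|^{\sigma}$ large), so that their common window $J_2$ is nonempty and, once $N\gg N_0(T)\sim T^{-1/\sigma}$, of length essentially $T$; this is precisely why the lifespan $T$ may be taken as small as one likes by taking $N$ correspondingly large. The phase estimate on $J_2$ itself is elementary once $\mathcal{I}_N$ is chosen with $\delta>\tfrac{\sigma-1}{2}$.
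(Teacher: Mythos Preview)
Your proposal is correct and follows essentially the same decomposition as the paper: extract the leading term from $I\!I$ via the double asymptotic expansion and the phase estimate on $\mathcal{I}_N$, then bound $I$ and $I\!I\!I$ by $N^{p(\varepsilon-s)}|h_{p,N}(\xi)|N^{-\alpha}$ using, respectively, boundedness of $E_{\beta}$ on $J_1$ and integrability of $(T-t)^{\beta-1}$ on $J_3$. One small caveat: the hypothesis $p\beta>1$ you invoke to integrate the monotone piece $t^{-p\beta}$ over $J_2$ need not hold in the generality of \cref{illtheorem} (e.g.\ $p=2$, $\beta<\tfrac12$), but the contribution is still lower order in that regime, giving a relative factor $O(T^{-p\beta}N^{-p\alpha})$ rather than $O(N^{-\sigma}/T)$.
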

\begin{proof} We divide the proof in several steps where we estimate the terms $I$, $I\!I$ and $I\!I\!I$ in \cref{up}.

{\bf Step 1.} We start by estimating $I\!I$ given by intergration over the interval $J_2$ defined in \cref{up}. For large $(T-t)|\xi|^{\sigma}$, we may combine the asymptotics of $E_{\beta,\beta}$ in \cref{NLasymp}
with \cref{FTgu1} to write:
\begin{align*}
I\!I  = & N^{p(\varepsilon-s)}\,h_{p,N}(\xi)\, e^{-iT|\xi|^{\sigma}}\,|\xi|^{\sigma-\alpha}\, \int_{J_2}   e^{it (|\xi|^{\sigma}-N^{\sigma})} \, dt  \\
&+ N^{p(\varepsilon-s)}\, h_{p,N}(\xi) \, \int_{J_2} |\xi|^{-2\alpha}(T-t)^{-1-\beta}\, e^{-itN^{\sigma}} \, dt \\
& + N^{p(\varepsilon-s)}\, h_{p,N}(\xi)\,\int_{J_2} |\xi|^{\sigma-\alpha} e^{-i(T-t)|\xi|^{\sigma}}\, t^{-p\beta}\,N^{-p\alpha} \, dt\\
& + N^{p(\varepsilon-s)}\, h_{p,N}(\xi)\,\int_{J_2} |\xi|^{-2\alpha}(T-t)^{-1-\beta}) \, t^{-p\beta}\,N^{-p\alpha} \, dt +\mbox{l.o.t.}\\
= & I\!I_1+I\!I_2+I\!I_3+I\!I_4 +\mbox{l.o.t.}
\end{align*}

We claim that the top order behavior is given by $I\!I_1$. Note that
\begin{align}\label{Nandxi} 
|\xi|^{\sigma-\alpha}\, \int_{J_2}   e^{it (|\xi|^{\sigma}-N^{\sigma})} \, dt = & N^{\sigma-\alpha}\, \int_{J_2} \, dt + N^{\sigma-\alpha} \, \int_{J_2} O(tN^{-2\delta + \sigma-1}) \, dt\\
=&  N^{\sigma-\alpha} \,T+\mbox{l.o.t.}
\end{align}

One can show that the integral in $I\!I_2$ satisfies the following bound:
\[ \Big | \int_{J_2} c |\xi|^{-2\alpha} (T-t)^{-1-\beta}\, e^{-itN^{\sigma}} \, dt\Big |\leq |\xi|^{-\alpha}.\]
Using the fact that $\xi\sim N$ (since we are in $\mathcal{I}_N$), it follows that $I\!I_2$ is a lower order term with respect to $I\!I_1$.

$I\!I_3$ is controlled as follows:
\[
\Big |\int_{J_2} (|\xi|^{\sigma-\alpha} e^{-i(T-t)|\xi|^{\sigma}}t^{-p\beta}\,N^{-p\alpha} \, dt \Big |\lesssim 
|\xi|^{\sigma-\alpha}\,N^{-p\alpha}\left( T^{1-p\beta} - |\xi|^{-\sigma+p\alpha}\right),
\]
For $\xi\in\mathcal{I}_N$ and fixed $T$, this is a lower order term with respect to $I\!I_1$. 

$I\!I_4$ is handled in the same way.

{\bf Step 2.} Now we show that the contribution of $I\!I\!I$ is negligible.
In $J_3$, the asymptotic formula for $E_{\beta,\beta}$ is not valid anymore, but we still have the following representation thanks to \cref{FTgu1}:
\[ 
I\!I\!I = h_{p,N}(\xi) \, \int_{J_3} (T-t)^{\beta-1} \, E_{\beta,\beta}((T-t)^{\beta}i^{-\beta} |\xi|^{\alpha}) \left(N^{p(\varepsilon-s)}\, e^{-itN^{\sigma}}+ \mbox{l.o.t}\right) \, dt.
\]
Because we are in $J_3$, we have that $(T-t)^{\beta}|\xi|^{\alpha}$ is bounded above by a constant. Since the function $E_{\beta,\beta}$ is continuous, it is also bounded in such an interval and therefore we have the estimate:
\begin{align*}
|I\!I\!I| & \leq  |h_{p,N}(\xi)|\, \int_{J_3} (T-t)^{\beta-1} \left( N^{p(\varepsilon-s)}+ \mbox{l.o.t}\right) \, dt\\
& \leq  |h_{p,N}(\xi)|\,\left( N^{p(\varepsilon-s)}+ \mbox{l.o.t}\right) \, \int_0^{C|\xi|^{-\sigma}} t^{\beta-1} \, dt \\
& \leq  |h_{p,N}(\xi)|\,\left( N^{p(\varepsilon-s)}+ \mbox{l.o.t}\right) \, |\xi|^{-\alpha}.
\end{align*}
Then since $\xi\in\mathcal{I}_N$, the term $|\xi|^{-\alpha}$ has order $N^{-\alpha}$, making the contribution of $I\!I\!I$ smaller than that of $I\!I$.

{\bf Step 3.} We finally deal with the contribution of $I$. In $J_1$ the asymptotics for $E_{\beta}$ are not valid and thus the formula we had for $\widehat{g(u_1)}$, namely \cref{FTgu1}, will not work. However, we can compute these elements again:
\begin{align*}
u_1(t,x) & = \int_{\RR} E_{\beta}(t^{\beta}i^{-\beta}|\zeta|^{\alpha}) \widehat{f_N}(\zeta) e^{ix\zeta}\,d\zeta\\
& = N^{\varepsilon-s} \,\left(E_{\beta}(t^{\beta}i^{-\beta}N^{\alpha})+O(N^{-2\varepsilon-1})\right)\, \int_{\zeta\in [N-\frac{1}{N^{2\varepsilon}},N]} e^{ix\zeta}\,d\zeta\\
 &= N^{\varepsilon-s} \,\left(E_{\beta}(t^{\beta}i^{-\beta}N^{\alpha})+O(N^{-2\varepsilon-1})\right)\, e^{ixN} \left(\frac{1-e^{ixN^{-2\varepsilon}}}{ix}\right).
 \end{align*}
after using the Taylor expansion $E_{\beta}(t^{\beta}i^{-\beta}|\zeta|^{\alpha})=E_{\beta}(t^{\beta}i^{-\beta}N^{\alpha})+ O(t^{\beta}N^{-2\varepsilon+\alpha-1})$ together with the fact that $t^{\beta}\lesssim N^{-\alpha}$ in $J_1$.

Therefore, 
\begin{equation*}
g(u_1)(t,x) = N^{p(\varepsilon-s)} \,\left(E_{\beta}(t^{\beta}i^{-\beta}N^{\alpha})^p+O(N^{-2\varepsilon-1})\right)\,e^{ixN} \, \left(\frac{1-e^{ixN^{-2\varepsilon}}}{ix}\right)^p
\end{equation*}
Remember that $E_{\beta}(t^{\beta}i^{-\beta}N^{\alpha})$ is bounded in $J_1$, and so it does not increase the order of the error.

Finally, 
\begin{equation*}
\widehat{g(u_1)}(t,\xi)= N^{p(\varepsilon-s)} \,\left(E_{\beta}(t^{\beta}i^{-\beta}N^{\alpha})^p+O(N^{-2\varepsilon-1})\right)\, h_{p,N}(\xi).
\end{equation*}

All in all,
\begin{multline*}
I= N^{p(\varepsilon-s)}\, h_{p,N}(\xi)\, \int_{J_1} (T-t)^{\beta-1} \, E_{\beta,\beta}((T-t)^{\beta}i^{-\beta} |\xi|^{\alpha}) \\
\left[E_{\beta}(t^{\beta}i^{-\beta}N^{\alpha})^p+O(N^{-2\varepsilon-1})\right]\, dt.
\end{multline*}

Combining \cref{NLasymp} with the fact that $E_{\beta}(t^{\beta}i^{-\beta}N^{\alpha})$ is bounded, and recalling that $\xi\in\mathcal{I}_N$, we obtain:
\begin{align*} |I|& \lesssim N^{p(\varepsilon-s)}\, |h_{p,N}(\xi)|\, \int_0^{CN^{-\sigma}} |\xi|^{\sigma-\alpha}\, \left(1+O(N^{-2\varepsilon-1})\right) \, dt\\
& =N^{p(\varepsilon-s)}\, |h_{p,N}(\xi)|\, N^{\sigma-\alpha} \, \left(1+O(N^{-2\varepsilon-1})\right)\, t\,\Big |_{0}^{CN^{-\sigma}}\\
&= N^{p(\varepsilon-s)}\, |h_{p,N}(\xi)|\,N^{-\alpha} \,\left(1+O(N^{-2\varepsilon-1})\right).
\end{align*}
Once again, this is of lower order than the contribution of $I\! I$.
\end{proof}

\subsection{The function $h_{p,N}$}

Our goal now is to understand the behavior of $h_{p,N}(\xi)$ with respect to $N$, which will motivate our choice of interval $\mathcal{I}_N$ in \cref{optimalinterval}.

\begin{proposition}\label{hpNprop} Suppose that $p>1$ and $\delta=\varepsilon+$, and $\xi\in \mathcal{I}_N$. Then
\begin{equation}\label{hpN}
h_{p,N}(\xi)=c_p \, N^{2\varepsilon (1-p)} + o(N^{2\varepsilon (1-p)}),
\end{equation}
uniformly in $\xi\in\mathcal{I}_N$.
\end{proposition}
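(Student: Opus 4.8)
\noindent The plan is to pull the whole $N$-dependent prefactor $N^{2\varepsilon(1-p)}$ out of the integral by one rescaling, and then to reduce the statement to the continuity at the origin of a single $N$- and $\xi$-independent function. Setting $m:=N^{-2\varepsilon}$ and making the change of variables $y=N^{-2\varepsilon}x$ in the integral defining $h_{p,N}$, and writing $\psi(y):=\frac{1-e^{-iy}}{iy}$ (with the $p$-th power understood as $|\psi|^{p-1}\psi$, consistently with the nonlinearity $g(u)=|u|^{p-1}u$; this is unambiguous for every $p>1$ and is exactly the reading that makes the constant below nonzero), one obtains
\[
h_{p,N}(\xi)=N^{2\varepsilon(1-p)}\,\Psi\!\left(N^{2\varepsilon}(N-\xi)\right),\qquad \Psi(w):=\int_{\RR}\psi(y)^{p}\,e^{iwy}\,dy .
\]
All the dependence on $N$ and $\xi$ is now carried by the single real number $N^{2\varepsilon}(N-\xi)$.

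Two facts then close the argument. First, $\Psi$ is continuous: since $\psi=\widehat{\mathbf{1}_{[0,1]}}$ one has $|\psi(y)|\lesssim\langle y\rangle^{-1}$, so $\psi^{p}\in L^{1}(\RR)$ \emph{precisely} because $p>1$, and hence $\Psi$ is continuous by Riemann--Lebesgue (indeed $\Psi\in C_0$). This integrability step is the one place where the hypothesis $p>1$ enters, and it is the point I expect to need the most care --- for $p=1$ the defining integral is only conditionally convergent and the rescaling degenerates. Second, the evaluation point goes to $0$ uniformly over $\mathcal{I}_N$: if $\xi\in\mathcal{I}_N=[N-N^{-2\delta},N]$ then $0\le N-\xi\le N^{-2\delta}$, so
\[
0\le N^{2\varepsilon}(N-\xi)\le N^{2(\varepsilon-\delta)},
\]
and $\delta=\varepsilon+$ forces $\varepsilon-\delta<0$, whence $N^{2(\varepsilon-\delta)}\to 0$ as $N\to\infty$.

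Combining these, for $N$ large all the evaluation points lie in the compact interval $[0,N^{2(\varepsilon-\delta)}]\subset[0,1]$, on which $\Psi$ is uniformly continuous, so
\[
\sup_{\xi\in\mathcal{I}_N}\bigl|\Psi(N^{2\varepsilon}(N-\xi))-\Psi(0)\bigr|\le\sup_{0\le w\le N^{2(\varepsilon-\delta)}}|\Psi(w)-\Psi(0)|\longrightarrow 0 .
\]
Taking $c_p:=\Psi(0)=\int_{\RR}\psi(y)^{p}\,dy$ --- an explicit, generically nonzero constant, computable via Fourier inversion --- gives $h_{p,N}(\xi)=c_p\,N^{2\varepsilon(1-p)}+o(N^{2\varepsilon(1-p)})$ uniformly in $\xi\in\mathcal{I}_N$, as claimed. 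Apart from the integrability step forcing $p>1$, the one structural point worth flagging is that it is precisely the choice of $\mathcal{I}_N$ abutting $\xi=N$, together with $\delta>\varepsilon$, that sends the argument of $\Psi$ to $0$; this is exactly the input being ``justified'' here for the subsequent optimality discussion.
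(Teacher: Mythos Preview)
Your proof is correct and shares its opening move with the paper: both perform the same rescaling $y=N^{-2\varepsilon}x$ to extract the factor $N^{2\varepsilon(1-p)}$ and reduce matters to the integral $\int_{\RR}\psi(y)^p\,e^{iyN^{2\varepsilon}(N-\xi)}\,dy$. From there, however, the arguments diverge. The paper proceeds by hand: it splits the integral at $|y|=N^{\delta-\varepsilon}$, Taylor-expands the exponential on the inner piece, and bounds the outer piece by $\int_{|y|>N^{\delta-\varepsilon}}|y|^{-p}\,dy$, obtaining the quantitative error $O(N^{(\delta-\varepsilon)(1-p)})+O(N^{\varepsilon-\delta})$. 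You instead recognise the integral as $\Psi(w)$ with $w=N^{2\varepsilon}(N-\xi)$, observe that $\psi^p\in L^1$ precisely when $p>1$ so that $\Psi$ is continuous, and note that $w\to 0$ uniformly for $\xi\in\mathcal{I}_N$ because $\delta>\varepsilon$.

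Your route is cleaner and isolates the role of the hypothesis $p>1$ more transparently (it is exactly the integrability threshold for $\psi^p$). The paper's splitting, on the other hand, yields an explicit rate for the $o(1)$ term, which your soft continuity argument does not; that rate is not actually used downstream, so nothing is lost for the purposes of the ill-posedness argument.
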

\begin{proof}
After a change of variables:
\begin{align*}
h_{p,N}(\xi) & =\int_{\RR} \left(\frac{1-e^{ixN^{-2\varepsilon}}}{ix}\right)^p \, e^{ix(N-\xi)} \, dx\\
& =N^{2\varepsilon(1-p)}\, \int_{\RR} \left(\frac{1-e^{ix}}{ix}\right)^p \, e^{ixN^{2\varepsilon}(N-\xi)} \, dx.
\end{align*}

Remember that $e^{ixN^{2\varepsilon} (N-\xi)}=1+O(xN^{2(\varepsilon-\delta)})$ as long as $x$ is small enough. We divide the integral into two pieces to exploit this fact. The first piece is estimated as follows:
\begin{align*}
\int_{|x|<N^{\delta-\varepsilon}} & \left(\frac{1-e^{ix}}{ix}\right)^p \, e^{ixN^{2\varepsilon}(N-\xi)} \, dx \\
 = &\int_{|x|<N^{\delta-\varepsilon}} \left(\frac{1-e^{ix}}{ix}\right)^p \, dx+
 \int_{|x|<N^{\delta-\varepsilon}} \left(\frac{1-e^{ix}}{ix}\right)^p \, O(xN^{2\varepsilon-2\delta}) \, dx\\
  = & \int_{\RR} \left(\frac{1-e^{ix}}{ix}\right)^p \, dx - \int_{N^{\delta-\varepsilon}}^{\infty} O(|x|^{-p}) \, dx \\
 & + O(N^{\varepsilon-\delta}) \,\int_{|x|<N^{\delta-\varepsilon}} \left(\frac{1-e^{ix}}{ix}\right)^p \, dx= c_p + O(N^{(\delta-\varepsilon)(1-p)}) + O(N^{\varepsilon-\delta}).
 \end{align*}
The second piece gives:
\begin{equation*}
\int_{|x|>N^{\delta-\varepsilon}} \left(\frac{1-e^{ix}}{ix}\right)^p \, e^{ixN^{2\varepsilon}(N-\xi)} \, dx =
\int_{|x|>N^{\delta-\varepsilon}} O(|x|^{-p})\,dx=O(N^{(\delta-\varepsilon)(1-p)}),
\end{equation*}
which concludes the proof.
\end{proof}

Finally, we show that $\mathcal{I}_N$ defined in \cref{optimalinterval} is indeed the best choice of interval for the purpose of maximizing $h_{p,N}$ with respect to $N$. 

\begin{proposition}\label{optimalityofINp} Outside an interval comparable (in width and center) to $[N-N^{-2\varepsilon},N]$, the function $h_{p,N}(\xi)$ will display an exponential decay in $N$. This means that our choice of interval $\mathcal{I}_N=[N-N^{-2\delta},N]$ for $\delta=\varepsilon+$ is essentially optimal.
\end{proposition}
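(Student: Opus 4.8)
The plan is to exploit the rescaling already present in the proof of \cref{hpNprop}, which exhibits $h_{p,N}$ as a fixed profile evaluated at a frequency dilated by $N^{2\varepsilon}$. Concretely, writing $a=N^{-2\varepsilon}$ and substituting $y=x/a$ as there, one has $h_{p,N}(\xi)=N^{2\varepsilon(1-p)}\,k_p\big((N-\xi)N^{2\varepsilon}\big)$, where $k_p(\tau):=\int_{\RR}\big(\tfrac{1-e^{iy}}{iy}\big)^p e^{iy\tau}\,dy$. The key observation is that $\big(\tfrac{1-e^{iy}}{iy}\big)^p$ is an entire function of $y$ of exponential type $\le p$ lying in $L^2(\RR)$ — here $p\ge 2$ enters, via $\big|\tfrac{1-e^{iy}}{iy}\big|\le\min(1,2/|y|)$ on $\RR$ and $\le 2e^{|\mathrm{Im}\,y|}/|y|$ off it — so the Paley--Wiener theorem forces $k_p$ to be supported in a fixed compact interval (indeed $k_p$ is, up to a constant, a cardinal $B$-spline). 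Therefore $h_{p,N}(\xi)$ vanishes identically once $(N-\xi)N^{2\varepsilon}$ leaves that interval, i.e.\ once $\xi$ leaves an interval of width $\sim N^{-2\varepsilon}$ whose centre lies within $O(N^{-2\varepsilon})$ of $N$ — an interval comparable, in width and centre, to $[N-N^{-2\varepsilon},N]$. Exact vanishing is a fortiori stronger than exponential decay in $N$, which settles the stated claim. (If one prefers not to invoke Paley--Wiener, the same follows by shifting the contour in $k_p$ upward or downward according to the sign of $\tau$ and using the $|y|^{-p}$ decay with $p\ge 2$ on horizontal lines, which yields $|k_p(\tau)|\lesssim_p(1+e^{pT})e^{-T|\tau|}$ for every $T>0$.)

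Next I would insert this into the main inequality \cref{mainargument} to see that $\mathcal{I}_N=[N-N^{-2\delta},N]$ with $\delta=\varepsilon+$ is the optimal frequency window. On $\mathcal{I}_N$ the dilated variable $(N-\xi)N^{2\varepsilon}$ lies in $[0,N^{2(\varepsilon-\delta)}]$ with $N^{2(\varepsilon-\delta)}\to 0$, so it stays near $0$, where by \cref{hpNprop} the profile is continuous with $k_p(0)=c_p\ne 0$; thus $h_{p,N}\sim c_p N^{2\varepsilon(1-p)}$ uniformly there, and \cref{leadingorder} gives
\[
\norm{\langle\cdot\rangle^{s}\widehat{u_{p,N}}(T)}_{L^2_\xi(\mathcal{I}_N)}^2\;\gtrsim\;N^{2s}\,\big|\,N^{p(\varepsilon-s)}\,c_p\,N^{2\varepsilon(1-p)}\,N^{\sigma-\alpha}\,T\,\big|^{2}\,N^{-2\delta}.
\]
Enlarging the window brings no gain: since $\widehat{u_{p,N}}(T,\xi)$ is $h_{p,N}(\xi)$ times scalars plus lower order terms, any part of the window past the width-$\sim N^{-2\varepsilon}$ support of $h_{p,N}$ contributes nothing (or only an exponentially small remainder), while the part near the edge of that support is exactly where the $B$-spline $k_p$ degenerates — it vanishes to order $p-1$ at the endpoints — so the pointwise lower bound $\gtrsim N^{2\varepsilon(1-p)}$ fails there. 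Hence the $N^{-2\varepsilon}$ scale cannot be beaten, and $\delta=\varepsilon+$ realizes the optimal trade-off between the width $N^{-2\delta}$ of $\mathcal{I}_N$ and the uniformity of the lower bound on it. Tracking the powers of $N$ in the display then shows the right-hand side stays bounded below — in fact diverges — precisely when $s<s_c=\tfrac12-\tfrac{\alpha}{p-1}$, contradicting the bound $\lesssim 1$ of \cref{mainargument} and completing the ill-posedness argument.

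I expect the main obstacle to be making the ``$k_p$ is compactly supported'' step airtight while tracking the several sign conventions at play: the exponents $e^{\pm ixN^{-2\varepsilon}}$, and, for the genuine nonlinearity $|u|^{p-1}u$ rather than the model $p$-th power used in the proof of \cref{hpNprop}, the fact that $|u_1|^{p-1}u_1$ carries $\tfrac{p+1}{2}$ copies of $\tfrac{1-e^{-ixN^{-2\varepsilon}}}{ix}$ and $\tfrac{p-1}{2}$ of its conjugate; this relocates the support of $h_{p,N}$ to an interval straddling $\xi=N$ (but of the same width $\sim N^{-2\varepsilon}$), with $\xi=N$ now an interior point — which is precisely what keeps $c_p\ne 0$. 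The contour-shift alternative avoids identifying the profile but needs uniform control of $k_p$ on horizontal lines, for which $\big|\tfrac{1-e^{iy}}{iy}\big|\le\min(1,2/|y|)$ and $p\ge 2$ are exactly what is needed. Everything else — substituting into \cref{mainargument}, bookkeeping exponents of $N$, and checking that $\delta=\varepsilon+$ is the balancing choice — is routine.
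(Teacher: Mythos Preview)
Your approach is correct and in fact yields a sharper conclusion than the paper's. The paper defines $F(\lambda)=\int_{\RR}\big(\tfrac{1-e^{-ix}}{ix}\big)^p e^{ix\lambda}\,dx$, uses the symmetry $F(\lambda)=F(1-\lambda)$, and then for $\lambda\notin[0,1]$ deforms the contour onto a horizontal line $\mathrm{Im}\,z=\pm 1$ to obtain $|F(\lambda)|\lesssim_p e^{-|\lambda|}$ --- exactly the contour-shift alternative you sketch in your parenthetical remark. You instead recognize that $\tfrac{1-e^{iy}}{iy}$ is (up to sign) the Fourier transform of $\chi_{[0,1]}$, so that $k_p$ is a cardinal $B$-spline with genuinely compact support; invoking Paley--Wiener gives the same conclusion without computing the convolution. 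Either way you get exact vanishing rather than mere exponential decay, which is strictly stronger than what the proposition claims and what the paper proves.

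Two small things to be careful about. First, the paper's notation $x^p:=|x|^{p-1}x$ hides the conjugation structure you flag at the end: with $p=2k+1$ the integrand has $k+1$ factors of $\tfrac{1-e^{-ixN^{-2\varepsilon}}}{ix}$ and $k$ factors of its conjugate, so the support of $k_p$ is shifted relative to the pure-power case (to an interval like $[-k,k+1]$ rather than $[0,p]$); your argument is unaffected, but the precise interval for $\xi$ does straddle $N$ as you say. Second, the Paley--Wiener step needs $p\ge 2$ for $L^2$ membership, which you note; the paper's contour argument also needs $p\ge 2$ for the integral along the shifted line to converge, so the restriction is the same. Your additional paragraph inserting this into \cref{mainargument} and checking the $\delta=\varepsilon+$ balance is correct bookkeeping and matches the paper's subsequent Remark.
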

\begin{proof}
Define the function:
\begin{equation}
F(\lambda):=\int_{\RR} \left(\frac{1-e^{-ix}}{ix}\right)^p \, e^{ix\lambda} \, dx.
\end{equation}

The change of variables $x\mapsto -x$ shows that $F(\lambda)=F(1-\lambda)$. Suppose for now that $\lambda\notin [0,1]$, then the fact that $F$ is symmetric around $\frac{1}{2}$ implies we can assume that $\lambda<0$ without loss of generality.

Now consider the extension of the integrand to the complex plane: 
\begin{equation}
H(z):= \left(\frac{1-e^{-iz}}{iz}\right)^p \, e^{iz\lambda}.
\end{equation}
We can choose a rectangular shape that goes from $-R$ to $R$, then from $R$ to $R-i$, then $R-i$ to $-R-i$ and finally from $-R-i$ to $-R$. We will label each of the sides of this rectangle $L_1, \ldots, L_4$ respectively.
Then by the Cauchy integral theorem:
\[ F(\lambda)=\lim_{R\rightarrow\infty} \int_{L_1} H(z)\,dz= -\lim_{R\rightarrow\infty} \sum_{j=2}^4 \int_{L_j} H(z)\,dz,\]
assuming that these limits exist, as we will show soon. It is easy to check that as $R\rightarrow\infty$, the integrals over $L_2$ and $L_4$ vanish. As $R\rightarrow\infty$, the integral over $L_3$ will yield a term bounded by the following:
\[ e^{\lambda}\, \int_{-\infty}^{\infty} |ix+1|^{-p} \, dx=c_p e^{\lambda}.\]

Remember we are working under the assumption that $\lambda=\lambda(N,\xi)=N^{2\varepsilon}(N-\xi)<0$.
Suppose that we consider $\xi\in [a(N),b(N)]$. For a fixed $\xi$ in such an interval, unless $\xi=N+O(N^{-2\varepsilon})$, the limit $\lim_{N\rightarrow\infty} \lambda(N,\xi)$ will be $-\infty$, which will produce exponential decay.
 Consequently, the only $\xi$ that matter are those that satisfy $\xi=N+O(N^{-2\varepsilon})$. In particular, $a(N)$ and $b(N)$ admit such a representation. Since $a(N)\geq N$, the best we can do to maximize the interval is to take $a(N)=N$. Then by the above, $b(N)=N+cN^{-2\varepsilon}+ o(N^{-2\varepsilon})$ for some positive $c$.

Thus this interval is essentially comparable to $\mathcal{I}_N$ in \cref{optimalinterval}, and it is produced by taking only those $\lambda$ near zero. Using the symmetry of $F$ one can do the same for $\lambda$ near 1, and get a similar interval, which again does not improve the result.

Finally, we also assumed that $\lambda\notin [0,1]$ when we carried out this argument. That means that the only way in which we could potentially improve our interval is by including these numbers. However, these correspond to those $\xi$ satisfying 
\[0<N^{2\varepsilon}(N-\xi)<1 \quad \Leftrightarrow\quad N-N^{-2\varepsilon}<\xi<N.\]
Once again, these produce intervals for $\xi$ comparable to those we have already considered.
\end{proof}

\begin{rem} Note that the choice of $\mathcal{I}_N$ is not only optimal for $h_{p,N}$, as proved by \cref{optimalityofINp}, but it is also optimal for the asymptotic lower bound on the $H^s$-norm of $u_{p,N}$. Indeed, note that the Taylor approximations we have used in \cref{leadingorder} would be useless unless $\xi=N+o(N)$ (see for instance \cref{Nandxi}). Therefore, $\xi$ depends polynomially on $N$ in our approach. By carefully checking the proof of \cref{leadingorder}, we can see that all the lower order terms grow at most polynomially in $\xi$ and $N$, and thus polynomially in $N$. By \cref{optimalityofINp}, the function $h_{p,N}$ will decay exponentially outside of $\mathcal{I}_N$, and so even when multiplied by any of these polynomial terms in $N$ that appear in \cref{leadingorder}, it will not improve the asymptotic lower bound we have obtained there.
\end{rem}

\subsection{The case $p=3$}

At this stage we are ready to prove \cref{illtheorem} in the case $p=3$. Set $\mathcal{I}_N=[ N-N^{-2\delta},N]$ for $\delta:=\varepsilon+$.

By \cref{mainargument}, if the initial data-to-solution map were $C^3$ it would follow that:
\[1\gtrsim \norm{\langle\cdot\rangle^{s}\widehat{u_{3,N}}(T)}_{L^2_{\xi}(\mathcal{I}_N)},
\]
for any fixed $T$ (as small as necessary).

By \cref{leadingorder}, we may estimate this by the leading order and disregard the lower order terms:
\[1\gtrsim \norm{ N^s \, N^{3(\varepsilon-s)}\, h_{3,N}(\xi)\, e^{-iT|\xi|^{\sigma}}\,N^{\sigma-\alpha}\, \, T}_{L^2_{\xi}(\mathcal{I}_N)}.\]

Using \cref{hpN} and disregarding the lower order terms again, we obtain:
\begin{align*}
1 & \gtrsim N^{3(\varepsilon-s)} \, N^{-4\varepsilon} \, N^{s+\sigma-\alpha}\,T \, |\mathcal{I}_N|^{\frac{1}{2}}\\
& =N^{3(\varepsilon-s)} \, N^{-4\varepsilon} \, N^{s+\sigma-\alpha}\,T \, N^{-\delta} =T\, N^{-2s-\varepsilon+\sigma-\alpha-\delta}.
\end{align*}
This inequality is valid for any fixed $T$ in the lifespan of the solution and any $N>N_0(T)\gtrsim T^{-\frac{1}{\sigma}}$. If the exponent of $N$ were positive, we would reach a contradiction by taking $N\rightarrow\infty$. Therefore, we will have that the data-to-solution map is not $C^3$ whenever: 
\[ -2s-\varepsilon+\sigma-\alpha-\delta>0 \quad \Leftrightarrow\quad -2s+\sigma-\alpha>\varepsilon+\delta.\]
Remember that we have the condition $\varepsilon>\frac{\sigma-1}{2}$ from the hypothesis of \cref{u1prop}. Using this, together with $\delta=\varepsilon+$ we obtain:
\[ -2s+\sigma-\alpha>\sigma-1\quad\Leftrightarrow\quad \frac{1}{2}-\frac{\alpha}{2}=s_c>s,\]
as we wanted to prove.

\subsection{Main argument}

Unfortunately, the argument we used for the case $p=3$ does not provide a sharp result in the case  $p>3$. For that reason, we introduce some scaling symmetry as a way to add an extra parameter that allows us to exploit the time variable.

Consider the rescaled initial data:
\[ f_{\lambda,N}(x):=\lambda^{\frac{-\alpha\beta}{p-1}}\,f_N (\lambda^{-\beta}x),\]
The coefficient $\lambda^{\frac{-\alpha\beta}{p-1}}$ is given by rescaling the problem \cref{perturbed}. This initial data satisfies:
\begin{align}
 \norm{f_{\lambda,N}}_{L^2} & =\lambda^{-\frac{\alpha\beta}{p-1}+\frac{\beta}{2}}\, N^{-s},\nonumber \\
 \norm{f_{\lambda,N}}_{\dot{H}^s} & =\lambda^{-\frac{\alpha\beta}{p-1}+\beta(\frac{1}{2}-s)}.\label{scalednorm}
 \end{align}

We note that
\[ u_{1,\lambda}(t,x)=\lambda^{\frac{-\alpha\beta}{p-1}}\,u_1(\lambda^{-\alpha}t,\lambda^{-\beta}x)\]
solves the rescaled version of \cref{order1}.
This can in turn be used to find
\[ u_{p,\lambda}(t,x)= \lambda^{\frac{-\alpha\beta}{p-1}} u_p (\lambda^{-\alpha} t,\lambda^{-\beta}x),\]
which solves the rescaled version of \cref{orderp}.

We repeat the steps in \cref{mainargument} with these functions. If the initial-data-to-solution map were $C^p$ from $H^s(\RR)$ to $C([0,T],H^s(\RR))$, we would have:
\begin{equation}\label{scaledfirst}
\norm{f_{\lambda,N}}_{H^s}\gtrsim \norm{u_{p,\lambda,N}(T)}_{H^s}\gtrsim \norm{|\cdot|^s \widehat{u_{p,\lambda,N}}(T,\cdot)}_{L^2(\mathcal{I}_{N,\lambda})}.	
\end{equation}

We set $\mathcal{I}_{N,\lambda}=\lambda^{\beta} \mathcal{I}_N$ so that we can use the knowledge we developed about $u_{p,N}$ on this interval. Then
\begin{align}\label{scaledmainargument}
\norm{|\cdot|^s \widehat{u_{p,\lambda,N}}(T,\cdot)}_{L^2(\mathcal{I}_{N,\lambda})} & = \norm{|\cdot|^s \lambda^{-\frac{\alpha\beta}{p-1}+\beta}\widehat{u_{p,N}}(\lambda^{-\alpha}T,\lambda^{\beta}\cdot)}_{L^2(\mathcal{I}_{N,\lambda})}\\
& =\norm{|\cdot|^s \lambda^{-\frac{\alpha\beta}{p-1}+\frac{\beta}{2}-s\beta}\widehat{u_{p,N}}(\lambda^{-\alpha}T,\cdot)}_{L^2(\lambda^{-\beta}\mathcal{I}_{N,\lambda})}\nonumber \\
&=\lambda^{-\frac{\alpha\beta}{p-1}+\frac{\beta}{2}-s\beta}\,\norm{|\cdot|^s\widehat{u_{p,N}}(\lambda^{-\alpha}T,\cdot)}_{L^2(\mathcal{I}_{N})}.\nonumber
\end{align}

We also set $\lambda:=\lambda(N)=N^b$ for some power $b\in\RR$ to be defined later. However, there are some conditions on what this exponent could be. First of all, \cref{futurecond1} gave us a necessary condition for the interval $J_2$ to be nonempty that we now must rescale:
\[ \lambda^{-\alpha}T\gtrsim N^{-\sigma}, \quad \Leftrightarrow\quad T \gtrsim N^{b\alpha-\sigma}.\]
Therefore in order for the estimates that we previously developed to be valid for any small $T$, we require:
\begin{equation}\label{bcond1}
\sigma>b\alpha.
\end{equation}

Similarly, the approximations we developed for $u_1,g(u_1),\ldots$ depended on working with times that satisfied that $O(t N^{-2\varepsilon-1+\sigma})$ was a term of lower order than $O(1)$. See for instance \cref{futurecond2}. This should hold in the whole interval $J_2$, and in particular at the end of that interval $J_2$ after rescaling:
\[ \lambda^{-\alpha}T\,N^{-2\varepsilon-1+\sigma}=o(1),\quad\Leftrightarrow\quad N^{-\alpha b-2\varepsilon-1+\sigma}=o(1).\]
In other words,
\begin{equation}\label{bcond2}
\sigma-2\varepsilon-1<\alpha b.
\end{equation}
Note that this condition replaces the requirement for $\varepsilon$ to be greater than $\frac{\sigma-1}{2}$ that we had in \cref{u1prop} - in fact we expect that it will give us greater flexibility with the range of $\varepsilon$.

\begin{rem} One might wonder whether \cref{bcond1,bcond2} are the only conditions limiting the range of $b$. To find all such conditions one must go back to the approximations we developed for $u_1,g(u_1),\ldots$ in \cref{leadingorder}. However, note that most errors in such approximations depend on the product of powers of $t^{\beta}$ and $N^{\alpha}$, which will be small thanks to \cref{bcond1}. As an example, note that in the asymptotic expansion of $E_{\beta}$ in \cref{Lasymp} there is a term in $t^{-\beta}|\xi|^{-\alpha}$ which, after integration over $J_2$, produced terms similar to $T^{1-\beta} N^{-\alpha}$. When $T$ was fixed, such terms were of lower order in $N$ than the leading term. After scaling, $T$ becomes $\lambda^{-\alpha}\,T=N^{-\alpha b}\,T$ so these terms become $T^{1-\beta}\, N^{-b\alpha(1-\beta)-\alpha}$. Instead, the leading term in \cref{Lasymp} only contributes with $N^{-b\alpha}\,T$. Therefore, for the leading behavior to stay the same, we would need to add the condition:
\[-b\alpha(1-\beta)-\alpha<-b\alpha \quad\Leftrightarrow\quad b\beta<1,\]
which, as expected, agrees with \cref{bcond1}. 
\end{rem}

\begin{rem} In the previous computations, we are assuming that some $H^s$ norms are controlled by $\dot{H}^s$ norms, which is true for the functions we are considering regardless of $s$. For instance, in the case of $f_{\lambda_N}$, we have the following:
\begin{align*}
 \lambda^{\frac{2\alpha\beta}{p-1}}\,\norm{f_{\lambda,N}}_{H^s}^2 & \sim \lambda^{(1-2s)\beta} \, \norm{f_N}_{\dot{H}^s}+\lambda^{(1-2s)\beta} \int_{|\xi|<\lambda^{\beta}} |\widehat{f_N}(\xi)|^2 \,(\lambda^{2s\beta}-|\xi|^{2s})\,d\xi\\
 & = \lambda^{(1-2s)\beta}+ \lambda^{(1-2s)\beta} \int_{|\xi|<\lambda^{\beta}} |\widehat{f_N}(\xi)|^2 \, (\lambda^{2s\beta}-|\xi|^{2s})\,d\xi.
 \end{align*}
 Note that for $N$ large enough
 \[\int_{|\xi|<\lambda^{\beta}} |\widehat{f_N}(\xi)|^2 \, (\lambda^{2s\beta}-|\xi|^{2s})\,d\xi=0,\]
thanks to the fact that the set over which the integration happens will be empty for large enough $N$, given $\xi\in\mathcal{I}_N$, $\lambda=N^b$ and \cref{bcond1}.
The same happens in the bound for $u_{p,\lambda,N}(T)$ used in \cref{scaledfirst}, since we are integrating over $\mathcal{I}_N$.

\end{rem}

We are now ready to finish the proof of \cref{illtheorem}. Suppose that the initial data-to-solution map were $C^p$. Then \cref{scalednorm,scaledfirst,scaledmainargument} would yield:
\[ \lambda^{-\frac{\alpha\beta}{p-1}+\beta(\frac{1}{2}-s)}=\norm{f_{\lambda,N}}_{\dot{H}^s}\gtrsim \lambda^{-\frac{\alpha\beta}{p-1}+\frac{\beta}{2}-s\beta}\,\norm{|\cdot|^s\widehat{u_{p,N}}(\lambda^{-\alpha}T,\cdot)}_{L^2(\mathcal{I}_N)}.\]
As before, we set $\mathcal{I}_N:=[N-N^{-2\delta},N]$ for $\delta=\varepsilon+$. Then the rescaled version of Proposition \cref{leadingorder} gives:
\[1\gtrsim  \norm{N^{p(\varepsilon-s)} \,h_{p,N}(\xi) \, N^{s+\sigma-\alpha} \lambda^{-\alpha}T}_{L^2_{\xi}(\mathcal{I}_N)},\]
after disregarding lower order terms. We now use \cref{hpN}, and disregard the lower order terms again to get a lower bound:
\[1\gtrsim \norm{N^{p(\varepsilon-s)} \, N^{2\varepsilon (1-p)} \, N^{s+\sigma-\alpha}\lambda^{-\alpha}T}_{L^2_{\xi}(\mathcal{I}_N)}.\]
After integration,
\[1\gtrsim N^{p(\varepsilon-s)} \, N^{2\varepsilon (1-p)} \, N^{s+\sigma-\alpha}\, \lambda^{-\alpha}\, T \, |\mathcal{I}_N|^{\frac{1}{2}}.\]
Setting $\lambda:=N^b$ yields:
\[1\gtrsim T \, N^{-\alpha b+p(\varepsilon-s)+2\varepsilon(1-p)+s+\sigma-\alpha-\delta},\]
for any small $T$ and all $N\geq N_0(T)\gtrsim T^{0-}$.
If the exponent of $N$ were positive we would reach a contradiction after taking $N\rightarrow\infty$. Therefore, the data-to-solution map will not be $C^p$ whenever the following condition holds:
\[ -\alpha b +  p(\varepsilon-s)+2\varepsilon(1-p)+s+\sigma-\alpha-\delta>0.\]
After some computations, and using that $\delta=\varepsilon+$ we obtain:
\begin{equation}\label{linprog}
 -\alpha b + (1-p)\varepsilon+\sigma-\alpha>(p-1)s.
\end{equation}
At this stage, we must choose $\varepsilon$ and $b$ that maximize the upper-bound subject to conditions \cref{bcond1,bcond2}. When $p\geq 3$, the optimal choice is to take 
\[b:=\frac{1}{\beta}-, \quad \mbox{and} \quad \varepsilon:=\frac{\sigma-1-\alpha b}{2}+ ,\]
which yields the range
$ s_c >s$.

When $p=2$, the quantity $-\alpha b + (1-p)\varepsilon+\sigma-\alpha$ in \cref{linprog} can be made as large as we want under conditions \cref{bcond1,bcond2}, which shows that the initial data-to-solution map from $H^s(\RR)$ to $C_t([0,T],H^s_x(\RR))$ will not be $C^2$ for any regularity $s$, no matter how large.

\appendix

\section{On the representations of the solutions}
\label{sec: appA}

Consider the linear space-time fractional Schr\"odinger equation \cref{linear} with $g=0$.
By taking the Fourier transform in space and the Laplace transform in time, we obtain
\[i^{\beta} s^{\beta} \widehat{u}(s,\xi) - i^{\beta} s^{\beta -1}  \widehat{f}(\xi) = |\xi |^{\alpha} \widehat{u}(s,\xi),\]
and thus
\[\widehat{u}(s,\xi)= \frac{i^{\beta} s^{\beta -1}}{i^{\beta} s^{\beta} - |\xi |^{\alpha}} \, \widehat{f}(\xi),\]
which we then invert to find:
\[u(t,x)=\int_{\RR} e^{ix\cdot \xi} \, \widehat{f}(\xi) \, \left[ \sum_{k=0}^{\infty} \frac{ t^{\beta\, k} |\xi|^{\alpha\, k} i^{- \beta\, k}}{\Gamma(\beta\, k +1)}\right] \, d\xi , \]
which was already given in \cref{Lsolution}.

 For completeness, we now present the main ideas on how this function formally solves equation \cref{linear} in the case $g=0$. The full details can be found in \cite{kai}.

With $u$ as in \cref{Lsolution}, we formally have 
\[\partial_t u(t,x)= \int_{\RR} e^{ix\cdot \xi} \, \widehat{f}(\xi) \, \left[ \sum_{k=1}^{\infty} \frac{ \beta k  t^{\beta\, k -1} |\xi|^{\alpha\, k} i^{- \beta\, k}}{\Gamma(\beta\, k +1)}\right] \, d\xi ,\]
and therefore,
\[\partial_t^{\beta} u (t,x)= \frac{1}{\Gamma (1-\beta )} \int_{\RR} e^{ix\cdot \xi} \, \widehat{f}(\xi) \, \left[ \sum_{k=1}^{\infty} \frac{ \beta k  |\xi|^{\alpha\, k} i^{- \beta\, k}}{\Gamma(\beta\, k +1)}\right] \, \int_0^t \frac{{\tau}^{\beta\, k -1}}{(t-\tau)^{\beta}} \, d\tau \, d\xi .\]

One can easily check that the $\tau$-integral is essentially a Beta function:
\[\int_0^t \frac{{\tau}^{\beta\, k -1}}{(t-\tau)^{\beta}} \, d\tau= \int_0^1 \frac{t^{\beta\, k -1}\, {\tau}^{\beta\, k -1}}{t^{\beta} \, (1-\tau)^{\beta}} \, t \,d\tau
= t^{\beta (k-1)} \, \frac{\Gamma (\beta k)\, \Gamma (1-\beta) }{\Gamma (\beta k - \beta +1 )} .\]

Consequently, and after using the identity $\Gamma (z+1)=z\,\Gamma (z)$,
\begin{align*}
\partial_t^{\beta} u (t,x) & = \frac{1}{\Gamma (1-\beta )} \int_{\RR} e^{ix\cdot \xi} \, \widehat{f}(\xi) \, \left[ \sum_{k=1}^{\infty} \frac{ \beta k t^{\beta (k-1)}  |\xi|^{\alpha\, k} i^{- \beta\, k} \Gamma (\beta k)\, \Gamma (1-\beta) }{\Gamma(\beta\, k +1) \Gamma (\beta k - \beta +1)}\right] \,  \, d\xi \\
& =\int_{\RR} e^{ix\cdot \xi} \, \widehat{f}(\xi) \, \left[ \sum_{k=1}^{\infty} \frac{ t^{\beta (k-1)}  |\xi|^{\alpha\, k} i^{- \beta\, k} \Gamma (\beta k+1)\,}{\Gamma(\beta\, k +1) \Gamma (\beta k - \beta +1)}\right] \,  \, d\xi \\
& =\int_{\RR} e^{ix\cdot \xi} \, \widehat{f}(\xi) \, \left[ \sum_{k=0}^{\infty} \frac{ t^{\beta k}  |\xi|^{\alpha\, (k+1)} i^{- \beta\, (k+1)}\,}{\Gamma (\beta k +1)}\right] \,  \, d\xi=I.
\end{align*}

On the other hand, we consider the other term involved in equation \cref{linear}, which formally gives
\[(-\Delta_x)^{\alpha /2}\, u=\int_{\RR} e^{ix\cdot \xi} \, \widehat{f}(\xi) \, \left[ \sum_{k=0}^{\infty} \frac{ t^{\beta\, k} |\xi|^{\alpha\, (k+1)} i^{- \beta\, k}}{\Gamma(\beta\, k +1)}\right] \, d\xi=I\! I .\]
Therefore $I$ and $I\! I$ coincide after multiplying the former by the $i^{\beta}$ factor.

Now consider the nonlinear space-time fractional Schr\"odinger equation, as defined in \cref{linear}. Once again, we simply provide a brief exposition, since the full details may be found in  \cite{kai}. A representation for the solution to the nonlinear equation was given in \cref{inteq}, where the Fourier transform of the inhomogeneous part is precisely
\begin{align}\label{h}
h(t,\xi) & =i^{-\beta} \int_0^{t} \widehat{g}(\tau,\xi) \, (t-\tau )^{\beta -1} E_{\beta , \beta }(i^{-\beta} (t-\tau )^{\beta} |\xi |^{\alpha})\, d\tau \\
& =\sum_{k=0}^{\infty}  i^{-\beta (k+1)} |\xi |^{\alpha k}\, (J_{\beta k + \beta }\widehat{g})(t,\xi) ,
\end{align}
where 
\[(J_{\nu} \widehat{g})(t,\xi):=\frac{1}{\Gamma (\nu )} \, \int_0^t \, \widehat{g} (\tau,\xi) (t-\tau )^{\nu -1}\, d\tau \ ,\]
which is based on the definition of $E_{\beta, \beta}$ in \cref{genML}.
One can check that $J_{\nu_1} J_{\nu_2} =J_{\nu_1 + \nu_2}$ for any $\nu_1 ,\nu_2 >0$.

All we need to do now is to show that $h$ is a particular solution of the following Cauchy problem:
\begin{equation}\label{heq}
\left\{ \begin{array}{ll}
i^{\beta} \partial_t^{\beta} \widehat{u} & = |\xi |^{\alpha}\, \widehat{u} + \widehat{g} ,\\
\widehat{u}\mid_{t=0} & = 0 .
\end{array}\right.
\end{equation}

We have the following formal equalities:
\begin{align*}
I & = |\xi |^{\alpha} h(t,\xi )=\sum_{k=0}^{\infty} i^{-\beta (k+1)} |\xi |^{\alpha (k+1)} \, J_{\beta k + \beta } \widehat{g}(t) \\
I\! I & = i^{\beta} \partial_t^{\beta} h(t,\xi )= \sum_{k=0}^{\infty} i^{-\beta k} |\xi |^{\alpha k} \partial_t^{\beta}J_{\beta k + \beta } \widehat{g}(t) .
\end{align*}
Note that $\partial_t^{\beta}=D_t^{\beta}\left(\mbox{id} - \mbox{ev}_0\right)$, where $\mbox{id}$ is the identity, $\mbox{ev}_0f=f(0)$ and 
\[D_t^{\beta}f:=\frac{1}{\Gamma (1-\beta )} \frac{d}{dt} \int_0^t f(s)(t-s)^{-\beta} \, ds=\frac{d}{dt} J_{1-\beta}f .\]
The proof of this fact follows directly from integration by parts. Then one shows the following identity
\[\partial_t^{\beta}J_{\beta}=D_t^{\beta} \left(J_{\beta} - \mbox{ev}_0\,J_{\beta}\right)=D_t^{\beta}J_{\beta}=\frac{d}{dt} J_{1-\beta} J_{\beta}=\frac{d}{dt}J_1 =\mbox{id} ,\]
by the fundamental theorem of calculus, having also used the fact that $\mbox{ev}_0\,J_{\beta}=0$.

With all this in mind, let us show that $h$ in \cref{h} formally satisfies the equation in \cref{heq}:
\begin{align*}
I\! I-I & =\sum_{k=0}^{\infty} i^{-\beta k} |\xi |^{\alpha k} \partial_t^{\beta}J_{\beta} J_{\beta k} \widehat{g} - \sum_{k=0}^{\infty} i^{-\beta (k+1)} |\xi |^{\alpha (k+1)} J_{\beta k +\beta}\widehat{g} \\
& =\sum_{k=0}^{\infty} i^{-\beta k} |\xi |^{\alpha k} J_{\beta k} \widehat{g} - \sum_{k=0}^{\infty} i^{-\beta (k+1)} |\xi |^{\alpha (k+1)} J_{\beta k +\beta}\widehat{g}=J_0 \widehat{g}=\widehat{g} .
\end{align*}
It is also obvious that $h\!\mid_{t=0}=0$.

\section{General proof of \cref{maintheorem}}
\label{sec: appB}

In this section, we provide the proof of \cref{maintheorem} for general values of the parameters involved.
As we did before, we define the following operator based on the integral equation given in \cref{inteq},
\begin{align}\label{fracduhamel2}
\Phi (v)(t,x)  = & \int_{\RR} e^{ix\cdot \xi} \, \widehat{f}(\xi) \, E_{\beta}(|\xi |^{\alpha} t^{\beta} i^{-\beta}) \, d\xi \\
 & + i^{-\beta} \, \int_0^{t} \int_{\RR} \widehat{g}(\tau , \xi) \, (t-\tau )^{\beta -1} E_{\beta , \beta }(i^{-\beta} (t-\tau )^{\beta} |\xi |^{\alpha}) \, e^{ix\cdot \xi}\, d\xi d\tau ,\nonumber
\end{align}
where 
\[\widehat{g}(\tau , \xi)=\int_{\RR} |v(\tau,x)|^{p-1} v(\tau,x) e^{-ix\cdot \xi} \, dx .\]

Now we define 
\begin{align*}
\eta_1 (v) & = \norm{\langle \nabla \rangle^{\delta} v }_{L^{\infty}_x L^2_T} ,\\
\eta_2 (v) & = \norm{\langle \nabla \rangle^s v }_{L^{\infty}_T L^2_x} ,\\
\eta_3 (v) & = \norm{ v }_{L^{2(p-1)}_x L^{\infty}_T} ,
\end{align*}
for some $s$ and some $\delta$ to be chosen later, and let $\Lambda_T:= \max_{j=1,2,3}\, \eta_j$. 
Then consider the space $X_T:=\{v\in C\left([0,T],H^s(\RR )\right) \mid \Lambda_T (v)<\infty \}$. Our goal is to show that for small enough $T$, there exists a ball $B_R \subset X_T$ such that $\Phi : B_R \longrightarrow B_R$ is a contraction, and then apply the contraction mapping principle.

Let us start by taking the first norm of \cref{fracduhamel2}, where we will use \cref{thsmooth} and \cref{series}.
\begin{align}\label{geta1}
\eta_1 (\Phi (v)) \lesssim & \,(1+ T^{\frac{1}{2}})\, (1+ T^{\frac{\gamma - \gamma '}{\sigma}})\, \norm{\langle \nabla \rangle^{\delta-\gamma'} f}_{L^2_x} \\
& + (1+T^{\frac{\tilde{\gamma} - \tilde{\gamma}'}{\sigma}} +T^{\frac{1}{2}})\, \norm{\langle \nabla \rangle^{\delta-\tilde{\gamma}'} (|v|^{p-1} v)}_{L^1_T L^2_x} .\nonumber
\end{align}

Now we look at the second norm of \cref{fracduhamel}. \Cref{L21,L22,L23} combined with \cref{series} yield
\begin{equation}\label{geta2}
\eta_2 (\Phi (v))\lesssim \norm{\langle \nabla \rangle^s f}_{L^2_x} + T^{\beta - \frac{1}{2}}\, (1+ T^{1-\beta}) \, \norm{ \langle\nabla\rangle^{s+\sigma -\alpha} (|v|^{p-1} v)}_{L^2_{T,x}} .
\end{equation}
In order to be able to eventually control the nonlinear part with $\eta_1$, we require $s+\sigma-\alpha\leq \delta$, as well as $s\geq \delta - \gamma'$ to control the linear part too. These two conditions on $s,\delta$ are perfectly compatible, and one may easily check that are equivalent to $\alpha>\frac{\sigma+1}{2}$ as stated in the hypothesis of \cref{maintheorem}, see \cref{parameterconditions}.

Finally, we take the third norm, which we control thanks to \cref{maxth} and \cref{series}.
\begin{equation}\label{geta3}
\eta_3 (\Phi (v))\lesssim \norm{\langle \nabla \rangle^{s} f}_{L^2_x}+ T^{\beta - \frac{1}{2}}\, \norm{ \langle\nabla\rangle^{s+\sigma -\alpha} (|v|^{p-1} v)}_{L^2_{T,x}} ,
\end{equation}
for $s\geq\frac{1}{2}-\frac{1}{2(p-1)}$ and $p\geq 3$.

Therefore, we can simultaneously estimate \cref{geta1,geta2,geta3} by controlling the quantity $\norm{ \langle\nabla\rangle^{s+\sigma -\alpha} (|v|^{p-1} v)}_{L^2_{T,x}}$ in terms of $\eta_1 , \eta_2$ and $\eta_3$. Using \cref{AvoidChainRule}, we have
\begin{equation}\label{usefulforcont}
\norm{ \langle\nabla\rangle^{s+\sigma -\alpha} (|v|^{p-1} v)}_{L^2_{T,x}}\lesssim \eta_1 (v) \, \eta_3 (v)^{p-1} \, .
\end{equation}

By assuming that $T\leq 1$, \cref{geta1,geta2,geta3} can be rewritten as
\[\Lambda_T (\Phi (v))\lesssim \norm{f}_{H^s_x}+ T^{\beta - \frac{1}{2}} \, \eta_1 (v) \, \eta_3 (v)^{p-1}\lesssim \norm{f}_{H^s_x}+ T^{\beta - \frac{1}{2}} \, \Lambda (v)^p . \]

Pick $B_R = \{ v\in X_T \mid \Lambda_T (v) < R \}$ and $f$ such that $\norm{f}_{H^s_x}\lesssim \frac{R}{2}$. Then we have that $\Phi : B_R \longrightarrow B_R$ as long as $C\, T^{\beta - \frac{1}{2}}\, R^p < \frac{R}{2}$, which happens for $T$ small enough. We still must prove that $\Phi (v)$ is actually in $C\left([0,T],H^s(\RR )\right)$, but we do that in \cref{cont} below.

Now we prove that $\Phi$ is a contraction. By using the same ideas as in \cref{geta1,geta2,geta3} together with $T\leq 1$ we quickly find
\[\Lambda_T (\Phi (v)- \Phi (u))\lesssim T^{\beta - \frac{1}{2}}\, \norm{ \langle\nabla\rangle^{s+\sigma -\alpha} (|v|^{p-1} v - |u|^{p-1} u)}_{L^2_{T,x}}.\]
Now we adapt an idea found in the proof of Theorem 1.2 in \cite{HS}, which is based on the fundamental theorem of calculus:
\begin{align*}
\norm{|v|^{p-1} v - |u|^{p-1} u}_{L^2_T H^{s+\sigma-\alpha}_x} & \leq \norm{\int_0^1 p |v + \lambda (u-v) |^{p-1} (u-v) \, d\lambda }_{L^2_T H^{s+\sigma-\alpha}_x} \\
& \leq p \int_0^1 \norm{ |v + \lambda (u-v) |^{p-1} (u-v)}_{L^2_T H^{s+\sigma-\alpha}_x} \, d\lambda .
\end{align*}
Suppose $p=2k+1$. By \cref{AvoidChainRule} we have
\begin{align*}
\left\lVert \right.&\left.\langle\nabla\rangle^{s+\sigma -\alpha} \left(| v + \lambda (u-v) |^{p-1} (u-v)\right)\right\rVert_{L^2_{T,x}} \\
   =& \norm{ \langle\nabla\rangle^{s+\sigma -\alpha} \left([v + \lambda (u-v) ]^{k} [\bar{v} + \lambda (\bar{u}-\bar{v}) ]^{k} (u-v)\right)}_{L^2_{T,x}}\\
 \lesssim & \norm{ \langle\nabla\rangle^{s+\sigma -\alpha} \left(v+\lambda (u-v) \right) [v + \lambda (u-v) ]^{k-1} [\bar{v} + \lambda (\bar{u}-\bar{v}) ]^{k} (u-v)}_{L^2_{T,x}} \\
 & +\norm{ \langle\nabla\rangle^{s+\sigma -\alpha} \left(\bar{v} + \lambda (\bar{u}-\bar{v}) \right) [v + \lambda (u-v) ]^{k} [\bar{v} + \lambda (\bar{u}-\bar{v}) ]^{k-1} (u-v)}_{L^2_{T,x}} \\
& +\norm{ \langle\nabla\rangle^{s+\sigma -\alpha} (u-v) [v + \lambda (u-v) ]^{k} [\bar{v} + \lambda (\bar{u}-\bar{v}) ]^{k} }_{L^2_{T,x}} = A+ B +C .
\end{align*}
A bound for $C$ follows from the H\"older inequality.
\begin{align*}
C & \leq \norm{ \langle\nabla\rangle^{s+\sigma -\alpha} (u-v)}_{L^{\infty}_x L^2_T} \norm{[v + \lambda (u-v) ]^{k} [\bar{v} + \lambda (\bar{u}-\bar{v}) ]^{k}}_{L^2_x L^{\infty}_T}\\
& =\eta_1 (u-v) \norm{|v + \lambda (u-v)|^{p-1}}_{L^2_x L^{\infty}_T}= \eta_1 (u-v)\norm{v + \lambda (u-v)}_{L^{2(p-1)}_x L^{\infty}_T}^{p-1}\\
& =\eta_1 (u-v)  \, \eta_3 (v + \lambda (u-v))^{p-1} \leq \eta_1 (u-v) \, [\eta_3 (v)^{p-1} + \eta_3 (u)^{p-1} ]\\
&\leq 2 R^{p-1}\,  \eta_1 (u-v) .
\end{align*}

Now let's deal with $A$, which also follows from the H\"older inequality.
\begin{multline*}
A  =\norm{ \langle\nabla\rangle^{s+\sigma -\alpha} \left(v+\lambda (u-v) \right) [v + \lambda (u-v) ]^{k-1} [\bar{v} + \lambda (\bar{u}-\bar{v}) ]^{k} (u-v)}_{L^2_{T,x}} \\
  \leq \norm{u-v}_{L^{2(p-1)}_x L^{\infty}_T} \norm{ \langle\nabla\rangle^{s+\sigma -\alpha} \left(v+\lambda (u-v) \right)}_{L^{\infty}_x L^2_T} \\
 \norm{[v + \lambda (u-v) ]^{k-1} [\bar{v} + \lambda (\bar{u}-\bar{v}) ]^{k}}_{L^r_x L^{\infty}_T} ,
\end{multline*}
where $\frac{1}{2}=\frac{1}{2(p-1)} + \frac{1}{r}$.  $B$ is done in the same way.

Then one uses the pointwise bound $[v + \lambda (u-v) ]^{k-1}\lesssim |v|^{k-1} + |u|^{k-1}$, 
\begin{align*}
A & \leq \eta_3 (u-v)\,  \norm{ \lambda\, \langle\nabla\rangle^{s+\sigma -\alpha} u + (1-\lambda)\, \langle\nabla\rangle^{s+\sigma -\alpha} v }_{L^{\infty}_x L^2_T}\norm{|v|^{2k-1} + |u|^{2k-1}}_{L^r_x L^{\infty}_T} \\
 & \leq \eta_3 (u-v)\, (\eta_1 (u) + \eta_1 (v)) \, ( \eta_3 (u)^{p-2} + \eta_3 (v)^{p-2})\leq 4R^{p-1}\eta_3 (u-v), 
\end{align*}
after checking that $(2k-1)r=2(p-1)$.

Putting everything together, we obtain
\[\Lambda_T (\Phi (v)- \Phi (u))\lesssim T^{\beta - \frac{1}{2}}\,R^{p-1} \,\Lambda_T (v-u),\]
so that by making $T$ smaller (if necessary), we can arrange $T^{\beta - \frac{1}{2}}\,R^{p-1}<1$. Therefore $\Phi$ is a contraction on the ball $B_R = \{ v \mid \Lambda_T (v) < R\sim \norm{f}_{H^s_x} \}$ with $s=\frac{1}{2}-\frac{1}{2(p-1)}$.

Finally, we give the proof that $\Phi (v)$ is continuous for completeness.

\begin{lemma}\label{cont} $\Phi (v)\in C\left([0,T],H^s(\RR )\right)$ whenever $v\in X_T$.
\end{lemma}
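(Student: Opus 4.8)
The plan is to split $\Phi(v)=\Phi_{\mathrm{lin}}(f)+\Phi_{\mathrm{nl}}(g)$, where $\Phi_{\mathrm{lin}}(f)$ is the first term of \cref{fracduhamel2} and $\Phi_{\mathrm{nl}}(g)$ is the Duhamel term with $g:=|v|^{p-1}v$, and to prove that each defines a continuous map from $[0,T]$ into $H^s(\RR)$.

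For the linear part I would use Plancherel to write
\[
\|\langle\nabla\rangle^s\big(\Phi_{\mathrm{lin}}(f)(t)-\Phi_{\mathrm{lin}}(f)(t')\big)\|_{L^2_x}^2=\int_{\RR}\langle\xi\rangle^{2s}|\widehat f(\xi)|^2\,\big|E_\beta(i^{-\beta}t^\beta|\xi|^\alpha)-E_\beta(i^{-\beta}(t')^\beta|\xi|^\alpha)\big|^2\,d\xi ,
\]
and then invoke dominated convergence. The point is that $E_\beta$ is entire, and on the ray $\arg z=-\beta\tfrac\pi2$ — where $(i^{-\beta}r)^{1/\beta}=-ir^{1/\beta}$, so the exponential in \cref{Lasymp} has unit modulus — the quantity $C_\beta:=\sup_{r\ge0}|E_\beta(i^{-\beta}r)|$ is finite; hence the integrand above converges to $0$ pointwise as $t'\to t$ while staying bounded by the fixed $L^1$ function $4C_\beta^2\langle\xi\rangle^{2s}|\widehat f(\xi)|^2$. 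Continuity at $t=0$ is the same, using $E_\beta(0)=1$.

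For the nonlinear part I would first record that $\langle\nabla\rangle^{s+\sigma-\alpha}g\in L^2_{T,x}$, by \cref{AvoidChainRule} and $v\in X_T$ (cf. \cref{usefulforcont}), and that the estimates behind \cref{geta2} — namely \Cref{L21,L22,L23} together with the decomposition of \cref{series} — make $g\mapsto\Phi_{\mathrm{nl}}(g)$ a bounded map into $L^\infty_TH^s_x$. Since every operator in that decomposition is a time-convolution against a kernel that is integrable near the diagonal, replacing $g$ by $\chi_{[a,b]}g$ in those estimates gives
\[
\|\Phi_{\mathrm{nl}}(\chi_{[a,b]}g)\|_{L^\infty_TH^s_x}\lesssim |b-a|^{\kappa}\,\|\langle\nabla\rangle^{s+\sigma-\alpha}g\|_{L^2([a,b]\times\RR)}
\]
for some $\kappa>0$ (here $\beta>\tfrac12$ is used). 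This already yields continuity at $t=0$, since $\|\Phi_{\mathrm{nl}}(g)(t)\|_{H^s_x}\to0$. For $t,t'\in(0,T]$ the substitution $\tau\mapsto t-\tau$ rewrites
\[
\widehat{\Phi_{\mathrm{nl}}(g)(t)}(\xi)=i^{-\beta}\int_0^t\widehat g(t-u,\xi)\,u^{\beta-1}E_{\beta,\beta}(i^{-\beta}u^\beta|\xi|^\alpha)\,du,
\]
so that the singular, oscillatory kernel $u^{\beta-1}E_{\beta,\beta}(i^{-\beta}u^\beta|\xi|^\alpha)$ no longer depends on $t$; using $|E_{\beta,\beta}(i^{-\beta}r)|\lesssim 1+r^{(1-\beta)/\beta}$ from \cref{NLasymp} (again the exponential has unit modulus on this ray) one checks that it is dominated by $u^{\beta-1}+|\xi|^{\sigma-\alpha}$, locally integrable in $u$. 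For $g$ smooth, compactly supported in $t\in(0,T)$ and Schwartz in $x$, continuity of $t\mapsto\Phi_{\mathrm{nl}}(g)(t)\in H^s_x$ then follows from two applications of dominated convergence (in $u$ at fixed $\xi$, then in $\xi$). Finally I would close by density: choose $H_n\to\langle\nabla\rangle^{s+\sigma-\alpha}g$ in $L^2_{T,x}$ with $H_n$ smooth, compactly supported in $t\in(0,T)$ and Schwartz in $x$, set $g_n:=\langle\nabla\rangle^{-(s+\sigma-\alpha)}H_n$ (which is again in that class), note $\Phi_{\mathrm{nl}}(g_n)\to\Phi_{\mathrm{nl}}(g)$ uniformly in $L^\infty_TH^s_x$ by the boundedness estimate, and conclude that the uniform limit $\Phi_{\mathrm{nl}}(g)$ is continuous because each $\Phi_{\mathrm{nl}}(g_n)$ is. Combining the two parts gives $\Phi(v)\in C([0,T],H^s(\RR))$.

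The main obstacle is that in the Duhamel term the time variable $t$ appears simultaneously in the integrable-but-singular weight $(t-\tau)^{\beta-1}$, in the Mittag-Leffler multiplier, and in the upper limit of integration, which blocks a direct dominated-convergence argument; the substitution $u=t-\tau$ moves all the $t$-dependence out of the kernel, and the density reduction then takes care of the fact that $g=|v|^{p-1}v$ is only controlled in an $L^2$-type space in time.
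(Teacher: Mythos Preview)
Your argument is correct. The linear part is handled exactly as in the paper (Plancherel plus dominated convergence, using the uniform bound on $E_\beta$ along the ray $\arg z=-\beta\pi/2$), and for the nonlinear part you share with the paper the key substitution $u=t-\tau$ that freezes the singular kernel $u^{\beta-1}E_{\beta,\beta}(i^{-\beta}u^\beta|\xi|^\alpha)$. The difference is in the closing step: the paper works directly with the given $g=|v|^{p-1}v$, splitting $\Phi_{\mathrm{nl}}(g)(t_1)-\Phi_{\mathrm{nl}}(g)(t_2)$ into an integral over the short segment $[t_2,t_1]$ (controlled via Cauchy--Schwarz by $|t_1-t_2|^{1/2}\|\langle\nabla\rangle^{s+\sigma-\alpha}g\|_{L^2_{T,x}}$) plus an integral over $[0,t_2]$ involving the translate difference $\widehat g(t_1-\tau,\xi)-\widehat g(t_2-\tau,\xi)$, which vanishes by translation continuity of $L^2$ and dominated convergence in $\xi$. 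You instead take a more structural route: first establish continuity for smooth, compactly-supported-in-time, Schwartz-in-space inputs by a bare dominated-convergence argument, and then pass to the actual $g$ by density in $L^2_{T,x}$, using the operator bound behind \cref{geta2} to upgrade $L^2_{T,x}$-convergence of inputs to uniform $L^\infty_TH^s_x$-convergence of outputs. Your approach buys a clean separation between the analytic content (smooth case) and the approximation, and would transfer verbatim to any Duhamel-type operator satisfying an $L^2_{T,x}\to L^\infty_TH^s_x$ bound; the paper's approach is more self-contained, avoids the density reduction, and makes the modulus of continuity (the $|t_1-t_2|^{1/2}$ gain on the short segment) explicit.
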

\begin{proof}
Let $\mbox{lin}_t v$ be the linear part of $\Phi (v)$. It is then easy to show that $\mbox{lin}_t v\in C\left([0,T],H^s(\RR )\right)$. Firstly, we use the Plancherel theorem to write
\[\norm{\mbox{lin}_t v}_{H^s_x}=\norm{|\widehat{f} |^2 \, \langle \xi\rangle^{2s}\, |E_{\beta}(i^{-\beta}t^{\beta}|\xi |^{\alpha})|^2}_{L^1_{\xi}}^{\frac{1}{2}} \ .\]
Then we use the Dominated convergence theorem, together with the fact that 
\[|\widehat{f} |^2 \, \langle \xi\rangle^{2s}\, |E_{\beta}(i^{-\beta}t^{\beta}|\xi |^{\alpha})|^2\lesssim_M  |\widehat{f} |^2 \, \langle \xi\rangle^{2s}\in L^1_{\xi}\]
 uniformly in $t$, and also the fact that $E_{\beta}(i^{-\beta}t^{\beta}|\xi |^{\alpha})$ is continuous in $t$.

Therefore, we only need to prove that $\mbox{non}_t v$, the nonlinear part of $\Phi (v)$, also lives in $C\left([0,T],H^s(\RR )\right)$. Suppose $0\leq t_2 <t_1\leq T$, and consider
\begin{align*}
\left\lVert \mbox{non}_{t_1} v\right. &-\left.\mbox{non}_{t_2} v\right\rVert^2_{H^s_x} \\
 = & \int_{\RR} \Big | \int_0^{t_1} \widehat{g}(t_1 - \tau ,\xi) \, \tau^{\beta-1} \, E_{\beta, \beta}(i^{-\beta}\tau^{\beta}|\xi |^{\alpha}) \, d\tau \\
& - \int_0^{t_2} \widehat{g}(t_2 - \tau ,\xi) \, \tau^{\beta-1} \, E_{\beta, \beta}(i^{-\beta}\tau^{\beta}|\xi |^{\alpha}) \, d\tau \Big |^2 \langle\xi\rangle^{2s}\,d\xi \\
 = & \int_{\RR} \Big | \int_{t_2}^{t_1} \widehat{g}(t_1 - \tau ,\xi) \, \tau^{\beta-1} \, E_{\beta, \beta}(i^{-\beta}\tau^{\beta}|\xi |^{\alpha}) \, d\tau \\
& + \int_0^{t_2} \left[ \widehat{g}(t_1 - \tau ,\xi)- \widehat{g}(t_2 - \tau ,\xi)\right] \, \tau^{\beta-1} \, E_{\beta, \beta}(i^{-\beta}\tau^{\beta}|\xi |^{\alpha}) \, d\tau \Big |^2 \langle\xi\rangle^{2s}\,d\xi \\
 \lesssim & I + I\! I ,
\end{align*}
where 
\begin{align*}
I & = \int_{\RR} \Big |  \int_{t_2}^{t_1} \widehat{g}(t_1 - \tau ,\xi) \, \tau^{\beta-1} \, E_{\beta, \beta}(i^{-\beta}\tau^{\beta}|\xi |^{\alpha}) \, d\tau \Big |^2\langle\xi\rangle^{2s}\,d\xi ,\\
I\! I & = \int_{\RR} \Big | \int_0^{t_2} \left[ \widehat{g}(t_1 - \tau ,\xi)- \widehat{g}(t_2 - \tau ,\xi)\right] \, \tau^{\beta-1} \, E_{\beta, \beta}(i^{-\beta}\tau^{\beta}|\xi |^{\alpha}) \, d\tau \Big |^2 \langle\xi\rangle^{2s}\,d\xi .
\end{align*}
We first focus on $I$. By \cref{series}, the multiplier $\tau^{\beta-1} \, E_{\beta, \beta}(i^{-\beta}\tau^{\beta}|\xi |^{\alpha})$ can be controlled by those associated to $|\nabla |^{\sigma-\alpha} e^{-i\tau |\nabla |^{\sigma}}$, $\tilde{T}_{\tau}$, $\tilde{S}_{\tau}$ and $\tilde{U}_{\tau}$, and so we should consider each case separately. We treat one as an example. As $t_2\rightarrow t_1$,
\begin{align*}
 \int_{\RR} \Big |  \int_{t_2}^{t_1} \widehat{g}(t_1 - \tau ,\xi) \, & |\xi |^{\sigma -\alpha} \, e^{-i\tau |\xi |^{\sigma}} \, d\tau \Big |^2\langle\xi\rangle^{2s}\,d\xi \\
 & \leq \int_{\RR} \left(  \int_{t_2}^{t_1} |\widehat{g}(t_1 - \tau ,\xi)|\, d\tau \right)^2 \langle\xi\rangle^{2(s+\sigma-\alpha )}\,d\xi\\
& \leq \norm{g(t_1 - \cdot )}_{L^2 ([t_2 ,t_1], H^{s+\sigma-\alpha}_x)}^2 \, |t_1-t_2|\\
&\leq \norm{\langle\nabla\rangle^{s+\sigma-\alpha} g}_{L^2_{T,x}}^2 \, |t_1 - t_2| \rightarrow 0 
\end{align*}
 where we used the Cauchy-Schwartz inequality and the fact that $\norm{\langle\nabla\rangle^{s+\sigma-\alpha} g}_{L^2_{T,x}}$ is finite as long as $v\in X_T$, which was shown in \cref{usefulforcont}. The other cases are treated analogously.

Now we focus on $I\! I$. By \cref{series}, one can control this quantity by the multipliers associated to $|\nabla |^{\sigma-\alpha} e^{-i\tau |\nabla |^{\sigma}}$, $\tilde{T}_{\tau}$, $\tilde{S}_{\tau}$ and $\tilde{U}_{\tau}$. Once again, we treat one case as an example:
\begin{align*}
& \int_{\RR} \Big | \int_0^{t_2} \left[ \widehat{g}(t_1 - \tau ,\xi)- \widehat{g}(t_2 - \tau ,\xi)\right] \, |\xi |^{\sigma-\alpha} \, e^{-i\tau |\xi |^{\sigma}} \, d\tau \Big |^2 \langle\xi\rangle^{2s}\,d\xi  \\
& \leq \int_{\RR} \left( \int_0^{t_2} \Big | \widehat{g}(t_1 - \tau ,\xi)- \widehat{g}(t_2 - \tau ,\xi)\Big | \, d\tau \right)^2 \langle\xi\rangle^{2(s+\sigma-\alpha)}\,d\xi \\
& \leq t_2 \, \int_{\RR}  \int_0^{t_2} | \widehat{g}(t_1 - \tau ,\xi)- \widehat{g}(t_2 - \tau ,\xi) |^2 \, \langle\xi\rangle^{2(s+\sigma-\alpha)}\, d\tau \,  d\xi \\
& =t_2 \, \int_{\RR}  \int_0^T \chi_{\{0\leq \tau \leq t_2\}}\, | \widehat{g}(t_1-t_2 + \tau ,\xi)- \widehat{g}(\tau ,\xi) |^2 \, \langle\xi\rangle^{2(s+\sigma-\alpha)}\, d\tau \,  d\xi .
\end{align*}
Now we take $t_2\rightarrow t_1$ and use the dominated convergence theorem, since 
\begin{multline*}
\langle\xi\rangle^{2(s+\sigma-\alpha)} \, \int_0^T \chi_{\{0\leq \tau \leq t_2\}}\, | \widehat{g}(t_1-t_2 + \tau ,\xi)- \widehat{g}(\tau ,\xi) |^2 \, d\tau \\
\leq 2 \langle\xi\rangle^{2(s+\sigma-\alpha)} \, \int_0^T |\widehat{g}(\tau,\xi)|^2 \, d\tau \in L^1_{\xi}
\end{multline*}
independently of $t_2$, as was shown in \cref{usefulforcont}. Of course, we also need to show that 
\[\int_0^T \chi_{\{0\leq \tau \leq t_2\}}\, | \widehat{g}(t_1-t_2 + \tau ,\xi)- \widehat{g}(\tau ,\xi) |^2 \, d\tau\]
 is continuous on $t_2$. But this follows from the translation continuity of $L^p$ norms, together with the fact that $\widehat{g}(\cdot ,\xi) \in L^2([0,T])$ for a.e. $\xi$.
\end{proof}

\section*{Acknowledgments}
The author would like to thank his advisor, Gigliola Staffilani, for all her advice and encouragement. He would also like to thank Yannick Sire, David Jerison and Ethan Jaffe for several useful discussions and suggestions, as well as the referees for their useful comments.
This work was supported by NSF grants DMS-1500771, DMS-1462401, DMS-1764403 and DMS-1362509.

\bibliographystyle{amsplain}
\bibliography{references}
\end{document}